\newtheorem{theorem}{Theorem}[section]
\newtheorem{lemma}[theorem]{Lemma}
\newtheorem{prop}[theorem]{Proposition}
\newtheorem{definition}[theorem]{Definition}
\newtheorem{remark}[theorem]{Remark}
\newtheorem{conjecture}{Conjecture}
\newenvironment{customhypo}[1]
{\innercustomhypo}
{\endinnercustomhypo}
\DeclareMathOperator{\Hom}{Hom}
\DeclareMathOperator{\Gal}{Gal}
\DeclareMathOperator{\GL}{GL}
\DeclareMathOperator{\SL}{SL}
\DeclareMathOperator{\Sel}{Sel}
\DeclareMathOperator{\Aut}{Aut}
\DeclareMathOperator{\Frob}{Frob}
\newcommand{\ZZ}{\mathbb Z}
\newcommand{\CC}{\mathbb C}
\newcommand{\Zp}{\mathbb{Z}_p}
\newcommand{\Qp}{\mathbb{Q}_p}
\newcommand{\NN}{\mathbb N}
\newcommand{\EC}{\mathsf E}
\newcommand{\cK}{\mathcal K}
\newcommand{\cO}{\mathcal O}
\newcommand{\p}{\mathfrak p}
\newcommand{\Q}{{\mathbb{Q}}}
\newcommand{\F}{{\mathbb{F}}}
\newcommand{\Afc}{A_{f\chi}}
\newcommand{\tx}[1]{\text{#1}}
\title{$2$-Selmer Companion modular forms}
\author[Abhishek]{Abhishek }
\author[S.~Jha]{Somnath Jha }
\author[S.~Shekhar]{Sudhanshu Shekhar }
\address[]{Department of Mathematics and Statistics\\ IIT Kanpur\\ India, 208016}
\email{abhi.math04@gmail.com, jhasom@iitk.ac.in, sudhansh@iitk.ac.in, }
\keywords{Congruence of modular forms at $p=2$, $2$-Selmer groups, $2$-adic Galois representation.}
\subjclass[2020]{Primary 11F33, 11R34, 11S25}
\begin{document}

	\begin{abstract}
    Let $N$ be a positive integer and $K$ be a number field. 
	Suppose that $f_1, f_2\in S_k(\Gamma_0(N))$ are two newforms such that their residual {Galois representations} at $2$ are isomorphic. Let $\omega_2:G_\Q\rightarrow \ZZ_2^\ast$ be the $2$-adic cyclotomic character. Then, under  suitable hypotheses, we have shown that for every quadratic character $\chi$ of $K$ and each critical twist $j$,  the residual  Greenberg $2$-Selmer groups of $f_1\chi\omega_2^{-j}$ and $f_2\chi\omega_2^{-j}$ over $K$ are isomorphic. This generalizes the corresponding result of Mazur-Rubin on $2$-Selmer companion elliptic curves. Conversely,  
     if the difference of the residual  Greenberg (respectively Bloch-Kato) $2$-Selmer ranks of $f_1\chi$ and $f_2\chi$ is bounded independent of every quadratic character $\chi$ of $K$, then under suitable hypotheses we have shown that the residual Galois representations at $2$ of $f_1$ and $f_2$ are isomorphic as $G_K$-modules. The corresponding result for elliptic curves was a conjecture of Mazur-Rubin, which was proved by M. Yu.

   		    \end{abstract}
	\maketitle
	\section*{Introduction}\label{Intro}
	The $n$-Selmer group of an elliptic curve captures important arithmetic properties of the elliptic curve and there has been a considerable interest in understanding the $n$-Selmer groups of elliptic curves over the years. In particular, Mazur-Rubin have studied the rank distribution of $n$-Selmer groups of an elliptic curve in a family of quadratic twists of the given elliptic curve. Let $p$ be a  prime and $\EC_1, \EC_2$ be two elliptic curves defined over a number field $K$.    In \cite{mr}, Mazur-Rubin  have the interesting formulation that the congruence between two elliptic curves $\EC_1, \EC_2$ at  $p$ over $K$ can be understood by comparing the $p$-Selmer groups of the quadratic twists $\EC_1^\chi$ and $\EC_2^\chi$,   for all possible quadratic characters $\chi$ of $K$.

	 Let $\Sel_p(\EC/K)$ denote the classical  $p$-Selmer group of an elliptic curve $\EC$ over $K$. We recall the notion of $p$-Selmer companion curves following \cite{mr}. 
	\begin{definition}\cite[Definition 1.2]{mr}
		Two elliptic curves $\EC_1$ and $\EC_2$ are said to be $p$-Selmer companion curves over a number field $K$, if for every quadratic character $\chi$ of $K$, there is a group isomorphism between the  $p$-Selmer groups of $\EC_1^\chi$ and $\EC_2^\chi$, i.e. 
		$\Sel_p(\EC_1^\chi/K)\cong\Sel_p(\EC^\chi_2/K).$    
	\end{definition}
	
	In \cite[Theorem 3.1]{mr}, authors provide a set of sufficient conditions for $\EC_1$ and $\EC_2$ to be $p$-Selmer companion curves for every prime $p$. The case $p=2$ is of particular interest to us and can be stated as follows:
	\begin{theorem}\cite{mr}
		Let $\EC_1$ and $\EC_2$ be two elliptic curves defined over $K$ and suppose that $\EC_1, \EC_2$ have potentially multiplicative reduction at every prime $\p$ above $2$. For $i=1,2$, let $S_i$ be the set of primes of $K$ where $\EC_i$ has potentially multiplicative reduction. Assume that there is a $G_K$-isomorphism  $\phi:\EC_1[4]\rightarrow\EC_2[4]$ and $S_1=S_2$. Moreover, suppose that for every $\ell\in S_1=S_2$, the isomorphism $\phi$ takes $\mathcal{C}_{\EC_1/K_\ell}[4]$ to $\mathcal{C}_{\EC_2/K_\ell}[4]$, where $\mathcal{C}_{\EC_i/K_\ell}[4]$ is as defined in \cite[Definition 2.3]{mr}.  
		Then $\EC_1$ and $\EC_2$ are $2$-Selmer companions over every number field containing $K$. 
	\end{theorem}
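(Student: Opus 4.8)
The plan is to reduce the statement to a local-global comparison of the Greenberg–Selmer groups of $\EC_1$ and $\EC_2$ and their quadratic twists, and then to verify the local conditions prime by prime using the hypothesis on $\phi$.

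First I would recall that for an elliptic curve $\EC$ with potentially multiplicative reduction at all primes above $2$, the mod-$4$ Galois module $\EC[4]$ determines $\EC[2]$, and that the quadratic twist $\EC^\chi[4]$ is isomorphic to $\EC[4]\otimes\chi$ as a $G_K$-module. Hence the $G_K$-isomorphism $\phi:\EC_1[4]\to\EC_2[4]$ induces, for every quadratic character $\chi$, a $G_K$-isomorphism $\phi_\chi:\EC_1^\chi[4]\to\EC_2^\chi[4]$ compatible with the Weil pairing up to the fixed scalar coming from $\phi$. The $2$-Selmer group $\Sel_2(\EC^\chi/K)$ sits inside $H^1(G_K,\EC^\chi[2])$ cut out by local conditions $\prod_v \mathcal{L}_v\subset \prod_v H^1(K_v,\EC^\chi[2])$, where $\mathcal{L}_v$ is the image of the local Kummer map (equivalently the ``finite part'' in the sense of Bloch–Kato / Greenberg). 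So it suffices to show that $\phi_\chi$ carries $\mathcal{L}_{v,\EC_1^\chi}$ onto $\mathcal{L}_{v,\EC_2^\chi}$ for every place $v$ of $K$, since then $\phi_\chi$ induces an isomorphism on the global Selmer groups by the definition as an intersection in the global cohomology group with the product of local subspaces (or by Poitou–Tate duality for the cokernel bookkeeping).

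Next I would do the place-by-place analysis. At a place $v\nmid 2\infty$ of good reduction for both curves, the local condition is the unramified subspace $H^1_{\mathrm{ur}}(K_v,-)$, and $\phi_\chi$ being $G_{K_v}$-equivariant automatically respects it; the same holds at the archimedean places since $2$ is the prime of interest and $H^1$ of a real or complex place with $2$-torsion coefficients is handled by the general recipe (for $2$-torsion coefficients over $\R$ one must be a little careful, but the hypothesis that $\phi$ is defined over $K$ — not just mod $2$ — is exactly what is needed here, and this is where the ``$[4]$'' rather than ``$[2]$'' enters). At a bad place $\ell\in S_1=S_2$ of potentially multiplicative reduction, the local condition is not simply the unramified one, and this is where the subgroup $\mathcal{C}_{\EC_i/K_\ell}[4]\subset \EC_i[4]$ from \cite[Definition 2.3]{mr} enters: the hypothesis that $\phi$ takes $\mathcal{C}_{\EC_1/K_\ell}[4]$ to $\mathcal{C}_{\EC_2/K_\ell}[4]$ is designed precisely so that $\phi_\chi$ carries the local Kummer image for $\EC_1^\chi$ to that for $\EC_2^\chi$ — one checks this by expressing $\mathcal{L}_{\ell}$ in terms of the image of $\mathcal{C}[4]$ under the connecting map and the Tate parametrisation, which only sees the mod-$4$ structure and the choice of ``Tate curve'' filtration $\mathcal{C}[4]$. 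Finally, at the remaining primes above $2$ one again uses potentially multiplicative reduction to invoke the Tate uniformisation and reduce the local condition to mod-$4$ data; since $\EC_1$ and $\EC_2$ both have potentially multiplicative reduction above $2$, $S_1=S_2$ already includes these primes (or they are handled by the same $\mathcal{C}[4]$ mechanism).

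**The main obstacle** I anticipate is the careful verification at the primes above $2$ and at $\infty$: $2$-torsion coefficients make the local Tate duality and the comparison of local conditions delicate (the local conditions need not be their own orthogonal complements, and the ``finite = unramified'' heuristic can fail), and potentially multiplicative reduction at $2$ forces one to work with the $2$-adic Tate curve where wild ramification intervenes. The resolution is exactly the use of $\EC[4]$ and of the subgroups $\mathcal{C}_{\EC_i/K_\ell}[4]$ from \cite{mr}: the content of \cite[Definition 2.3]{mr} and its surrounding lemmas is that this mod-$4$ datum, together with the equality $S_1=S_2$, pins down all the local Selmer conditions simultaneously for every quadratic twist. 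Once the local matching is in hand, the global isomorphism $\Sel_2(\EC_1^\chi/K)\cong\Sel_2(\EC_2^\chi/K)$ follows formally, and since $\chi$ was arbitrary this says $\EC_1,\EC_2$ are $2$-Selmer companions; base-changing $\phi$ and the local data to any number field containing $K$ (which preserves potentially multiplicative reduction and the subgroups $\mathcal{C}[4]$) gives the statement over every such field.
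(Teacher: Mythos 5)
This statement is quoted from Mazur--Rubin \cite[Theorem 3.1]{mr} (the $p=2$ case) and is \emph{not proved in the paper at hand}; the paper cites it as motivation and then generalizes it to $p$-ordinary newforms in Theorem~\ref{mainthm} with an adapted argument. So there is no in-paper proof to compare against directly. That said, your reconstruction tracks both Mazur--Rubin's own strategy and the analogous strategy the paper uses for Theorem~\ref{mainthm}: realize $\Sel_2(\EC_i^\chi/K)$ as the kernel of a localization map against local conditions $\mathcal{L}_v$, observe that the $G_K$-isomorphism $\phi_\chi$ identifies the ambient cohomology groups, and then verify place by place that $\phi_\chi$ carries $\mathcal{L}_v(\EC_1^\chi)$ onto $\mathcal{L}_v(\EC_2^\chi)$. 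You correctly isolate the three non-automatic kinds of places (bad $\ell\in S_1=S_2$, primes above $2$, real places) and correctly explain that the ``$[4]$ rather than $[2]$'' hypothesis together with the matching of the $\mathcal{C}_{\EC_i/K_\ell}[4]$'s is exactly what pins the local conditions down there.

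Two cautions. First, the theorem is about the \emph{classical} $2$-Selmer group of an elliptic curve, i.e.\ the local conditions are everywhere $\kappa_v\big(\EC(K_v)/2\EC(K_v)\big)$ (the Bloch--Kato condition); your opening sentence mislabels the object as a ``Greenberg--Selmer group,'' although your subsequent analysis uses the right conditions. The distinction is not cosmetic in this paper: for general $p$-ordinary modular forms the authors can only carry out the comparison for the Greenberg condition, since the Bloch--Kato condition at $\p\mid 2$ is hard to control in residual terms (see Remark~\ref{rmk0.4}), whereas for elliptic curves with potentially multiplicative reduction at $2$ the Tate parametrization makes the classical/Bloch--Kato condition accessible, which is why Mazur--Rubin can state their theorem as they do. Second, the genuinely non-formal step --- the one you cannot reconstruct blind, and which your sketch asserts rather than proves --- is the lemma in \cite{mr} expressing the local Kummer image at a place of potentially multiplicative reduction in closed form as the preimage of the unramified subspace of $H^1\big(K_\ell,\,\EC[4]/\mathcal{C}_{\EC/K_\ell}[4]\big)$, a description that visibly depends only on the pair $(\EC[4],\mathcal{C}_{\EC/K_\ell}[4])$. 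Everything else in your outline is bookkeeping; that lemma is the engine, and its modular-form analogue in this paper is Lemma~\ref{Lemma 2}.
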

	
	In \cite{JSM19}, \cite[Theorem 3.1]{mr} was extended to modular forms for an odd prime. If a modular form is $p$-ordinary, then they considered  $p^r$-Greenberg Selmer group (see \cite[Theorem 3.1]{JSM19}). In the general case (including non-ordinary at $p$), they considered $p^r$-Bloch-Kato Selmer groups (see \cite[Theorem 4.10]{JSM19}). However, the case $p=2$ was left out in \cite{JSM19}. In the first part of this article for $p=2$, we extended \cite[Theorem 3.1]{mr} for $p$-ordinary modular forms.  
	
	Let $p=2$ and $f_1, f_2\in S_{k}(\Gamma_1(Np^t))$ be two weight $k$, normalized cuspidal Hecke eigenforms which are  newforms of level $Np^t$. Let $\cK$ be the number field generated by the Fourier coefficients of $f_1$ and $f_2$. {Let $\pi\mid p$ be a prime of $\cK$ induced by the embedding $i_p:\bar{\Q}\hookrightarrow \bar{\Q}_p$ and $\cK_\pi$ denote the completion of $\cK$ at $\pi$. We fix an uniformizer, again denoted by $\pi$,  of the ring of integers $\cO=\cO_{\cK_\pi}$ of the completion of $\cK_\pi$. (By a slight  abuse of notation,  $\pi$ is used here for both the prime and the uniformizer, with the distinction being clear from the context.)}  We assume that $f_1, f_2$ are $p$-ordinary i.e $i_p(a_p(f_1)) \text{ and } i_p(a_p(f_2))$ are $p$-adic units. For an $\cO$-module $M$ and $a\in \cO$, set $M[a]:=\{m\in M:a m=0\}$. For $f\in\{f_1, f_2\}$, let $\rho_f:G_\Q\rightarrow \Aut(V_f)$ be the Galois representation associated to $f$, where $V_f$ is a $2$-dimensional $\cK_\pi$-vector space. Fix a $G_\Q$-stable lattice $T_f$ of $V_f$ and set $A_f:=\frac{V_f}{T_f}$. We denote by $\bar{\rho}_f$, the associated residual Galois representation at $p$. Let $K$ be a number field and $\p\mid p$ be a prime of $K$. Let $S_{Gr}(A_{f}[\pi]/K)$ denote the Greenberg $\pi$-Selmer group of $f$ over $K$ (definition \ref{pi.sel.gp}). The first main result of the article is theorem \ref{mainthm}. Let $\omega_p:G_\Q\rightarrow \ZZ_p^\ast$ be the $p$-adic cyclotomic character. Some of the notation and hypotheses used in theorem \ref{mainthm} are defined in \S\ref{pf of main thm}. 
	\begin{theorem}\label{mainthm}
		Let $p=2$, $N$ be an odd  square-free positive integer, $k$ be an even positive integer and $K$ be a number field. Let $f_1,\, f_2\in S_k(\Gamma_0(Np^t))$ be two $p$-ordinary normalized cuspidal Hecke eigenforms which are  newforms of level $Np^t$ with $t\ge 0$. Assume that hypothesis $\mathrm{\ref{I}}$ holds i.e. $\cK_\pi/\Q_2$ is an unramified extension. Further assume for every prime $v\mid N$,  hypothesis $\mathrm{\ref{C2}}$ holds for both $f_1$ and $f_2$ i.e. $cond_v(\rho_{f_i})=cond_v(\bar{\rho}_{f_i})$, for $i=1, 2$. Also suppose  either ($i$) or $(ii)$ holds:
		\begin{enumerate} 
			\item[$(i)$] Both the conditions, stated below, hold:
            \begin{enumerate}
            \item We have a $G_K$-isomorphism $\phi:A_{f_1}[\pi^2]\rightarrow A_{f_2}[\pi^2]$.
                 \item $\omega_{p|{G_\p}}$ is non-trivial $\mod{\pi^2}$, i.e. $\omega_{p|_{G_\p}}\not\equiv 1\pmod{\pi^2}$, for every prime $\p\mid p$ of $K$.
            \end{enumerate}
            
			\item[$(ii)$] All  three conditions, stated below,  hold: 
			\begin{enumerate}
				\item 
				 We have a $G_K$-isomorphism $\phi:A_{f_1}[\pi]\rightarrow A_{f_2}[\pi]$.

				\item $K$ has no real place. 
				\item $\bar{\rho}_{f_1}$ is ramified at every prime $\p\mid p$ of $K$.  
			\end{enumerate} 
		\end{enumerate}
		Then for every quadratic character $\chi$ of $K$ and for every $j$ with $0\le j\le k-2$, we have an isomorphism of Greenberg $\pi$-Selmer groups 
		$S_{Gr}(A_{f_1\chi(-j)}[\pi]/K)\cong S_{Gr}(A_{f_2\chi(-j)}[\pi]/K).$
	\end{theorem}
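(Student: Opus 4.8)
The plan is to establish the isomorphism by comparing, place by place, the local conditions cutting out the two residual Greenberg $\pi$-Selmer groups. First I would observe that, as $\chi$ is quadratic and the residue field $\cO/\pi$ has characteristic $2$, the reduction $\bar\chi$ of $\chi$ is trivial; hence for $i=1,2$ the $G_K$-module $A_{f_i\chi(-j)}[\pi]$, together with its ordinary filtration at the primes above $2$ and all of its Greenberg local conditions, coincides with that of $A_{f_i(-j)}[\pi]$, and it suffices to prove $S_{Gr}(A_{f_1(-j)}[\pi]/K)\cong S_{Gr}(A_{f_2(-j)}[\pi]/K)$. Since $\bar\omega_p^{-j}$ is a character, the $G_K$-isomorphism $\phi$ --- reduced modulo $\pi$ in case $(i)$, where it is a priori only an isomorphism modulo $\pi^2$ --- induces a $G_K$-isomorphism $\Phi\colon A_{f_1(-j)}[\pi]\xrightarrow{\sim}A_{f_2(-j)}[\pi]$, and hence compatible isomorphisms on the global cohomology $H^1(G_K,\,\cdot\,)$ and on the local cohomology $H^1(G_{K_w},\,\cdot\,)$ at every place $w$ of $K$. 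As the Selmer group is $\ker\!\big(H^1(G_K,\,\cdot\,)\to\prod_w H^1(G_{K_w},\,\cdot\,)/\mathcal{L}_w\big)$, the proof reduces to checking that $\Phi$ carries the local condition $\mathcal{L}_w$ attached to $f_1(-j)$ onto the one attached to $f_2(-j)$, at each $w$.

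For $w\nmid Np\infty$ this is immediate: there $\mathcal{L}_w$ is the unramified subgroup $H^1_{\mathrm{ur}}(G_{K_w},\,\cdot\,)=H^1\!\big(G_{K_w}/I_w,(\,\cdot\,)^{I_w}\big)$, which depends only on the $G_{K_w}$-module structure and so is visibly $\Phi$-stable. For $w\mid N$, I would use that $N$ is odd and square-free and that $f_1,f_2$ have the common level $Np^t$: at such $w$ the representation $\rho_{f_i}$ is (up to an unramified twist) special with conductor exponent one, and hypothesis $\mathrm{\ref{C2}}$ (that $cond_w(\rho_{f_i})=cond_w(\bar\rho_{f_i})$) guarantees that this ramification survives reduction modulo $\pi$; consequently $\bar\rho_{f_i}$ restricted to the inertia at $w$ is a nontrivial unipotent, so the one-dimensional invariant line $\bar\rho_{f_i}^{I_w}$ --- and with it $\mathcal{L}_w$ --- is again determined by the $G_{K_w}$-module structure alone, hence matched by $\Phi$. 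At a real place $w$ of $K$ there is nothing to check in case $(ii)$ by hypothesis $(ii)(b)$; in case $(i)$ one uses instead that the $\pm1$-eigenspaces of complex conjugation on $A_{f_i}$ remain distinct modulo $\pi^2$ --- they both degenerate to $1$ only modulo $\pi$, a pathology of $p=2$ --- so that the modulo-$\pi^2$ isomorphism $\phi$ respects the eigenspace decomposition and therefore the local condition at $w$.

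The heart of the argument is the condition at a prime $\p\mid p=2$ of $K$. There $\mathcal{L}_\p$ is the image of $H^1(G_\p,F^+A_{f_i(-j)}[\pi])\to H^1(G_\p,A_{f_i(-j)}[\pi])$ for the ordinary Greenberg filtration $0\to F^+A_{f_i}\to A_{f_i}\to F^-A_{f_i}\to 0$, in which the inertia subgroup $I_\p\subseteq G_\p$ acts trivially on $F^-A_{f_i}$ and acts on $F^+A_{f_i}$ through $\omega_p^{k-1}$ restricted to $I_\p$. Matching the two conditions amounts to showing that $\phi$ sends $F^+A_{f_1}$ to $F^+A_{f_2}$ modulo $\pi$ (resp.\ modulo $\pi^2$), and this is exactly where $p=2$ bites: since $\cK_\pi/\Q_2$ is unramified by hypothesis $\mathrm{\ref{I}}$, one has $\bar\omega_p\equiv 1\pmod\pi$, so modulo $\pi$ both graded pieces are unramified and $F^+$ need not be recoverable from the $G_\p$-action. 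In case $(i)$, hypothesis $(i)(b)$ together with $\omega_p^{k-1}\equiv\omega_p\pmod{\pi^2}$ (valid since $k$ is even) makes the $G_\p$-actions on $F^+A_{f_i}[\pi^2]$ and $F^-A_{f_i}[\pi^2]$ distinct modulo $\pi^2$, so that $F^+A_{f_i}[\pi^2]$ is intrinsically characterised inside $A_{f_i}[\pi^2]$ as a $G_\p$-submodule; then $\phi(F^+A_{f_1}[\pi^2])=F^+A_{f_2}[\pi^2]$, and reduction modulo $\pi$ finishes this case. In case $(ii)$, both graded pieces are unramified modulo $\pi$, but by $(ii)(c)$ the restriction $\bar\rho_{f_1}|_{I_\p}$ is nontrivial; as its semisimplification is trivial, $\bar\rho_{f_1}|_{G_\p}$ is a nonsplit unipotent extension and hence $F^+A_{f_1}[\pi]=(A_{f_1}[\pi])^{I_\p}$; the same holds for $f_2$ because $\bar\rho_{f_2}|_{G_\p}\cong\bar\rho_{f_1}|_{G_\p}$, and since $\phi$ preserves $I_\p$-invariants it carries $F^+A_{f_1}[\pi]$ onto $F^+A_{f_2}[\pi]$.

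Once $\Phi$ is seen to match $\mathcal{L}_w$ at every place, it restricts to the sought isomorphism $S_{Gr}(A_{f_1(-j)}[\pi]/K)\cong S_{Gr}(A_{f_2(-j)}[\pi]/K)$, which by the first step gives the theorem. The main obstacle --- and the reason the two alternative hypotheses are both needed --- is precisely the reconstruction of the ordinary filtration at $\p\mid 2$ from the residual (resp.\ modulo $\pi^2$) Galois module; a further delicate point, again a $p=2$ phenomenon, is the analysis of the bad primes $v\mid N$ through the conductor hypothesis $\mathrm{\ref{C2}}$ and of the real places of $K$.
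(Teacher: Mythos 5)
Your place-by-place skeleton, and your identification of the key difficulty at $\p\mid 2$ (recovering the ordinary filtration from the mod-$\pi$, resp.\ mod-$\pi^2$, Galois module under hypotheses $(i)(b)$ or $(ii)(c)$), do track the paper's strategy (lemmas \ref{Lemma 4}, \ref{Lemma 3}, \ref{Lemma 5}, \ref{Lemma 4.6}). But your opening reduction is a fatal error. You claim that since $\bar\chi$ is trivial in residue characteristic $2$, the module $A_{f\chi(-j)}[\pi]$ \emph{together with all of its Greenberg local conditions} coincides with that of $A_{f(-j)}[\pi]$, so that one may forget $\chi$ entirely. The residual modules are indeed isomorphic, but the local conditions are not: by definition \eqref{pi.sel.gp}, $H^1_{Gr}(G_v, A_{f\chi(-j)}[\pi])$ is the preimage under $i_v$ of a condition imposed on the \emph{divisible} module $A_{f\chi(-j)}$, equivalently (lemma \ref{modfi.se.gp}) it is cut out by the Kummer image $Im(\kappa^{ur}_{f\chi(-j),v})\cong A_{f\chi(-j)}^{I_v}/\pi$, and this depends on $\chi$ itself, not on $\chi\bmod\pi$. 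Concretely, at a prime $v\nmid Np\infty$ where $\chi$ is ramified, lemma \ref{Lemma 2}$(ii)$ gives $Im(\kappa^{ur}_{f\chi,v})=H^1(H_{v,\chi},A_{f\chi}[\pi])$, a $2$-dimensional space, whereas for the untwisted form the image is $0$; so your claim that at such $v$ the condition "is the unramified subgroup, visibly $\Phi$-stable" is false, and the two Selmer groups $S_{Gr}(A_{f\chi}[\pi]/K)$ and $S_{Gr}(A_{f}[\pi]/K)$ are genuinely different. If your reduction were valid, every quadratic twist of a fixed form would have the same residual $2$-Selmer group, which contradicts proposition \ref{ecc sel com} / Yu's theorem and would make the entire subject of Selmer variation in twist families vacuous.

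The same conflation recurs throughout: you repeatedly assert that a local condition is "determined by the $G_{K_w}$-module structure alone" of the residual representation, when the actual content of the proof is to show that the Kummer images $Im(\kappa^{ur}_{f\chi(-j),v})$ --- which a priori depend on the integral lattice and on $\chi$ --- can be \emph{expressed} in terms of $A_{f\chi(-j)}[\pi]$, the subquotient filtrations, and the local data $H_{v,\chi}=I_v/I_v^\chi$ (which is the same for $f_1$ and $f_2$ since the twist is by the same $\chi$). This is exactly the case analysis of lemma \ref{Lemma 2} (using hypothesis \ref{C2} and lemma \ref{lemma 2.3} at $v\mid N$), lemma \ref{Lemma 3} at $\p\mid 2$ (where one must compare with $H^1(I_\p, A^-_{f\chi(-j)})$ for the full divisible quotient, via an auxiliary unramified isomorphism $\psi:A^-_{f_1}\to A^-_{f_2}$), and lemma \ref{Lemma 5} at real places. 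A correct write-up must keep $\chi$ in play at every place and carry out these computations; your argument, as written, assumes the conclusion of that work. (Two smaller points: your description of the condition at $\p\mid 2$ as the image of $H^1(G_\p,A^+[\pi])$ is not the definition \eqref{Gr.Loc.cond} used here, and your claim that $A^+_{f_i}[\pi^2]$ is intrinsically characterised inside $A_{f_i}[\pi^2]$ is stronger than what lemma \ref{Lemma 4} proves or needs --- only the image $\phi(A^+_{f_1}[\pi])\subseteq A^+_{f_2}[\pi]$ is established.)
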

\noindent Note that the condition $(i)(b)$ always holds for $K=\Q$. 
    
	\begin{remark}\label{rmk0.4} 
		\textnormal{A crucial assumption in \cite[Theorem 3.1]{mr}
		for $p=2$ case is that $\EC_1, \EC_2$ have potentially multiplicative reduction at every prime $\p\mid 2$. In particular when $p=2$,  good ordinary and (good) supersingular reduction at $p=2$ cases are not covered in \cite{mr}. For an odd prime $p$, \cite[Theorem 3.1]{mr} was generalised in \cite{JSM19} for the Greenberg $p$-Selmer groups of $p$-ordinary modular forms. Further it was extended in \cite{JSM19} to the Bloch-Kato $p$-Selmer groups by comparing the Greenberg and the Bloch-Kato $p$-Selmer groups. An important assumption used there for the comparison was $a_p(f)\ne \pm 1\pmod{p}$. In our case when $p=2$, for $p$-ordinary modular forms, we always land in the case $a_p(f)=1\pmod{p}$. There seems to be a scarcity of relevant literature for studying the residual Bloch-Kato $2$-Selmer groups of modular forms. In particular, for Bloch-Kato $2$-Selmer group of  modular forms, the  local condition $H^1_{BK}(G_\p, A_{f}[\pi])$(see definition \ref{BK.Loc.cond}) with $\p \mid 2$,  seems to be difficult to determine in terms of the residual Galois representation at $2$.   In \cite{JSM19} for an odd prime, in the non-ordinary case i.e. when $p\mid a_p(f)$, the Bloch-Kato $p$-Selmer groups were studied by comparing it with the signed Selmer groups of \cite{llzant}. Since the results in \cite{llzant} are proved for an odd prime, for $p=2$, it can not be applied to non-ordinary  at $p$ modular forms   to study the Bloch-Kato $p$-Selmer groups.}
	\end{remark}
    
\begin{remark}
\textnormal{Note that the $G_K$-isomorphism $\phi:A_{f_1}[\pi]\rightarrow A_{f_2}[\pi]$ and the Selmer groups $ S_{Gr}(A_{f_i}[\pi]/K)$ depend on the choices of Galois invariant lattices $T_{f_i}\subset V_{f_i}$. So naturally,  the definition of Selmer companion modular forms (definition \ref{pi.sel.gp}) and also the result in theorem \ref{mainthm} depend on the choices of Galois invariant lattices. If the residual Galois representation $\bar{\rho}_f$ of a newform $f$ is an absolutely irreducible $G_K$-module, then there is {a}
 unique $G_K$-invariant lattices $T_f$ (upto homothety), see \cite{epw}.}   
\end{remark}

	Given two congruent mod-$p$ elliptic curves or modular forms, the basic idea to compare their $p$-Selmer groups is to express each of the $p$-Selmer groups in terms of the respective residual Galois representation \cite{mr, JSM19}.    We follow a similar broad strategy to prove theorem \ref{mainthm} here. However the situation when $p=2$ is more delicate and we encounter various additional difficulties and the various steps in the proof of theorem \ref{mainthm} have different flavours compared to  \cite{JSM19}. As a consequence, our result for the even prime in this article, is obtained under more restrictive hypotheses.
	
	We make the simplifying  assumption that $\cK_\pi/\Q_2$ is unramified, which enables us to have $A_f[\pi]=A_f[2]$.  This is used throughout. To arrive at theorem $\ref{mainthm}$, we do a prime by prime comparison of the local factors appearing in the Greenberg $p$-Selmer group of $f_1$ and $f_2$ (definition \ref{pi.sel.gp}, \eqref{Gr.Loc.cond}). 
    
    For $\p\mid p$, we are unable to determine the local factor $H^1_{Gr}(G_\p, A_{f\chi(-j)}[\pi])$ purely in terms of the residual Galois representation $A_{f\chi(-j)}[\pi]$ (unlike in \cite{JSM19}). However, we circumvent this by showing that (lemma \ref{Lemma 3}) under the isomorphism $\tilde{\phi}:H^1(G_\p, A_{f_1\chi(-j)}[\pi])\rightarrow H^1(G_\p, A_{f_2\chi(-j)}[\pi])$ induced from the $G_K$-isomorphism $\phi:A_{f_1}[\pi]\rightarrow A_{f_2}[\pi]$, the local condition $H^1_{Gr}(G_\p, A_{f_1\chi(-j)}[\pi])$ maps onto $H^1_{Gr}(G_\p, A_{f_2\chi(-j)}[\pi])$.

	\begin{remark}
    \textnormal{In the proof of \cite{JSM19}, for a prime $\ell\ne p$, a crucial assumption was  the conductor hypothesis for $f$ i.e. $cond_\ell(\rho_f)=cond_\ell(\bar{\rho_f})$ (see hypothesis \ref{C2}). However, for $p=2$ and in the setting of our theorem, the hypothesis \ref{C2} may not hold if the nebentypus of $f$ is non-trivial. Hence we require that $f_1, f_2\in S_k(\Gamma_0(Np^t))$ in theorem \ref{mainthm}. 
    {For example, let $p=2$  and $N$ be an odd square-free integer. Let $f\in S_k(\Gamma_0(Np^t), \epsilon)$ be a newform of weight $k$ and nebentypus $\epsilon$, where $\epsilon$ is a quadratic character of conductor $C\mid N$. Assume that $\cK=\Q(\set{a_n(f)\colon n\in \NN})=\Q$.  Then for a rational prime $\ell\mid\mid C$, we have $\rho_f|_{G_\ell}\sim \begin{pmatrix}
				\epsilon & 0 \\ 
				0 & 1
			\end{pmatrix},$ see \cite[Theorem 3.26(3(a))]{Hida}. Therefore $cond_\ell(\rho_f)=\ell$ and since $\epsilon$ is a quadratic character, $\bar{\rho}_f|_{G_\ell}\sim \begin{pmatrix}
				1 & 0 \\ 
				0 & 1
			\end{pmatrix}$. Therefore $cond_\ell(\bar{\rho}_f)=1.$ Thus hypothesis \ref{C2} fails in this case.}   }
		
		\textnormal{Further, it was shown in \cite{JSM19} that the conductor hypothesis for $f$  implies the conductor hypothesis for $f\otimes\chi$, for every quadratic character $\chi$ of $K$. Although in the our case of even prime, the hypothesis \ref{C2} fails for $f\otimes\chi$ for some quadratic character $\chi$ of $K$, we get around this by using an entirely different (from \cite{JSM19}) argument.}   
	\end{remark}
	We prepare the proof of theorem \ref{mainthm} {with} several lemmas. Let $M$ denote the conductor of $\chi$ and define $\Sigma=\Sigma_\chi:=\{v \text{ prime in }K:v\mid NM2\infty\}$. In lemma \ref{modfi.se.gp}, we give an alternative definition of $S_{Gr}(A_{f\chi(-j)}[\pi]/K)$  by replacing $\frac{H^1(G_v, A_{f\chi(-j)}[\pi])}{H^1_{Gr}(G_v, A_{f\chi(-j)}[\pi])}$ with  $\frac{H^1(I_v, A_{f\chi(-j)}[\pi])}{Im(\kappa_{f\chi(-j), v}^{ur})}$ for $v\nmid p, v\in \Sigma$, where $\kappa_{f\chi(-j), v}^{ur}:A_{f\chi(-j)}^{I_v}/\pi \rightarrow H^1(I_v, A_{f\chi(-j)}[\pi])$ denotes the `Kummer map'  \eqref{Sec 3.1}.  A similar expression is also obtained for $\p\mid 2$ by replacing $A_f$ with $A_f^-$, where $A_f^-$ is the quotient of $A_f$ defined using the $p$-ordinary filtration of $A_f$ \eqref{fil. of Af}. In lemma \ref{Lemma 2}, we show that for $v\mid N$, $Im(\kappa_{f\chi(-j), v}^{ur})$  is determined by $A_{f\chi(-j)}[\pi]$. The corresponding results for $\p\mid 2$ case have been discussed in lemmas \ref{Lemma 4}, \ref{Lemma 3}. 
    
    Note that when $p=2$, in the definition of the Selmer group, we need to consider the contribution of the local Galois cohomology groups at infinite places (unlike in \cite{JSM19}). In lemmas \ref{Lemma 5}, \ref{Lemma 4.6} for $v\mid \infty$, we show that  $Im(\kappa_{f\chi(-j), v})$ only depends on $A_{f\chi(-j)}[\pi]$.  Combining all these results, the proof  of theorem \ref{mainthm} is completed in \S \ref{pf of main thm}. We illustrate theorem \ref{mainthm} through a numerical example in \ref{examp.}.       
	
	\medskip
	
Mazur-Rubin \cite{mr} have also introduced a notion of Selmer near-companion curves as follows:
\begin{definition}\cite[Definition 7.12]{mr}
	Let $p$ be a prime and $K$ be a number field.	Two elliptic curves $\EC_1$ and $\EC_2$ are said to be $p$-Selmer near-companion curves over $K$, if there exist a constant $C=C(\EC_1, \EC_2, K)>0$ such that, for every quadratic character $\chi$ of $K$,{\small \begin{center}
	    $\abs{\dim_{\F_p}\Sel_p(\EC_1^\chi/K)-\dim_{\F_p}\Sel_p(\EC_2^\chi/K)}\le C.$\end{center}}
	\end{definition}

An analogue of  \cite[Theorem 3.1]{mr} for Selmer near-companion curves  for $p=2$ is the following: 
	\begin{theorem}\cite[Theorem 7.13]{mr}\label{0.4}
		Let $\EC_1$ and $\EC_1$ be elliptic curve over $K$. Suppose that there is a $G_K$-isomorphism $\EC_1[4]\cong \EC_2[4]$. Then $\EC_1$ and $\EC_2$ are $2$-Selmer near-companion curves over $K$.   
	\end{theorem}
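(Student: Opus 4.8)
The plan is to realize all the relevant Selmer groups as subgroups of one $H^1$ and to compare the cutting-out local conditions place by place. Reducing $\phi$ modulo $2$ and twisting by a quadratic character $\chi$ of $K$ produces a $G_K$-isomorphism $\phi_\chi\colon \EC_1^\chi[2]\xrightarrow{\sim}\EC_2^\chi[2]$, namely the reduction mod $2$ of the isomorphism $\EC_1^\chi[4]=\EC_1[4]\otimes\chi\xrightarrow{\sim}\EC_2[4]\otimes\chi=\EC_2^\chi[4]$ induced by $\phi$. Since the twisting cocycle of $\EC_i^\chi$ takes values in $\{\pm1\}$ and $-1$ acts trivially on $2$-torsion, there is also a $G_K$-isomorphism $\EC_i^\chi[2]\cong\EC_i[2]$; in particular the $G_v$-module $\EC_i^\chi[2]$, hence $\dim_{\F_2}H^1(G_v,\EC_i^\chi[2])$, does not depend on $\chi$. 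Using $\phi_\chi$ to identify the ambient cohomology, $\Sel_2(\EC_i^\chi/K)$ becomes the subgroup of $H^1(K,\EC_1^\chi[2])$ defined by the local conditions $L_{i,v}^\chi:=\mathrm{Im}\big(\EC_i^\chi(K_v)/2\EC_i^\chi(K_v)\to H^1(G_v,\EC_i^\chi[2])\big)$, and all $L_{i,v}^\chi$ coincide with the unramified subgroup for all but finitely many $v$.

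Next I would use the elementary fact that, for two families of local conditions on a finite Galois module agreeing outside finitely many places, localization modulo the intersections embeds the quotient of the ``sum'' Selmer group by the ``intersection'' Selmer group into $\bigoplus_v (L_{1,v}^\chi+L_{2,v}^\chi)/(L_{1,v}^\chi\cap L_{2,v}^\chi)$; combined with the monotonicity of Selmer groups in the local conditions this yields
\[
\big|\dim_{\F_2}\Sel_2(\EC_1^\chi/K)-\dim_{\F_2}\Sel_2(\EC_2^\chi/K)\big|\ \le\ \sum_v\dim_{\F_2}\frac{L_{1,v}^\chi+L_{2,v}^\chi}{L_{1,v}^\chi\cap L_{2,v}^\chi},
\]
a finite sum each of whose terms is at most $\dim_{\F_2}H^1(G_v,\EC_i^\chi[2])$ and is $0$ whenever $L_{1,v}^\chi=L_{2,v}^\chi$. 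For $v\mid 2\infty$ and for the finitely many $v\nmid 2\infty$ at which $\EC_1$ or $\EC_2$ has bad reduction, I bound the $v$-term by $\dim_{\F_2}H^1(G_v,\EC_i^\chi[2])$, which as noted is independent of $\chi$ and, by the local Euler characteristic formula, finite (at most $\dim_{\F_2}(\EC_i^\chi[2])^{G_v}+\dim_{\F_2}H^2(G_v,\EC_i^\chi[2])+2[K_v:\Q_2]$ for $v\mid 2$, at most $2$ for $v\mid\infty$); their total is a constant $C_0=C_0(\EC_1,\EC_2,K)$.

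The crux is to show $L_{1,v}^\chi=L_{2,v}^\chi$ at every $v\nmid 2\infty$ where both $\EC_1$ and $\EC_2$ have good reduction, for this is the class of places that contains every prime dividing $\mathrm{cond}(\chi)$ outside the fixed finite set above, and there one can afford no error. When $\chi$ is unramified at $v$, $\EC_i^\chi$ has good reduction and $L_{i,v}^\chi$ is the unramified subgroup $H^1_{\mathrm{ur}}(G_v,\EC_i^\chi[2])$, which $\phi_\chi$ (a $G_v$-isomorphism) carries onto $H^1_{\mathrm{ur}}(G_v,\EC_2^\chi[2])$. When $\chi$ is ramified at $v$ (so $v\nmid 2$), $\EC_i^\chi$ has additive, potentially good reduction, and here I invoke the local analysis of Mazur--Rubin \cite[\S2]{mr}: $L_{i,v}^\chi$ is determined by the $G_v$-module $\EC_i^\chi[4]$ together with the subgroup $\mathcal C_{\EC_i^\chi/K_v}[4]$ of $4$-torsion points meeting the identity component of the special fibre of the N\'eron model of $\EC_i^\chi$. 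Since that identity component is $\mathbb G_a$, which has no nontrivial $4$-torsion in residue characteristic $\ne 2$, one gets $\mathcal C_{\EC_i^\chi/K_v}[4]=0$, so $L_{i,v}^\chi$ depends only on the $G_v$-module $\EC_i^\chi[4]$. As $\phi$ induces a $G_v$-isomorphism $\EC_1^\chi[4]\xrightarrow{\sim}\EC_2^\chi[4]$ lifting $\phi_\chi$, it carries $L_{1,v}^\chi$ onto $L_{2,v}^\chi$, as required.

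Putting the pieces together, the right-hand side of the displayed inequality is $\le C_0$ for every quadratic character $\chi$ of $K$, which is exactly the assertion that $\EC_1$ and $\EC_2$ are $2$-Selmer near-companion curves over $K$. The main obstacle is the ramified-twist case of the crux: one must verify that the image of the local Kummer map for $\EC_i^\chi$ at a good-reduction prime $v$ of the two curves is determined by the $G_v$-module $\EC_i^\chi[4]$ alone, with no residual dependence on the curve — and it is for this reason that the hypothesis is imposed on the $4$-torsion rather than merely the $2$-torsion; this local input is supplied by the work of Mazur--Rubin.
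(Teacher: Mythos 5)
The paper does not prove this theorem; it is quoted verbatim from Mazur--Rubin (and invoked later only through the Greenberg analogue, Proposition \ref{mainthm gr nr com.}). So there is no in-paper proof to compare against; your reconstruction must be judged on its own terms and against what can be gleaned from the paper's treatment of the cognate results.

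Your outline is essentially the right (and presumably Mazur--Rubin's) argument, and I did not find a fatal gap. The reduction to a place-by-place comparison via the ``sum''/``intersection'' Selmer groups, the observation that $\EC_i^\chi[2]=\EC_i[2]$ as $G_K$-modules so all ambient local $H^1$'s are $\chi$-independent, and the uniform bound at the fixed finite set $\{v: v\mid 2\infty\text{ or } \EC_1\EC_2 \text{ bad at }v\}$ are all correct and match the bookkeeping the paper performs for the Greenberg version (compare the definition \eqref{omega f}, the sequence \eqref{exctseq1}, and the bound $C^\prime$ in the proof of Theorem \ref{Gr nr com. converse}). The genuine content is, as you flag, the crux: showing $L_{1,v}^\chi=L_{2,v}^\chi$ at every good prime $v\nmid 2\infty$, including the ones where $\chi$ ramifies. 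Your mechanism --- that at such a $v$ the twist has potentially good additive reduction with $\mathcal C_{\EC_i^\chi/K_v}[4]=0$, so the local condition is determined by the $G_v$-module $\EC_i^\chi[4]$ alone --- is sound. It can be checked directly: for $v\nmid 2$ of potentially good additive reduction the geometric component group has $2$-part of exponent $2$, hence $\EC^\chi(K_v)/2\EC^\chi(K_v)$ is generated by classes of points in $\EC^\chi[4]^{G_v}$, which forces $\mathrm{Im}(\kappa_v)=\ker\bigl(H^1(G_v,\EC^\chi[2])\to H^1(G_v,\EC^\chi[4])\bigr)$, an object visibly determined by the $G_v$-module $\EC^\chi[4]$. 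This is exactly why the hypothesis must be at the $[4]$-level and not the $[2]$-level, and you correctly identify that as the reason.

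It is instructive to contrast your route with the paper's Proposition \ref{mainthm gr nr com.}, which is the Greenberg-Selmer analogue and requires only a $G_K$-isomorphism on $\pi$-torsion (not $\pi^2$). The discrepancy is not cosmetic: the Greenberg local condition at $v\nmid p$ lives in $H^1(I_v,\cdot)$ and, by Lemma \ref{Lemma 2}(ii), at a good-reduction prime with $\chi$ ramified it equals $H^1(H_{v,\chi},A_{f\chi}[\pi])$, a purely mod-$\pi$ invariant; whereas the classical/Bloch--Kato condition you are computing is the Kummer image in $H^1(G_v,\cdot)$, and, as your crux shows, this genuinely requires the mod-$4$ structure. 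This is also consistent with the paper's Lemma \ref{Lemma 7}(iii), whose proof of the Bloch--Kato local condition at a $\chi$-ramified prime explicitly uses $A_{f_1}[\pi^2]$, not just $A_{f_1}[\pi]$. The one place I would tighten your write-up: you attribute the ``$L_v$ depends only on $\EC^\chi[4]$ and $\mathcal C[4]$'' principle to Mazur--Rubin's \S2, but as this paper quotes Mazur--Rubin's Theorem 3.1, the $\mathcal C[4]$-condition there is stated at potentially \emph{multiplicative} primes; applying it at potentially good additive primes is a (correct, but distinct) extension, and it would be cleaner to argue it directly as above rather than by citation.
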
 
    We have a generalization of theorem \ref{0.4} for modular forms in proposition \ref{mainthm gr nr com.}. 
Further, the following conjecture was proposed in \cite{mr} as a converse of theorem \ref{0.4}.  
	\begin{conjecture}\cite[Conjecture 7.15]{mr}\label{conj 1}
		If $n\ge 2$ and $\EC_1, \EC_2$ are $n$-Selmer near-companion curves over $K$, then $\EC_1[n]$ is $G_K$-isomorphic to $\EC_2[n]$. 
	\end{conjecture}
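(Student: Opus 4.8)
The plan is to argue by contradiction: suppose $\EC_1, \EC_2$ are $n$-Selmer near-companions but $\EC_1[n]$ is not $G_K$-isomorphic to $\EC_2[n]$. Using the $G_K$-module decomposition $\EC[n]=\bigoplus_{p^e\| n}\EC[p^e]$ and the induced product decomposition of $\Sel_n(\EC^\chi/K)$, the near-companion hypothesis passes to each prime power, and some $p^e\| n$ must satisfy $\EC_1[p^e]\not\cong_{G_K}\EC_2[p^e]$; it therefore suffices to show this forces $\bigl|\log_p|\Sel_{p^e}(\EC_1^\chi/K)|-\log_p|\Sel_{p^e}(\EC_2^\chi/K)|\bigr|$ to be unbounded as $\chi$ ranges over quadratic characters of $K$. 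I would reduce to $e=1$ by induction on $e$: if $\EC_1[p^{e-1}]\cong_{G_K}\EC_2[p^{e-1}]$ but $\EC_1[p^e]\not\cong_{G_K}\EC_2[p^e]$, the obstruction to lifting the isomorphism is a nonzero class in a cohomology group governed by $\EC_1[p]$, which the mod-$p$ analysis below detects once one works with $\Sel_{p^e}$ in place of $\Sel_p$. So assume $e=1$ and write $\bar V_i=\EC_i[p]$.

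\emph{Locating a discrepancy prime.} Since $\det\bar V_1=\det\bar V_2$ equals the mod-$p$ cyclotomic character, the failure $\bar V_1\not\cong_{G_K}\bar V_2$ falls into two cases. If $\bar V_1^{\mathrm{ss}}\not\cong\bar V_2^{\mathrm{ss}}$, then by Brauer--Nesbitt and Chebotarev there is a positive-density set of primes $v$ (of good reduction for both, $v\nmid p$) with $\Tr(\Frob_v|\bar V_1)\not\equiv\Tr(\Frob_v|\bar V_2)\pmod p$; since the determinants agree, one can impose further Frobenius conditions, achievable on a positive-density subset after setting aside a few degenerate configurations of the pair $(\bar V_1,\bar V_2)$ to be treated directly, forcing $\dim_{\F_p}H^0(K_v,\bar V_1\otimes\psi_v)\neq\dim_{\F_p}H^0(K_v,\bar V_2\otimes\psi_v)$ for $\psi_v$ the unramified quadratic character of $K_v$. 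If instead $\bar V_1^{\mathrm{ss}}\cong\bar V_2^{\mathrm{ss}}$ but $\bar V_1\not\cong\bar V_2$, the two modules are inequivalent extensions of the same pair of characters; I would detect the difference of extension classes by choosing $v$ in a positive-density set, via Chebotarev applied to the field trivializing the relevant $H^1$-class, where the two localized classes differ, again producing $\dim_{\F_p}H^0(K_v,\bar V_1\otimes\psi_v)\neq\dim_{\F_p}H^0(K_v,\bar V_2\otimes\psi_v)$ for a suitable quadratic $\psi_v$.

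\emph{Amplification.} The remaining step converts one local discrepancy into an unbounded global one, which I would do with the Mazur--Rubin twisting technology (Kolyvagin systems, as in their work on ranks of twists and Hilbert's tenth problem). For a fixed $\EC/K$ one can, by prescribing the ramification of $\chi$ at a carefully chosen finite set $T$ of auxiliary primes lying in prescribed Frobenius classes, control $\log_p|\Sel_p(\EC^\chi/K)|$ up to a bounded global term by the local Selmer conditions at the primes of $T$, each of which contributes an amount read off from $\bar V|_{G_{K_v}}$ and $\psi_v$. Taking $T$ to consist of many primes $v$ from the discrepancy set above, all in a common Frobenius class so that the per-prime contributions are uniform, and applying the construction to $\EC_1$ and $\EC_2$ with the \emph{same} $\chi$, the contributions differ at every $v\in T$; choosing $T$ so that these differences do not cancel, $\bigl|\log_p|\Sel_p(\EC_1^\chi/K)|-\log_p|\Sel_p(\EC_2^\chi/K)|\bigr|$ grows linearly in $|T|$, contradicting the near-companion bound.

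\emph{Main obstacle.} The amplification step is where the real difficulty lies: global duality (Poitou--Tate) rigidly constrains how Selmer ranks move in a twist family, so the auxiliary primes must be chosen to satisfy the Frobenius conditions that keep the local Selmer conditions predictable while simultaneously guaranteeing that the increments for $\EC_1$ and $\EC_2$ do not conspire to cancel in the difference; this needs the full strength of the Kolyvagin-system machinery together with delicate bookkeeping. The second genuinely delicate point is case (b) — equal semisimplifications, and more generally the inductive step $e>1$ — where one must show that a difference of extension classes is not ``Selmer-invisible''; this is the place where any additional hypotheses would enter.
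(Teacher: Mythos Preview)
The statement you are attempting to prove is not a theorem in the paper but a \emph{conjecture} (Conjecture~7.15 of Mazur--Rubin), quoted as such. The paper offers no proof of it; on the contrary, it explicitly says that for $n>2$ the conjecture ``seems to be out of reach,'' and that only the case $n=2$ is known, by M.~Yu. The paper's own results (Theorem~\ref{Gr nr com. converse} and Proposition~\ref{BK-near-mainthm}) are modular-form analogues of the $n=2$ case only, and their proofs go through Yu's result by attaching auxiliary elliptic curves $\EC_i$ with $\EC_i[2]\cong A_{f_i}[\pi]$.

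Your proposal is therefore not a proof to be compared against the paper's, but a strategy for an open problem. As a strategy it has a genuine gap at the step you yourself flag. The reduction to prime powers is fine, and locating primes where the local invariants of $\bar V_1$ and $\bar V_2$ differ is plausible (though your case (b) and the $e>1$ induction are only gestured at). The real issue is the amplification step: the Mazur--Rubin twisting machinery controls the \emph{parity} of Selmer ranks well, and can change the rank of a single curve by a prescribed amount, but it does not give you independent control over $\Sel_p(\EC_1^\chi/K)$ and $\Sel_p(\EC_2^\chi/K)$ for the \emph{same} $\chi$. Global duality couples the increments at the auxiliary primes to the existing Selmer classes of each curve separately, and there is no mechanism in your sketch preventing the increments for $\EC_1$ and $\EC_2$ from moving together. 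In Yu's $n=2$ argument this is handled by a very special feature of $p=2$: one arranges $\Frob_v$ to act trivially on $\EC_1[4]$ (so the twist raises $\dim\Sel_2(\EC_1^\chi/K)$ by exactly $2$) while having order $3$ on $\EC_2[2]$ (so $\Sel_2(\EC_2^\chi/K)$ is unchanged). No analogue of this trick is known for odd $p$, and your sketch does not supply one.
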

	At present, for $n>2$,  conjecture \ref{conj 1} seems to be out of reach. However for $n=2$, M.Yu \cite[Theorem 2]{Myu} has proved conjecture \ref{conj 1}.  In the second main result of this article, we discuss an analogue of \cite[Theorem 2]{Myu} for certain modular forms of (even) weight $k$ and trivial nebentypus. 
	\begin{theorem}\label{Gr nr com. converse}
			Let $p=2$, $N$  be an odd positive integer, $k \in \NN$ be even and $K$ be a number field. Let $f_1,\, f_2\in S_k(\Gamma_0(Np^t))$ be two $p$-ordinary normalized cuspidal Hecke eigenforms which are  newforms of level $Np^t$ with $t\ge 0$, such that $\cK_\pi=\Q_2$ (i.e. hypothesis \ref{III} holds). Assume that  either one of the  residual Galois representations,  $\bar{\rho}_{f_1}$ or $\bar{\rho}_{f_2}$ is an irreducible $G_K$-module. If $f_1$ and $f_2$ are Greenberg $\pi$-Selmer near-companion forms over $K$, then there exist a $G_K$-isomorphism $A_{f_1}[\pi]\cong A_{f_2}[\pi]$. 
	\end{theorem}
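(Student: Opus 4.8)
The plan is to adapt M.~Yu's strategy from \cite{Myu} to the modular forms setting. The key input is the generalization of Theorem \ref{0.4} already available to us (proposition \ref{mainthm gr nr com.}), together with a suitable analytic/Chebotarev-type input that produces, for a putative non-isomorphism of the residual representations, a family of quadratic twists along which the difference of the Greenberg $\pi$-Selmer ranks is unbounded. So I would argue by contraposition: assume $A_{f_1}[\pi]\not\cong A_{f_2}[\pi]$ as $G_K$-modules and construct, for each $C>0$, a quadratic character $\chi$ of $K$ with $|\dim_{\F_2}S_{Gr}(A_{f_1\chi}[\pi]/K)-\dim_{\F_2}S_{Gr}(A_{f_2\chi}[\pi]/K)|>C$. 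The irreducibility hypothesis on one of $\bar\rho_{f_i}$ (say $\bar\rho_{f_1}$) is what makes the lattice $T_{f_1}$ unique up to homothety, so that the statement `$A_{f_1}[\pi]\not\cong A_{f_2}[\pi]$' is unambiguous, and it is also what lets us run a clean Greenberg--Wiles / global Euler characteristic argument.

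First I would set up the comparison machinery: express $\dim_{\F_2}S_{Gr}(A_{f\chi}[\pi]/K)$ via the global Euler characteristic formula (Greenberg--Wiles) as a sum of local terms $\dim H^1_{Gr}(G_v,A_{f\chi}[\pi]) - \dim H^0(G_v,A_{f\chi}[\pi])$ over $v\in\Sigma_\chi$, plus a global contribution involving $\dim H^0(G_K,A_{f\chi}[\pi])$, $\dim H^0(G_K,A_{f\chi}^\vee[\pi])$ and the archimedean terms. Since $\bar\rho_{f_1}$ is irreducible, $H^0(G_K,A_{f_1\chi}[\pi])=0$ for all $\chi$; and $A_{f_1\chi}[\pi]\cong A_{f_1}[\pi]\otimes\chi$ as a vector space, so the only $\chi$-dependence in the local terms is through the ramification of $\chi$ at the finitely many primes $v\mid N p\infty$ together with the new primes dividing the conductor $M$ of $\chi$, where the local term is essentially computed by the lemmas \ref{Lemma 2}, \ref{Lemma 3}, \ref{Lemma 4}, \ref{Lemma 5}, \ref{Lemma 4.6} already at our disposal. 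The upshot (this is the heart of the matter, mirroring \cite[\S\ref{}]{Myu}) is that the \emph{difference} $\dim S_{Gr}(A_{f_1\chi}[\pi]/K)-\dim S_{Gr}(A_{f_2\chi}[\pi]/K)$ equals a sum, over primes $\ell$ dividing $M$ but coprime to $2N$, of a local discrepancy $\delta_\ell(\chi_\ell):=\dim\bigl(H^1(I_\ell,A_{f_1}[\pi])/\mathrm{Im}\,\kappa^{ur}_{f_1\chi,\ell}\bigr)-\dim\bigl(H^1(I_\ell,A_{f_2}[\pi])/\mathrm{Im}\,\kappa^{ur}_{f_2\chi,\ell}\bigr)$, plus a bounded error coming from the fixed bad primes.

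The decisive step is then to show that the assumption $A_{f_1}[\pi]\not\cong A_{f_2}[\pi]$ forces $\delta_\ell(\chi_\ell)\neq 0$ for a positive-density set of primes $\ell$ with a prescribed choice of ramified quadratic $\chi_\ell$. Concretely, one computes $\dim H^1(I_\ell,A_{f_i}[\pi])$ and $\dim\mathrm{Im}\,\kappa^{ur}_{f_i\chi,\ell}$ in terms of $\mathrm{Frob}_\ell$ acting on $A_{f_i}[\pi]$: for $\ell$ unramified in both forms, these are governed by whether $\mathrm{Frob}_\ell$ has eigenvalue $1$ (resp.\ $-1$, once we twist by the ramified quadratic character at $\ell$) on $A_{f_i}[\pi]$, i.e.\ by $\det(\mathrm{Frob}_\ell-1\mid A_{f_i}[\pi])$ and $\det(\mathrm{Frob}_\ell+1\mid A_{f_i}[\pi])$ modulo $2$, equivalently by $a_\ell(f_i)\bmod 2$ and $(\ell^{k-1}+1)\bmod 2$ — but since $\ell$ is odd and $k$ even the latter is constant, so the genuine invariant is $a_\ell(f_i)\bmod 2$ together with the $2$-dimensional $\F_2$-representation $\mathrm{tr}$ and $\det$. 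If $A_{f_1}[\pi]$ and $A_{f_2}[\pi]$ were isomorphic on every such Frobenius they would be isomorphic as $G_K$-modules (Brauer--Nesbitt plus semisimplicity over $\F_2$, or directly since one is irreducible), contradicting the hypothesis; hence by Chebotarev there is a positive density of primes $\ell$ of $K$ at which the local discrepancy $\delta_\ell$ for the appropriate ramified choice of $\chi_\ell$ is $\pm 1$ with a fixed sign. Finally, I would invoke the standard independence result of Mazur--Rubin (the analogue of \cite[Prop.~?]{mr} on the existence of quadratic characters with prescribed local behaviour at any finite set of primes) to assemble, for each $C$, a single quadratic character $\chi$ of $K$ ramified at more than $C$ such primes $\ell$ (all with the sign making the discrepancies add rather than cancel) and unramified at all the fixed bad primes; this yields $|\dim S_{Gr}(A_{f_1\chi}[\pi]/K)-\dim S_{Gr}(A_{f_2\chi}[\pi]/K)|\ge C-O(1)$, contradicting the Selmer near-companion hypothesis.

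The main obstacle I expect is the prime-by-prime local computation of $\delta_\ell$ at the ramified primes: at an odd prime $\ell$ where $\chi$ ramifies, $A_{f_i\chi}[\pi]$ is a twist of $A_{f_i}[\pi]$ by the nontrivial unramified-away-from-$\ell$ quadratic character, and one must pin down $\dim H^1(I_\ell,A_{f_i\chi}[\pi])$, the image of the unramified Kummer map, and the relevant $H^0$-terms, and track how they differ between $f_1$ and $f_2$ purely through $\mathrm{Frob}_\ell$; the parity subtleties at $p=2$ (e.g.\ the distinction between $A_{f_i}[\pi]$ having a trivial sub vs.\ trivial quotient) are exactly where the argument is delicate and where the irreducibility hypothesis on $\bar\rho_{f_1}$ is used to collapse the ambiguity. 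A secondary point to be careful about is the contribution of the archimedean places (absent in \cite{JSM19} but present here for $p=2$): lemmas \ref{Lemma 5}, \ref{Lemma 4.6} show these depend only on the residual representations, so they enter the bounded error term $O(1)$ and do not affect the unboundedness conclusion.
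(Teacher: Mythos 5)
Your approach is genuinely different from the paper's: you propose to argue \emph{directly} on the modular forms using a Greenberg--Wiles / global Euler characteristic decomposition plus Chebotarev, whereas the paper never attempts this. Instead it introduces auxiliary elliptic curves $\EC_i/K$ with $\EC_i[2]\cong A_{f_i}[\pi]$, imports M.~Yu's theorem \cite[Theorem~2]{Myu} as a black box to produce quadratic characters $\chi\in\mathfrak X$ making $\dim\Sel_2(\EC_1^\chi/K)-\dim\Sel_2(\EC_2^\chi/K)$ unbounded, and then transfers this to $S_{BK}(A_{f_i\chi}[\pi]/K)$ and finally to $S_{Gr}$ via prime-by-prime comparisons of local conditions at primes outside a fixed finite set (Lemma~\ref{Lemma 7}, Proposition~\ref{modular, ecc sel com}, and the $BK$-versus-$Gr$ comparison in the proof of the theorem). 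Your strategy would, if it worked, be cleaner and more self-contained; but as written it has a real gap.

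The gap is in the central claim that the difference $\dim_{\F_2}S_{Gr}(A_{f_1\chi}[\pi]/K)-\dim_{\F_2}S_{Gr}(A_{f_2\chi}[\pi]/K)$ equals a sum of \emph{purely local} discrepancies $\delta_\ell(\chi_\ell)$ over primes $\ell\mid M$, plus a bounded error, and that one can then "pick the sign" of each $\delta_\ell$ freely and make them add. This is not how Selmer rank behaves under twisting. When $\chi$ ramifies at a new prime $q$, the change in $\dim S_{\dagger}(A_{f\chi}[\pi]/K)$ is governed not only by the local spaces at $q$ but by the image of the \emph{global} Selmer elements under restriction $S_{\dagger}(A_{f}[\pi]/K)\to H^1(G_q,A_f[\pi])$. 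Two Selmer groups with local conditions that differ only at $q$ sit between a strict and a relaxed Selmer group, and the actual dimensions within that sandwich depend on these restriction maps; they are not determined by the local terms in a Greenberg--Wiles formula. This is precisely why M.~Yu's argument, and hence the paper's Lemma~\ref{set X} and Proposition~\ref{ecc sel com}, run Chebotarev over the field $T=T_1F$ where $T_1$ is the Galois closure of the field cut out by all of $\Sel_2(\EC_1/K)$ (and $F\supset K(\EC_1[4])$ to control parity of the jump): requiring $\Frob_q|_T=1$ is what guarantees that the twist at $q$ \emph{raises} the Selmer rank of $\EC_1$ by $2$ while $\Frob_q|_{K_2}$ of order $3$ ensures the rank of $\EC_2$ is unchanged, and the process must be iterated with the auxiliary field $T_1$ updated at each step. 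Your Chebotarev condition is phrased only in terms of $K(A_{f_i}[\pi])$ and its $\pi^2$-level; you have no control over the global-to-local restriction maps, so there is no reason the discrepancies "add rather than cancel." A secondary issue, not addressed, is that a Greenberg--Wiles computation at $p=2$ in weight $k>2$ needs the Greenberg local conditions to be orthogonal complements under the local Tate pairing (self-dual for $E[2]$ but not automatic for $A_f[\pi]$, $\pi\mid 2$); the paper sidesteps this entirely by working with the $2$-Selmer groups of the auxiliary curves. So the proposal, while not wrong in spirit, omits exactly the hard content that the paper's reduction to elliptic curves is designed to avoid redoing.
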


   Note that (as there are no obvious tools to attack conjecture \ref{conj 1} for an odd prime)  there is no discussion related to conjecture \ref{conj 1} in \cite{JSM19} and  there is no analogue of theorem \ref{Gr nr com. converse} in \cite{JSM19}.

 We now discuss the main ideas in the proof of theorem \ref{Gr nr com. converse}. We prove the contrapositive statement i.e. if $A_{f_1}[\pi]\ncong A_{f_2}[\pi]$ as $G_K$-module, then we  show that $f_1$ and $f_2$ are not Greenberg $\pi$-Selmer near-companion over $K$. First note that for $i=1, 2$, the hypothesis \ref{III} on $f_i$ gives us that  $\Gal(K(A_{f_i}[\pi])/K)\hookrightarrow S_3$ and then it follows that $K_1=K(A_{f_1}[\pi])=K(A_{f_2}[\pi])=K_2$ if and only if $A_{f_1}[\pi]\cong A_{f_2}[\pi]$ as $G_K$-module. Let $g_i(X)$ be the cubic irreducible polynomial over $K$ such that $K_i$ is the splitting field of $g_i(X)$, for $i=1,2 $. Consider the elliptic curve $\EC_i$ defined by  $\EC_i:Y^2=g_i(X)$. Then $K_i=K(\EC_i[2])$ and  $A_{f_i}[\pi]\cong \EC_i[2]$ as $G_K$-module. We emphasise that the residual Selmer group of $f_i$ at $\pi$ is a priori  not determined by $A_{f_i}[\pi]\cong \EC_i[2]$. So we need to do more work, in addition to using results from \cite{Myu}, to arrive at theorem \ref{Gr nr com. converse}.

 We construct in proposition \ref{modular, ecc sel com}, certain infinite set $\mathfrak{X}$ (see  \eqref{def X})  of quadratic characters $\chi$ of $K$  such that $\abs{\dim_{\F_2}S_{BK}(A_{f_i\chi}[\pi]/K)-\dim_{\F_2}\Sel_2(\EC_i/K)}$ is bounded independent of $\chi\in\mathfrak{X}$ for $i=1,2$. Here $S_{BK}(A_{f_i\chi}[\pi]/K)$ denotes the Bloch-Kato $\pi$-Selmer group of $f_i\chi$ over $K$ (see definition \ref{pi.sel.gp}). 
 Combining proposition \ref{modular, ecc sel com} with \cite[Theorem 2]{Myu}, we arrive at theorem \ref{BK-near-mainthm}, which is an analogue of theorem \ref{Gr nr com. converse}, obtained by replacing the Greenberg $\pi$-Selmer group with the Bloch-Kato $\pi$-Selmer group. From theorem \ref{BK-near-mainthm}, we go on to deduce theorem \ref{Gr nr com. converse} by comparing the Bloch-Kato  and the Greenberg $\pi$-Selmer groups of $f_i$ for $i=1,2$. Throughout \S\ref{converse}, we use the hypothesis that $\bar{\rho}_{f_2}$(say) is irreducible.
 \begin{remark}
\textnormal{   Recall $p$ is equal to $2$ in theorem \ref{Gr nr com. converse}. As we have considered elliptic curves $\EC_i/K$ such that $\EC_i[2]\cong A_{f_i}[\pi]$ as $G_K$-modules and used the results from \cite{Myu}, we need to go through $S_{BK}(A_{f_i\chi}[\pi]/K)$ in the proof of theorem \ref{Gr nr com. converse}. As mentioned in remark \ref{rmk0.4}, for the Bloch-Kato $\pi$-Selmer group, it seems difficult to determine  $H^1_{BK}(G_\p, A_{f_i\chi}[\pi])$ with $\mathfrak p \mid p$  in terms of $A_{f_i\chi}[\pi]$ and hence our result in theorem \ref{mainthm} is restricted to Greenberg $\pi$-Selmer group.   
   However for establishing the converse (theorem \ref{Gr nr com. converse}), we only need to compare the local conditions for the Greenberg and Bloch-Kato $\pi$-Selmer groups at  certain set of infinitely many primes away from $Np\infty$, which is feasible for us. Thus we have the converse result for both Greenberg (theorem \ref{Gr nr com. converse}) and Bloch-Kato (theorem \ref{BK-near-mainthm}) $\pi$-Selmer groups.}
\end{remark}
	\subsection*{Structure of the article:} The \S \ref{Premi.} is preliminary in nature, where we recall the Galois representations attached to modular forms and the definitions of various Selmer groups. In \S\ref{pf of main thm}, we discuss the proof of our first main result (theorem \ref{mainthm}). We prove our second main result (theorem \ref{Gr nr com. converse})  in \S\ref{converse}.  
\section*{Acknowledgement}	
	We thank the anonymous referee for various helpful comments and suggestions for improvements. Abhishek is supported by institute fellowship  at IIT Kanpur.      
     Somnath Jha acknowledges the support of ANRF grant CRG/2022/005923. 
	
	
	\section{Preliminaries}\label{Premi.}

	Notation: Let  $p$ be a prime. (The results of this section are valid for any prime $p \geq 2$ and we apply them for $p=2$ in \S\ref{pf of main thm} and \S\ref{converse}). 
 Throughout we fix an algebraic closure $\overline{\Q}$ of $\Q$ in $\CC$ and  embeddings $i_\infty:\overline{\Q}\rightarrow\CC$ and  $i_\ell:\overline{\Q}\rightarrow\overline{\Q}_\ell$, for every finite rational prime $\ell$. For a number field $K$, $G_K:=\Gal(\bar{K}/K)$
denotes the absolute Galois group of $K$. For a finite set $\Sigma$ of primes of $K$, $G_{K, \Sigma}$ denotes the Galois group of the maximal algebraic extension of $K$ unramified outside $\Sigma$.  We denote the decomposition subgroup of $G_K$ at a prime $v$ of $K$ by $G_v$ and $I_v$ will denote the inertia subgroup of $G_v$.

	\subsection{Galois representation attached to a modular form}\label{2-adic Galois rep.}
	 Let $N, k \in \NN$ with $k$ even and $(p, N)=1$. Let $f$ = ${\sum}_{n\in \NN}a_n(f)q^n\in S_k(\Gamma_0(Np^t))$ be a normalized cuspidal Hecke eigenform which is a newform of level $Np^t$ with $t \geq 0$. Let $L$ be a number field containing $\cK_f:=\Q(\{a_n(f):n\in \NN\})$. (Note that in theorems \ref{mainthm} and \ref{Gr nr com. converse},  we have $f_1, f_2\in S_k(\Gamma_0(Np^t))$ and we will choose $L=\cK_{f_1, f_2}=:\Q(\{a_n(f_1), a_n(f_2):n\in \NN\})$ for both $f_1$ and $f_2$.)   Let $\pi$ be a prime in $\cO_L$ lying above $p$ induced by the embedding $i_p$ and we also denote by $\pi$, a fixed uniformizer of the ring of integers $\cO_{L_\pi}$ of $L_\pi$. We say $f$ is  $p$-ordinary if $i_p(a_p)$ is a $p$-adic unit. Let $\omega_p:G_\Q\rightarrow \ZZ_p^\ast$ be the $p$-adic cyclotomic character. The following theorem is due to Eichler, Shimura, Deligne, Mazur–Wiles, Wiles and others (see \cite[Theorem 3.26]{Hida}).
	\begin{theorem}\label{gal-rep of f}
		Let $f=\underset{n\in \NN}{\sum}a_n(f)q^n\in S_k(\Gamma_0(Np^t))$ as above.  There exists a Galois representation $\rho_f:G_\Q\rightarrow \GL_2(L_\pi)$,  such that
		\begin{enumerate}
			\item $\rho_f$ is unramified at all prime $\ell\nmid Np$ and $\rho_f$ is odd i.e. $\det(\rho_f(c))=-1$, for each complex conjugation $c$. Moreover, the characteristic polynomial of $\rho_f(\Frob_\ell)$, for $\ell\nmid Np$, is given by
			\[\det(1-\rho_f(\Frob_\ell)T)=1-a_\ell(f)T+\omega_p(\Frob_\ell)^{k-1}.\]
			\item(Deligne, Mazur-Wiles) Suppose that $f$ is ordinary at $p$. Let $\delta_{f}$ be the unramified character with $\delta_{f}(\Frob_p)=\alpha_p(f)$, where $\alpha_p(f)$ is the $p$-adic unit root of $X^2-a_p(f)X+p^{k-1}$. Then, 
	\begin{small}		\begin{align*}
				\rho_f|_{G_p}\sim \begin{pmatrix}
					\delta_{f}^{-1}\omega_p^{k-1} & \ast \\
					0 & \delta_{f}
				\end{pmatrix}.
			\end{align*}\label{p_mid p}
            \end{small}
			\item(Langlands, Carayol) Let $\ell\ne p$ be a rational prime. Suppose that $\ell\mid \mid N$, then
	{\small		\[\rho_f|_{G_\ell}\sim \begin{pmatrix}
				\eta_f\omega_p & \ast \\ 
				0 & \eta_f
			\end{pmatrix}, \text{ where $\eta_f:G_\ell\rightarrow \ZZ_p^\times$  is an unramified character.}\]}
			\label{p_mid N}
		\end{enumerate}
	\end{theorem}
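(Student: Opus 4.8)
The statement collects three classical facts, and the plan is to derive each from its standard source. I would first realize $\rho_f$ geometrically: take $Y=Y_1(Np^t)$ the open modular curve, $\mathcal E\to Y$ the universal elliptic curve, and $W$ a smooth projective model of the $(k-2)$-fold fibre product $\mathcal E^{k-2}$ over a suitable compactification --- the Kuga--Sato variety of Deligne. The group $H^{k-1}_{\mathrm{et}}(W_{\overline\Q},\Q_p)$ carries commuting actions of $G_\Q$ and of the Hecke operators, and cutting out the $f$-eigensystem produces a two-dimensional $L_\pi$-vector space $V_f$ with continuous $G_\Q$-action, unramified outside $Np$ since $W$ has good reduction there. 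For the Frobenius characteristic polynomial I would invoke the Eichler--Shimura congruence relation on the mod-$\ell$ reduction for $\ell\nmid Np$, which gives $\Tr\rho_f(\Frob_\ell)=a_\ell(f)$ and $\det\rho_f(\Frob_\ell)=\ell^{k-1}=\omega_p(\Frob_\ell)^{k-1}$. Oddness is then a determinant computation: $\det\rho_f=\omega_p^{k-1}$ (the nebentypus is trivial as $f\in S_k(\Gamma_0(Np^t))$), so $\det\rho_f(c)=\omega_p(c)^{k-1}=(-1)^{k-1}=-1$ because $k$ is even. (For $k=2$ one may instead take $V_f$ to be the $\pi$-adic Tate module of the $f$-isotypic quotient of $J_1(Np^t)$, the original Eichler--Shimura case.)

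\textbf{Part (2).} Here I would assume $i_p(a_p(f))$ is a $p$-adic unit, so that $X^2-a_p(f)X+p^{k-1}$ has a unique unit root $\alpha_p(f)$, and let $\delta_f$ be the unramified character with $\delta_f(\Frob_p)=\alpha_p(f)$. The goal is to show $\rho_f|_{G_p}$ is an extension of $\delta_f$ by $\delta_f^{-1}\omega_p^{k-1}$. One route: Hida's ordinary projector $e=\lim U_p^{n!}$ acts on $p$-adically completed cohomology and splits off a summand free of rank one over the relevant Iwasawa algebra, whose specialisation realizes the unramified quotient character $\delta_f$; combined with $p$-adic Hodge theory --- $\rho_f|_{G_p}$ is de Rham with Hodge--Tate weights $\{0,k-1\}$, crystalline when $t=0$ and potentially semistable in general, and the unit Newton slope forces the associated filtered $(\varphi,N)$-module to be reducible --- this yields the triangular shape, and matching $\det\rho_f|_{G_p}=\omega_p^{k-1}$ pins down the two characters. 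I expect this step to be the main obstacle: unlike (1) and (3) it is not formal from the construction, and it rests on the deeper results of Mazur--Wiles and Wiles (equivalently, Hida theory together with $p$-adic Hodge theory).

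\textbf{Part (3) and conclusion.} For $\ell\ne p$ with $\ell\mid\mid N$, the newform $f$ has level exactly divisible by $\ell$ and trivial nebentypus, so its local automorphic component $\pi_\ell$ is an unramified twist of the Steinberg representation. I would then invoke Carayol's local--global compatibility theorem at primes away from $p$, which identifies $\rho_f|_{G_\ell}$ with the Galois representation attached to $\pi_\ell$ under the (suitably normalized) local Langlands correspondence; for twisted Steinberg this is the non-split extension $\begin{pmatrix}\eta_f\omega_p & \ast\\ 0 & \eta_f\end{pmatrix}$ with $\eta_f$ unramified and $\eta_f(\Frob_\ell)$ the $U_\ell$-eigenvalue of $f$ --- exactly the asserted shape. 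Assembling (1), (2) and (3) proves the theorem; these are precisely the ingredients recorded in \cite[Theorem 3.26]{Hida}.
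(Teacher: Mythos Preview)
Your proposal is a reasonable sketch of the classical arguments, and indeed you close by pointing to \cite[Theorem~3.26]{Hida}, which is exactly what the paper does: the paper does not supply its own proof of this theorem but simply records it as a known result due to Eichler, Shimura, Deligne, Mazur--Wiles, Wiles, Langlands, and Carayol, with a citation to Hida. So there is nothing to compare --- your outline is consistent with (and more detailed than) the paper's treatment, which is purely a citation.
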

Let $V_f\cong L_\pi\oplus L_\pi$ denote the representation space associated with the Galois representation $\rho_f$. Fix a $G_\Q$-invariant lattice $T_f=\cO_{L_\pi}\oplus \cO_{L_\pi}$ and {let $\bar{\rho}_f:G_\Q\rightarrow \GL_2(\frac{\cO_{L_\pi}}{\pi_{L}})$ denote the residual Galois representation of $\rho_f$ associated to $T_f$.}	
	\subsection{Selmer groups of a modular form}\label{Sec 3.1}
	Let $f\in S_k(\Gamma_0(Np^t))$ be a newform   in the setting of \S\ref{2-adic Galois rep.}.	Let $V_f\cong L_\pi \oplus L_\pi$ denote the representation space associated to the Galois representation $\rho_f$. Fix a $G_K$-invariant, $\cO_{L_\pi}$-lattice $T_f\cong \cO_{L_\pi}\oplus\cO_{L_\pi}$ of $V_f$ and set $A_f:=V_f/T_f$. We have the exact sequence $0\rightarrow T_f\rightarrow V_f\stackrel{P_f}{\rightarrow} A_f\rightarrow0$.
	Let $\chi$ be a quadratic character of $K$ and for $j\in \ZZ$, set $V_{f\chi(-j)}:=V_f\otimes\chi\omega_p^{-j}$ and $T_{f\chi(-j)}:=T_f\otimes\chi\omega_p^{-j}$ and $A_{f\chi(-j)}:=\frac{V_{f\chi(-j)}}{T_{f\chi(-j)}}$. Let $M:=cond(\chi)$ and set
		$\Sigma:=\{v \text{ prime of }K: v\mid NMp\infty\}$. For each $v\in\Sigma$, we choose a subgroup $H^1_{\dagger}(G_v, A_{f\chi(-j)})\subset H^1(G_v, A_{f\chi(-j)})$. Let $i:A_{f\chi(-j)}[\pi]\rightarrow A_{f\chi(-j)}$ be the inclusion map and for any prime $v$ of $K$, let $i_v:H^1(G_v, A_{f\chi(-j)}[\pi])\rightarrow H^1(G_v, A_{f\chi(-j)})$ be the map on cohomology induced by $i$. For any $v\in\Sigma$, define $H^1_{\dagger}(G_v, A_{f\chi(-j)}[\pi]):=i_v^{-1}(H^1_{\dagger}(G_v, A_{f\chi(-j)}))$. Then the $\dagger$ $\pi$-Selmer group $S_{\dagger}(A_{f\chi(-j)}[\pi]/K)$ is defined as follows:
		\begin{eqnarray}\label{pi.sel.gp}
			S_{\dagger}(A_{f\chi(-j)}[\pi]/K):=\ker\Big(H^1(G_{K, \Sigma}, A_{f\chi(-j)}[\pi])\rightarrow \underset{v\in \Sigma}{\prod}\frac{H^1(G_v, A_{f\chi(-j)}[\pi])}{H^1_{\dagger}(G_v, A_{f\chi(-j)}[\pi])}\Big).
		\end{eqnarray}
      Now we   make special choices, $\dagger =BK$ and $\dagger =Gr$ respectively in  $H^1_\dagger(G_v, A_{f\chi(-j)})$ to define  the Bloch-Kato (BK) and Greenberg (Gr) $\pi$-Selmer groups as follows:

      Define $H^1_f(G_\p, V_{f\chi(-j)}):=\ker (H^1(G_\p, V_{f\chi(-j)})\rightarrow H^1(G_\p, V_{f\chi(-j)}\otimes_{\Qp}B_{crys}))$ for  $\p\mid p$, where $B_{crys}$ is the ring of periods, as defined in  \cite{fon}. Put $\dagger=BK$ and define
       \textbf{\begin{eqnarray}\label{BK.Loc.cond}
			H^1_{BK}(G_v, A_{f\chi(-j)}):=\begin{cases}
				P_v(H^1(G_v/I_v, V_{f\chi(-j)}^{I_v})), & v\in \Sigma, v\nmid p\\
				P_\p(H^1_f(G_\p, V_{f\chi(-j)})), & \p\mid p.
			\end{cases}
	\end{eqnarray}}
      \noindent Here $P_v: H^1(G_v, V_{f\chi(-j)})\rightarrow H^1(G_v, A_{f\chi(-j)})$ is  induced by the projection  $V_{f\chi(-j)}\rightarrow A_{f\chi(-j)}$.

Next assume that $f$ is a $p$-ordinary  newform and take $\dagger=Gr$ to define the Greenberg $\pi$-Selmer group. As $f$ is ordinary at $p$, we have a filtration of $V_f$ as $G_\p$-module,
	\begin{small}	\begin{eqnarray}\label{fil. of Af}
			0\rightarrow V_f^+\rightarrow V_f\rightarrow V_f^-\rightarrow0,
		\end{eqnarray}
        \end{small}
		where $V_f^-$ and $V_f^+$ are $L_\pi$-vector space of dimension $1$ and the action of $G_\p$ on $V_f^-$ is unramified. Tensoring \eqref{fil. of Af} by $\chi\omega_p^{-j}$, we have the corresponding filtration for $V_{f\chi(-j)}$. Put $A^+_{f\chi(-j)}:=P_f(V_{f\chi(-j)}^+)$ and set $A_{f\chi(-j)}^-=\frac{A_{f\chi(-j)}}{A_{f\chi(-j)}^+}$. We now define 
	\begin{small}	\begin{eqnarray}\label{Gr.Loc.cond}
			H^1_{Gr}(G_v, A_{f\chi(-j)}):=\begin{cases}
				\ker\left(H^1(G_v, A_{f\chi(-j)})\rightarrow H^1(I_v, A_{f\chi(-j)})\right), & v\in \Sigma, v\nmid p\\
				\ker\left(H^1(G_\p, A_{f\chi(-j)})\rightarrow H^1(I_\p, A_{f\chi(-j)}^-)\right), & \p\mid p.
			\end{cases}
	\end{eqnarray}   
    \end{small}
	
	\section{2-Selmer companion forms}\label{pf of main thm} We now fix $p=2$, a  number field $K$, an odd positive square-free integer $N$ and an even positive integer $k$. Let $f_1, f_2\in S_k(\Gamma_0(Np^t))$ be $p$-ordinary normalized cuspidal Hecke eigenforms  which are  newforms of level $Np^t$ with $t \geq 0$.  Let $\cK=\cK_{f_1, f_2}$ be the number field generated by the Fourier coefficients of $f_1,\, f_2$.  Let $\pi\mid p$ be a prime of $\cK$ induced by the embedding $i_p$. We fix a uniformizer, also denoted by $\pi$, of the ring of integers $\cO=\cO_{\cK_\pi}$ of the completion $\cK_\pi$. Let $\chi$ be a quadratic character of $K$ with conductor $M$ and recall $\Sigma:=\{v \text{ prime of }K: v\mid NMp\infty\}$. For  $i=1, 2$, let $S_{Gr}(A_{f_i\chi(-j)}[\pi]/K)$ be the Greenberg $\pi$-Selmer group of $f_i\otimes \chi\omega_p^{-j}$ over $K$ (see \ref{Sec 3.1}).  We recall from   \cite{JSM19}, the definition of  Greenberg $\pi$-Selmer companion  forms. 

    \begin{definition}\label{Sel.com.mod.}
		 We say $f_1$ and $f_2$ are Greenberg $\pi$-Selmer companion  forms over  $K$, if for each critical twist $j$ with $0\le j\le k-2$ and for every quadratic character $\chi$ of $K$,  there is a group isomorphism of their Greenberg $\pi$-Selmer group, i.e. 
		$S_{Gr}(A_{f_1\chi(-j)}[\pi]/K)\cong S_{Gr}(A_{f_2\chi(-j)}[\pi]/K).$     
	\end{definition}

	We now make preparation for the proof of theorem \ref{mainthm}. A prime $v\nmid p$ of $K$ is said to be `good' for a newform $f\in S_k(\Gamma_0(Np^t))$ if the Galois representation $\rho_f$ (theorem \ref{2-adic Galois rep.}) is unramified at $v$; otherwise, it is said to be `bad' at $v$. Let $\chi:G_K\rightarrow\{\pm 1\}$ be a quadratic character of $K$ and for a prime $v$ in $K$, let $\chi_v:=\chi|_{G_v}:G_v\rightarrow\{\pm1\}$ be the restriction on the decomposition group $G_v$. 
      Let $I_v^\chi:=\ker (\chi_v|_{I_v})$ and  set $H_{v, \chi}:=I_v/I_v^\chi$. Note that $H_{v, \chi}\cong \ZZ/2$ if and only if $\chi$ is ramified at $v$. We say $\chi$ is `good' at a prime $v$ of $K$, if it is unramified at $v$. Otherwise, $\chi$ is said to be `bad' at $v$.  We state the following hypothesis:
	\begin{customhypo}{I}\label{I}$\cK_\pi$ is an unramified extension of $\Q_2$.
	\end{customhypo}
	\noindent Note that we can take the uniformizer $\pi$ in $\cK_\pi$ to be equal to $2$ whenever hypothesis \ref{I} holds.

    {Next, for a newform $f\in S_k(\Gamma_0(Np^t))$, we recall the definition  of conductor of $\rho_f$ and $\bar{\rho}_f$ at a finite prime $v\nmid p$, following  Serre (see \cite{li}).
\begin{definition}
     Let $f\in S_k(\Gamma_0(Np^t))$ be newform and  $\rho\in\set{\rho_f, \bar{\rho}_{f}}$. The conductor of $\rho$ at a finite prime $v\nmid p$ of $\cO_K$ is denoted by $cond_v(\rho)$ and is defined as 
    \[cond_v(\rho):=v^{n_v(\rho)},\]    
where  $n_v(\rho_f):=\dim_{\cK_\pi}\frac{V_f}{V_f^{I_v}}+sw(\rho_f^{ss}),$ and
        $n_v(\bar{\rho_f}):= \dim_{\cO/\pi}\frac{A_f[\pi]}{A_{f}[\pi]^{I_v}}+sw(\bar{\rho}_f),$ where $sw(\rho)\in \NN\cup \set{0}$ is the Swan conductor defined in \cite[\S1]{li}.     
    \end{definition}
    Further, we have that   $sw(\rho_f^{ss})=sw(\bar{\rho}_f)$ \cite[Proposition 1.1]{li}. }

    Recall $N $ is a square-free integer. Next, we state the another hypothesis:
	\begin{customhypo}{II}\label{C2}
		Let  $v\mid N$ be a prime of $K$ and $f\in S_k(\Gamma_0(Np^t))$ be a newform. Then    $$ cond_v(\bar{\rho}_{f})=cond_v({\rho}_{f}).$$
	\end{customhypo}
	\begin{remark}\label{remark 1}
	\textnormal{	If $cond_v(\bar{\rho}_f)=cond_{v}(\rho_{f})$, then $A_f^{I_v}$ is $\pi$-divisible (see \cite[Lemma 4.1.2]{epw}).}
	\end{remark} 
	Now we fix once {and} for all, $f_1, f_2\in S_k(\Gamma_0(Np^t))$ as above with $\cK=\cK_{f_1, f_2}$ and  assume that $f_1, f_2$ are in the setting of Theorem \ref{mainthm} and they satisfy all the conditions of theorem \ref{mainthm}.
Let $i_v^{ur}:H^1(I_v, A_{f\chi(-j)}[\pi])\rightarrow H^1(I_v, A_{f\chi(-j)})$ be the map on the cohomology groups induced by $i$.
	Let $\kappa_{f\chi(-j), v}:A_{f\chi(-j)}^{G_v}/\pi \rightarrow H^1(G_v, A_{f\chi(-j)}[\pi])$ and  $\kappa_{f\chi(-j), v}^{ur}:A_{f\chi(-j)}^{I_v}/\pi \rightarrow H^1(I_v, A_{f\chi(-j)}[\pi])$  denote the Kummer maps induced by the multiplication by $\pi$ on $A_{f\chi(-j)}$. Similarly, we denote by $\kappa^-_{f\chi(-j), v}$ and $\kappa_{f\chi(-j), v}^{-ur}$  the respective Kummer maps induced by the multiplication by $\pi$ on $A_{f\chi(-j)}^-$.
	From the definitions of Kummer maps, we note that $\ker i_v^{ur}=Im(\kappa_{f\chi(-j), v}^{ur})\cong A_{f\chi(-j)}^{I_v}/\pi$. Using this isomorphism, we identify $\ker i_v$ and $\ker i_v^{ur}$ with $A_{f\chi(-j)}^{G_v}/\pi$ and $A_{f\chi(-j)}^{I_v}/\pi$, respectively.  We also make a similar identification for $A_{f\chi(-j)}^-$.
	For a prime $v\nmid p$ in $K$, we have the  commutative diagram, 
	\begin{small}\begin{eqnarray}\label{Commu.diag.1}
		\begin{tikzcd}  & H^1_{Gr}(G_v, A_{f\chi(-j)}[\pi]) \arrow[r, ]\arrow[d, "i_v"]  & H^1(G_v, A_{f\chi(-j)}[\pi])\arrow[r, "\gamma_v"] \arrow[d, "i_v"] & H^1(I_v, A_{f\chi(-j)}[\pi])  \arrow[d, "i_v^{ur}"] &  \\
			0 \arrow[r,] & H^1_{Gr}(G_v, A_{f\chi(-j)}) \arrow[r, ] & H^1(G_v, A_{f\chi(-j)}) \arrow[r, "\gamma_v"] & H^1(I_v, A_{f\chi(-j)}),  
		\end{tikzcd}
	\end{eqnarray}
    \end{small}
	where $\gamma_v$ is induced by   the restriction map from $G_v$ to $I_v$. We observe that,
	\begin{eqnarray}
		\begin{aligned}
			x\in H^1_{Gr}(G_v, A_{f\chi(-j)}[\pi]) \iff \gamma_v(i_v(x))=0
			\iff i_v^{ur} (\gamma_v(x))=0\\
			\iff \gamma_v(x)\in \ker i_v^{ur} \cong A_{f\chi(-j)}^{I_v}/\pi. 
		\end{aligned}
	\end{eqnarray}
	Similarly, for $\p\mid p$, we have that $x\in H^1_{Gr}(G_\p, A_{f\chi(-j)}[\pi])\iff \gamma_\p^\prime(x)\in A_{f\chi(-j)}^{-I_\p}/\pi$, where $\gamma_\p^\prime$ is the composition of $\gamma_\p$ with the natural map $H^1(I_\p, A_{f\chi(-j)}[\pi])\rightarrow H^1(I_\p, A^-_{f\chi(-j)}[\pi])$. 
	Based on these observations, we arrive at an equivalent definition of the Greenberg $\pi$-Selmer group:
	{\small
		\begin{lemma}\label{modfi.se.gp}
			$S_{Gr}(A_{f\chi(-j)}[\pi]/K):=\ker\left(H^1(G_{K, \Sigma}, A_{f\chi(-j)}[\pi])\rightarrow \underset{v\in \Sigma v\nmid p}{\prod}\frac{H^1(I_v, A_{f\chi(-j)}[\pi])}{Im(\kappa_{f\chi(-j), v}^{ur})}\times\underset{\p\mid p}{\prod}\frac{H^1(I_\p, A_{f\chi(-j)}^-[\pi])}{Im(\kappa_{f\chi(-j), \p}^{-ur})}\right). \qed$
	\end{lemma}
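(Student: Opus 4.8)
The plan is to verify that the Selmer group defined in \eqref{pi.sel.gp} and the one in the statement are cut out by the \emph{same} local conditions at every $v$ in the common finite set $\Sigma$ (note that $\{\p\mid p\}\subseteq\Sigma$), so that the two kernels inside $H^1(G_{K, \Sigma}, A_{f\chi(-j)}[\pi])$ coincide. Concretely, for $v\in\Sigma$ with $v\nmid p$ I would show that $\gamma_v$ induces an injection
\[
\frac{H^1(G_v, A_{f\chi(-j)}[\pi])}{H^1_{Gr}(G_v, A_{f\chi(-j)}[\pi])}\hookrightarrow\frac{H^1(I_v, A_{f\chi(-j)}[\pi])}{Im(\kappa_{f\chi(-j), v}^{ur})},
\]
and similarly for $\p\mid p$ with $\gamma_\p^\prime$ in place of $\gamma_v$ and $A_{f\chi(-j)}^-$ in place of $A_{f\chi(-j)}$. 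Once this is in hand, a global class restricts into $H^1_{Gr}(G_v,-)$ at every $v\in\Sigma$ precisely when its image satisfies the conditions in the displayed formula, which is the lemma.

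For $v\nmid p$ this is exactly the chain of equivalences recorded just above the lemma together with the commutative diagram \eqref{Commu.diag.1}: $x\in H^1_{Gr}(G_v, A_{f\chi(-j)}[\pi])$ means $i_v(x)\in\ker\gamma_v$, which by commutativity is $i_v^{ur}(\gamma_v(x))=0$, i.e. $\gamma_v(x)\in\ker i_v^{ur}$; and the long exact $I_v$-cohomology sequence of $0\to A_{f\chi(-j)}[\pi]\xrightarrow{i} A_{f\chi(-j)}\xrightarrow{\pi} A_{f\chi(-j)}\to 0$ identifies $\ker i_v^{ur}$ with the image of the connecting map $A_{f\chi(-j)}^{I_v}/\pi\to H^1(I_v, A_{f\chi(-j)}[\pi])$, which is by definition $Im(\kappa_{f\chi(-j), v}^{ur})$. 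This yields the claimed injection (well-definedness is immediate, and injectivity holds because $\gamma_v(x)\in Im(\kappa_{f\chi(-j), v}^{ur})$ forces $x\in H^1_{Gr}$). The archimedean places of $K$ lie in $\Sigma$ and fall under this same case with the convention $I_v=G_v$, $\gamma_v=\mathrm{id}$, so nothing further is needed there.

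For $\p\mid p$ I would run the same argument through the quotient $A_{f\chi(-j)}^-$. The morphism of short exact sequences induced by the projection $A_{f\chi(-j)}\twoheadrightarrow A_{f\chi(-j)}^-$ (and its restriction to $\pi$-torsion) produces a commutative square showing that the composite $H^1(G_\p, A_{f\chi(-j)}[\pi])\xrightarrow{i_\p}H^1(G_\p, A_{f\chi(-j)})\to H^1(I_\p, A_{f\chi(-j)}^-)$ defining $H^1_{Gr}(G_\p,-)$ equals $i_\p^{-ur}\circ\gamma_\p^\prime$, where $i_\p^{-ur}\colon H^1(I_\p, A_{f\chi(-j)}^-[\pi])\to H^1(I_\p, A_{f\chi(-j)}^-)$ is the analogue of $i_v^{ur}$ and $\gamma_\p^\prime$ is as defined before the lemma. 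Hence $x\in H^1_{Gr}(G_\p, A_{f\chi(-j)}[\pi])$ iff $\gamma_\p^\prime(x)\in\ker i_\p^{-ur}$, and the Kummer sequence for multiplication by $\pi$ on $A_{f\chi(-j)}^-$ identifies this kernel with $Im(\kappa_{f\chi(-j), \p}^{-ur})$.

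I do not anticipate a genuine obstacle: the lemma is a bookkeeping reformulation once \eqref{Commu.diag.1} and its $A^-$-analogue are set up. The only points requiring a moment's care are that the connecting maps in the two $\pi$-Kummer sequences are \emph{literally} the maps $\kappa_{f\chi(-j), v}^{ur}$ and $\kappa_{f\chi(-j), \p}^{-ur}$ (so the identifications $\ker i_v^{ur}=Im(\kappa_{f\chi(-j), v}^{ur})$ and $\ker i_\p^{-ur}=Im(\kappa_{f\chi(-j), \p}^{-ur})$ are exact, not merely up to the isomorphism with $A_{f\chi(-j)}^{I_v}/\pi$), and that the projection $A_{f\chi(-j)}\twoheadrightarrow A_{f\chi(-j)}^-$ is compatible both with passing to $\pi$-torsion and with restriction to $I_\p$, which is what makes the square for the $\p\mid p$ case commute.
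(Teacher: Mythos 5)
Your proposal is correct and follows essentially the same route as the paper: the paper's own "proof" is precisely the discussion surrounding diagram \eqref{Commu.diag.1} and the chain of equivalences immediately preceding the lemma, which is what you reproduce and flesh out. The points you flag for care — that $\ker i_v^{ur}=\mathrm{Im}(\kappa_{f\chi(-j),v}^{ur})$ via the connecting map in the $\pi$-Kummer sequence, the compatible square through $A_{f\chi(-j)}^-$ for $\p\mid p$, and the convention $I_v=G_v$ at archimedean places — are exactly the ingredients the paper relies on implicitly, so there is no gap.
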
}

	Next, we study the Greenberg local condition at a prime $v\mid N$ of $K$. Let $f\in \{f_1, f_2\}$.  By {theorem \ref{gal-rep of f}(3)}, $A_{f}$ has the following filtration as $G_v$-module,
	\begin{small}\begin{align}\label{A'' fil}
		0\longrightarrow A_f^\prime\longrightarrow A_f\stackrel{P_v}{\longrightarrow} A_f^{\prime\prime}\rightarrow 0,  
	\end{align}\end{small}
\noindent where $A_f^\prime$ and $A_f^{\prime\prime}$ are the co-free modules of co-rank $1$ and the action of $G_v$ on $A_f^{\prime\prime}$ is via {unramified character $\eta_f$}. Taking co-invariance by $I_v$ on the $\pi$-torsion of \eqref{A'' fil}, we get: 
	$(A_f[\pi])_{I_v}\rightarrow (A_f^{\prime\prime}[\pi])_{I_v}\rightarrow 0$.
	Since  $A_f^{\prime\prime}[\pi]$ is unramified at $v$,  we get $\dim_{\cO/\pi}(A_f^{\prime\prime}[\pi])_{I_v}=\dim_{\cO/\pi}A_f^{\prime\prime}[\pi]=1$. Suppose that the hypothesis $\ref{C2}$ holds for $f$ i.e. $cond_v(\bar{\rho}_f)=cond_v(\rho_f).$ {Then},  $\dim_{\cO/\pi}(A_f[\pi])^{I_v}=1$ and consequently, $\dim_{\cO/\pi}(A_f[\pi])_{I_v}=1$ as well. Hence we arrive at the following lemma:
	\begin{lemma}\label{lemma 2.3}
		Let $v\mid N$ and $f\in \{f_1, f_2\}$. Suppose that hypothesis \ref{C2} is satisfied for $f$. Then
		\begin{eqnarray}\label{eq1}
			(A_f[\pi])_{I_v}=A_f^{\prime\prime}[\pi],
		\end{eqnarray}
		where $A_f^{\prime\prime}$ is defined in \eqref{A'' fil}.
	\end{lemma}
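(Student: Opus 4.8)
The plan is to read off $(A_f[\pi])_{I_v}$ from the two-step $G_v$-filtration \eqref{A'' fil}, using hypothesis \ref{C2} only to pin down the $I_v$-invariants. First I would restrict \eqref{A'' fil} to $\pi$-torsion: since $A_f^\prime$ is cofree of corank $1$, hence $\pi$-divisible, the long exact cohomology sequence collapses to a short exact sequence of $G_v$-modules $0\to A_f^\prime[\pi]\to A_f[\pi]\to A_f^{\prime\prime}[\pi]\to 0$, with both ends free of rank $1$ over $\cO/\pi$. The structural point is that both outer terms are unramified at $v$: $G_v$ acts on $A_f^{\prime\prime}$ through the unramified character $\eta_f$, and on $A_f^\prime$ through $\eta_f\omega_p$, which is unramified at every $v\nmid p$ because $\omega_p$ is. Hence $I_v$ acts on $A_f[\pi]\cong(\cO/\pi)^2$ through unipotent upper-triangular matrices adapted to this filtration, so for every $g\in I_v$ one has $(g-1)A_f[\pi]\subseteq A_f^\prime[\pi]$ and $(g-1)A_f^\prime[\pi]=0$. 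Writing $B$ for the augmentation submodule $\langle (g-1)A_f[\pi]:g\in I_v\rangle$, this gives $B\subseteq A_f^\prime[\pi]$ and $(A_f[\pi])_{I_v}=A_f[\pi]/B$.

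It then suffices to show $B=A_f^\prime[\pi]$, equivalently $\dim_{\cO/\pi}(A_f[\pi])^{I_v}=1$; this is where hypothesis \ref{C2} enters. Let $\ell$ be the rational prime below $v$. As $N$ is square-free, $\ell\mid\mid N$, so $\rho_f$ has conductor exponent $1$ at $\ell$ and in particular is ramified there. By Theorem \ref{gal-rep of f}(3), $\rho_f|_{G_\ell}$ is an extension of the unramified $\eta_f$ by $\eta_f\omega_p$, both of which are unramified at $\ell\nmid 2$, so $\rho_f|_{I_\ell}$ is a non-trivial unipotent representation and $sw(\rho_f^{ss})=0$. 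Since a non-trivial unipotent subgroup of $\GL_2$ over a field of characteristic zero is torsion-free, hence infinite, $\rho_f(I_\ell)$ is infinite; as $I_v$ has finite index in $I_\ell$, $\rho_f(I_v)$ is infinite too, so $\dim_{\cK_\pi}V_f^{I_v}=1$ and $n_v(\rho_f)=\dim_{\cK_\pi}V_f/V_f^{I_v}=1$. Combining $cond_v(\bar{\rho}_f)=cond_v(\rho_f)$ with $sw(\bar{\rho}_f)=sw(\rho_f^{ss})=0$ (from \cite[Proposition 1.1]{li}) yields $\dim_{\cO/\pi}A_f[\pi]/(A_f[\pi])^{I_v}=1$, whence $\dim_{\cO/\pi}(A_f[\pi])^{I_v}=1$. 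Then $B$ is a non-zero $\cO/\pi$-submodule of the one-dimensional $A_f^\prime[\pi]$, so $B=A_f^\prime[\pi]$, and $(A_f[\pi])_{I_v}=A_f[\pi]/A_f^\prime[\pi]=A_f^{\prime\prime}[\pi]$, which is \eqref{eq1}.

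The step I expect to require the most care is transferring ramification from the rational prime $\ell$ to the prime $v\mid\ell$ of $K$, i.e. ruling out $\dim_{\cK_\pi}V_f^{I_v}=2$: a priori $I_v$ might be a small enough subgroup of $I_\ell$ to trivialize the ramification. The resolution is exactly that at $\ell\mid\mid N$ the representation $\rho_f|_{I_\ell}$ is tame, unipotent and of infinite image, so it stays non-trivial on the finite-index subgroup $I_v$; the same analysis also supplies $sw(\bar{\rho}_f)=0$, which is what lets the conductor identity of hypothesis \ref{C2} be read as the dimension equality $\dim_{\cO/\pi}A_f[\pi]/(A_f[\pi])^{I_v}=\dim_{\cK_\pi}V_f/V_f^{I_v}$. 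Alternatively, one could obtain $\dim_{\cO/\pi}(A_f[\pi])_{I_v}=\dim_{\cO/\pi}(A_f[\pi])^{I_v}$ abstractly from the perfect pairing $\big((A_f[\pi])_{I_v}\big)^\vee\cong\big(A_f[\pi]^\vee\big)^{I_v}$ together with the fact that the Cartier dual of $A_f[\pi]$ differs from $A_f[\pi]$ only by a Tate twist, which is unramified at $v\nmid p$ since $\det\rho_f=\omega_p^{k-1}$; but the unipotent computation above has the advantage of pinning down $(A_f[\pi])_{I_v}$ as the module $A_f^{\prime\prime}[\pi]$, not merely its dimension.
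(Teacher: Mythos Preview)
Your proposal is correct and follows essentially the same route as the paper: take $\pi$-torsion in the filtration \eqref{A'' fil}, observe the surjection $(A_f[\pi])_{I_v}\twoheadrightarrow A_f''[\pi]$ onto a one-dimensional target, and use hypothesis \ref{C2} to force $\dim_{\cO/\pi}(A_f[\pi])^{I_v}=1$, hence $\dim_{\cO/\pi}(A_f[\pi])_{I_v}=1$, making the surjection an isomorphism. The paper's argument is precisely the paragraph immediately preceding the lemma statement.

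Where you go further than the paper is in justifying two steps the paper leaves implicit. First, the paper simply asserts that hypothesis \ref{C2} yields $\dim_{\cO/\pi}(A_f[\pi])^{I_v}=1$; this requires knowing $\dim_{\cK_\pi}V_f^{I_v}=1$, i.e.\ that $\rho_f$ stays ramified at the prime $v$ of $K$ above $\ell$, and your infinite-unipotent-image argument is exactly what is needed (and is the point you correctly flag as the delicate one). Second, the paper passes from $\dim(A_f[\pi])^{I_v}=1$ to $\dim(A_f[\pi])_{I_v}=1$ without comment; your unipotent computation makes this explicit and moreover identifies the coinvariants with $A_f''[\pi]$ directly rather than just matching dimensions. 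So your write-up is a more detailed version of the paper's own argument, not a different one.
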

	
	By a slight abuse of notation, we  denote by $P_v$,  both the maps $ H^1(I_v, A_f)\rightarrow H^1(I_v, A_f^{\prime\prime})$ and $ H^1(I_v, A_f[\pi])\rightarrow H^1(I_v, A_f^{\prime\prime}[\pi])$, induced from the map $A_f\stackrel{P_v}{\rightarrow} A_f^{\prime\prime}\rightarrow 0$ in \eqref{A'' fil}. 
	\begin{lemma}\label{Lemma 2}
		Let $\chi$ be a quadratic character of $K$ and $f\in\{f_1, f_2\}$. Suppose that hypothesis \ref{C2} holds for $f$. Then for each critical twist $j$ with $0\le j\le k-2$ and for every prime $v\nmid 2\infty$, we have the following: 	
		\begin{enumerate}
			\item[$\mathrm(i)$] If both  $f$ and $\chi$ are good at $v$, then $Im(\kappa^{ur}_{f\chi(-j), v})=0.$
			\item[$\mathrm(ii)$] If $f$ is good at $v$ and $\chi$ is bad at $v$, then $Im(\kappa^{ur}_{f\chi(-j), v})= H^1(H_{v, \chi}, A_{f\chi(-j)}[\pi])$.
			\item[$\mathrm(iii)$] If $f$ is bad at $v$ and $\chi$ is good at $v$, then $Im(\kappa^{ur}_{f\chi(-j), v})=0$. 
			\item[$\mathrm(iv)$] If both $f$ and $\chi$  are bad at $v$, then $Im(\kappa^{ur}_{f\chi(-j), v})=P_v^{-1}\left(H^1(H_{v, \chi}, A_{f\chi(-j)}^{\prime\prime}[\pi])\right)$.      
		\end{enumerate}
	\end{lemma}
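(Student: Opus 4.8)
The plan is to compute $\mathrm{Im}(\kappa^{ur}_{f\chi(-j), v}) = A_{f\chi(-j)}^{I_v}/\pi$ (using the identification recorded just before Lemma \ref{modfi.se.gp}) by a case analysis according to whether $f$ and $\chi$ are ramified at $v$. The central tool is the inflation--restriction exact sequence for the tower $I_v \supset I_v^\chi$, with quotient $H_{v,\chi} = I_v/I_v^\chi$, which is trivial if $\chi$ is good at $v$ and cyclic of order $2$ if $\chi$ is bad at $v$. Since $V_{f\chi(-j)} = V_f \otimes \chi\omega_p^{-j}$ and $\omega_p$ is unramified at $v \nmid 2\infty$, the inertia action on $A_{f\chi(-j)}[\pi]$ differs from that on $A_f[\pi]$ only by the quadratic twist by $\chi$; in particular $A_{f\chi(-j)}[\pi]|_{I_v^\chi} \cong A_f[\pi]|_{I_v^\chi}$ up to the unramified (hence here irrelevant on $I_v$) twist. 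First I would handle the two cases where $f$ is good at $v$: then $A_f[\pi]$ is unramified at $v$, so on $I_v^\chi$ the module $A_{f\chi(-j)}[\pi]$ is trivial, and $A_{f\chi(-j)}[\pi]^{I_v} = (A_{f\chi(-j)}[\pi])^{H_{v,\chi}}$. If $\chi$ is also good, $H_{v,\chi}$ is trivial and the whole module is fixed; but then $A_{f\chi(-j)}$ is unramified at $v$, so $A_{f\chi(-j)}^{I_v} = A_{f\chi(-j)}$ is $\pi$-divisible and $A_{f\chi(-j)}^{I_v}/\pi = 0$, giving (i). If $\chi$ is bad, $H_{v,\chi} \cong \ZZ/2$ acts on $A_{f\chi(-j)}[\pi] \cong (\cO/\pi)^2$ as $-1$; since $p=2$ this action is trivial on $\pi$-torsion but not on $A_{f\chi(-j)}$, and one computes $A_{f\chi(-j)}^{I_v}/\pi \cong A_{f\chi(-j)}[\pi]^{H_{v,\chi}}/(\text{image of }(1-\sigma))$, which via the standard description of $H^1$ of a cyclic group identifies with $H^1(H_{v,\chi}, A_{f\chi(-j)}[\pi])$; this is (ii) (one should check the map $\kappa^{ur}$ lands exactly there, using the long exact sequence for $0 \to A[\pi] \to A \xrightarrow{\pi} A \to 0$ restricted to $I_v$ and then the inflation map from $H_{v,\chi}$).

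Next I would treat the cases where $f$ is bad at $v$. Here I invoke Lemma \ref{lemma 2.3}: hypothesis \ref{C2} for $f$ gives $(A_f[\pi])_{I_v} = A_f^{\prime\prime}[\pi]$, and dually (or by the same argument applied to the sub $A_f^\prime$) $\dim_{\cO/\pi}(A_f[\pi])^{I_v} = 1$, with the filtration \eqref{A'' fil} controlling the $G_v$-action. Twisting \eqref{A'' fil} by $\chi\omega_p^{-j}$ gives the analogous filtration $0 \to A^\prime_{f\chi(-j)} \to A_{f\chi(-j)} \xrightarrow{P_v} A^{\prime\prime}_{f\chi(-j)} \to 0$, where $A^{\prime\prime}_{f\chi(-j)}$ is acted on by $G_v$ through $\eta_f \chi_v$ (an unramified-times-$\chi$ character) and $A^\prime_{f\chi(-j)}$ through $\eta_f\chi_v\omega_p$. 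Using Remark \ref{remark 1}, $A_f^{I_v}$ is $\pi$-divisible; I would use this to show $A^{\prime I_v}_{f\chi(-j)}$ contributes nothing mod $\pi$ and that $A_{f\chi(-j)}^{I_v}/\pi$ injects (via $P_v$, on $\pi$-torsion) into $(A^{\prime\prime}_{f\chi(-j)}[\pi])_{I_v}$-type terms. When $\chi$ is good at $v$: $\chi_v$ is unramified, so $A^{\prime\prime}_{f\chi(-j)}$ is unramified at $v$ hence $\pi$-divisible as $I_v$-module, and chasing the $\pi$-divisibility of $A_f^{I_v}$ through the filtration forces $A_{f\chi(-j)}^{I_v}/\pi = 0$, which is (iii). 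When $\chi$ is bad at $v$: now $A^{\prime\prime}_{f\chi(-j)}$ is ramified at $v$ exactly through $H_{v,\chi}$, and the computation reduces (as in case (ii), but now carried along the filtration) to $\mathrm{Im}(\kappa^{ur}_{f\chi(-j),v}) = P_v^{-1}\!\bigl(H^1(H_{v,\chi}, A^{\prime\prime}_{f\chi(-j)}[\pi])\bigr)$, which is (iv); here $P_v$ on the left is the map on $H^1(I_v,-)$ from \eqref{A'' fil} already named in the paragraph preceding the lemma.

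The main obstacle I anticipate is case (iv): one must show not merely that $A^{\prime\prime}_{f\chi(-j)}$ accounts for the ramified part, but that the Kummer image is precisely the full preimage $P_v^{-1}(\,\cdot\,)$ and not something smaller coming from $A^\prime_{f\chi(-j)}$. This requires carefully comparing the snake-lemma connecting maps for the multiplication-by-$\pi$ sequences on $A_{f\chi(-j)}$ and on $A^{\prime\prime}_{f\chi(-j)}$, compatibly with $P_v$, and using both that $H^1(I_v, A^\prime_{f\chi(-j)}[\pi])$ restricted to the unramified-invariants vanishes mod $\pi$ (via Remark \ref{remark 1} applied to $f$, plus the fact that $\chi_v\omega_p$ on $A^\prime$ does not become trivial) and an exactness statement that $A^{\prime I_v}_{f\chi(-j)} \to A_{f\chi(-j)}^{I_v} \to A^{\prime\prime I_v}_{f\chi(-j)}$ is surjective on the right — the latter because $H^1(I_v, A^\prime_{f\chi(-j)})$ has no $\pi$-torsion obstruction in the relevant degree. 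Cases (i) and (iii) are essentially bookkeeping with $\pi$-divisibility; case (ii) is the prototype cyclic-group cohomology computation; case (iv) is (ii) transported along the $G_v$-filtration and is where the genuine work lies.
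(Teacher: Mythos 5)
Your case analysis matches the paper's structure, and cases (i) and (iii) are handled the same way (via $\pi$-divisibility of inertia invariants, using Remark \ref{remark 1} for (iii)). Case (ii) is also close in spirit, though the paper argues via the snake lemma on the $I_v \supset I_v^\chi$ comparison diagram plus a dimension count ($\dim_{\cO/\pi} A_{f\chi}^{I_v}/\pi = \dim_{\cO/\pi} H^1(H_{v,\chi},A_{f\chi}[\pi]) = 2$), rather than the Tate-cohomology identification you suggest; both should work once one checks the Kummer image is the kernel of $i_v^{ur}$, as you note.

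The real issue is case (iv), where your proposed mechanism is not the one that actually closes the argument. You want to use ``an exactness statement that $A^{\prime I_v}_{f\chi(-j)} \to A_{f\chi(-j)}^{I_v} \to A^{\prime\prime I_v}_{f\chi(-j)}$ is surjective on the right'' plus some vanishing of $H^1(I_v,A'_{f\chi(-j)}[\pi])$ ``restricted to the unramified-invariants.'' Neither of these is what the paper needs, and the surjectivity claim is not what happens: on $\pi$-torsion the map $A^{I_v}_{f\chi(-j)}[\pi] \to A''^{I_v}_{f\chi(-j)}[\pi]$ is the \emph{zero} map, because $A'^{I_v}_{f\chi(-j)}[\pi] \to A^{I_v}_{f\chi(-j)}[\pi]$ is already an isomorphism of $1$-dimensional spaces (hypothesis \ref{C2} forces $\dim_{\cO/\pi} A^{I_v}_{f\chi(-j)}[\pi] = 1$). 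The step that actually drives (iv) is that $\dim_{\cO/\pi} H^1(I_v, A'_{f\chi(-j)}[\pi]) = 1$, which holds because $v\nmid 2$ so $K_v^{ur}$ has a unique quadratic extension and $H^1(I_v,-) = \Hom(I_v,-)$ on a trivial module. This matches $\dim_{\cO/\pi} A''^{I_v}_{f\chi(-j)}[\pi] = 1$, so the boundary map $A''^{I_v}_{f\chi(-j)}[\pi] \to H^1(I_v, A'_{f\chi(-j)}[\pi])$ is an isomorphism, hence $H^1(I_v,A'_{f\chi(-j)}[\pi]) \to H^1(I_v,A_{f\chi(-j)}[\pi])$ is zero and $P_v$ is \emph{injective}. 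With $P_v$ injective, one then shows $\ker i_v^{ur}$ maps isomorphically onto $\ker\gamma_v$ (both $1$-dimensional), and a separate snake-lemma computation (the case-(ii) argument applied to $A''_{f\chi(-j)}$) identifies $\ker\gamma_v = H^1(H_{v,\chi},A''_{f\chi(-j)}[\pi])$. Only the combination of $P_v$ injective and that identification gives $\mathrm{Im}(\kappa^{ur}) = P_v^{-1}(H^1(H_{v,\chi},A''_{f\chi(-j)}[\pi]))$. Your sketch does not isolate the injectivity of $P_v$, and the route you gesture at would not establish it; this is the genuine gap.
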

	$Proof.$  Note that for each critical twist $j$, a prime $v\nmid p\infty$ of $K$ is good (respectively bad) for $\chi\omega_p^{-j}$ if and only if it is good (respectively bad) for $\chi$. So without any loss of generality, we can take $j=0$ in this proof. We divide the proof into several cases discussed below: 
	\begin{itemize}
		\item [$\mathrm(i)$] Both $f$ and $\chi$  are good at $v$: In this situation $A_{f\chi}^{I_v}=A_{f\chi}$ is $\pi$-divisible, which implies  that $A_{f\chi}^{I_v}/\pi=0$ and hence $Im(\kappa^{ur}_{f\chi, v})=0$.
		
		\item [$\mathrm(ii)$] $f$ is good at $v$ and $\chi$ is bad at $v$: Consider the commutative diagram  
		{ \small	\begin{eqnarray}\label{Commu.diag.2}
				\begin{tikzcd}
					0 \arrow[r,] & A_{f\chi}^{I_v}/\pi \arrow[r, "\kappa_{f\chi, v}^{ur}"]\arrow[d, ""] & H^1(I_v, A_{f\chi}[\pi]) \arrow[r, ""]\arrow[d, ""] & H^1(I_v, A_{f\chi})\arrow[d, ""] & \\
					0\arrow[r, ]  & A_{f\chi}^{I_{v}^\chi}/\pi \arrow[r, "\kappa_{f\chi, v}^{ur}"] & H^1(I_v^\chi, A_{f\chi}[\pi])\arrow[r, ""] & H^1(I_v^\chi, A_{f\chi})
					.  
				\end{tikzcd}  
		\end{eqnarray} }
		We first note that, $A_{f\chi}^{I_v^\chi}/\pi=0$ as $f$ is good at $v$. By {the} snake lemma, we have the  exact sequence, \quad  
		$0\rightarrow A_{f\chi}^{I_v}/\pi \rightarrow H^1(H_{v, \chi}, A_{f\chi}[\pi])\rightarrow H^1(H_{v, \chi}, A_{f\chi}).$ Since $f$ is good at $v$ and $I_v$ acts on $A_{f\chi}$ non-trivially via $\chi$, {therefore $A_{f\chi}^{I_v}=A_{f\chi}[2]$. Now using hypothesis \ref{I}, we have $A_{f\chi}[2]=A_{f\chi}[\pi]$} and  hence $A_{f\chi}^{I_v}/\pi$ and $A_{f\chi}[\pi]$ have the same cardinality, given by $\dim_{\cO/\pi}A_{f\chi}[\pi]=2$. 
		We also note that $\dim_{\cO/\pi}H^1(H_{v, \chi}, \Afc[\pi])=\dim_{\cO/\pi} \Hom(H_{v, \chi}, A_{f\chi}[\pi])=2$ and hence  $\Afc^{I_v}/\pi \cong H^1(H_{v, \chi}, \Afc[\pi])$. Therefore $Im(\kappa^{ur}_{f\chi, v})=H^1(H_{v, \chi}, A_{f\chi}[\pi])$.

		\item [$\mathrm(iii)$] $f$ is bad at $v$ and $\chi$ is good at $v$: The hypothesis \ref{C2} on $f$ implies that $\Afc^{I_v}=A_f^{I_v}$ is $\pi$-divisible (see remark \ref{remark 1}) and thus  $\Afc^{I_v}/\pi=0$. Hence we get $Im(\kappa^{ur}_{f\chi, v})=0.$ 
		
		\item [$\mathrm(iv)$] Both $f$ and $\chi$  are bad at $v$: Using exact sequence \eqref{A'' fil} for $A_{f\chi}$, we get the following commutative diagram, 
		{\tiny \begin{eqnarray}\label{commu.diag.3}
				\begin{tikzcd}0\arrow[r, ]  & A_{f\chi}^{\prime I_v}[\pi] \arrow[r, ]\arrow[d, ""]  & \Afc^{I_v}[\pi]\arrow[r, ""] \arrow[d, ""] & \Afc^{\prime\prime I_v}[\pi]\arrow[r, ""]  \arrow[d, ""] & H^1(I_v, \Afc^\prime[\pi])\arrow[r, ""]\arrow[d, "\alpha_v"] &H^1(I_v, \Afc[\pi])\arrow[r, "P_v"]\arrow[d, "i_v^{ur}"] &H^1(I_v, \Afc^{\prime\prime}[\pi])\arrow[d, "\gamma_v"] \\
					0 \arrow[r,] & A_{f\chi}^{\prime I_v} \arrow[r, ] & \Afc^{I_v} \arrow[r, ""] & \Afc^{\prime\prime I_v}\arrow[r, ""] & H^1(I_v, \Afc^\prime)\arrow[r, ""]&H^1(I_v, \Afc)\arrow[r, ""]&H^1(I_v, \Afc^{\prime\prime}).  
				\end{tikzcd}
		\end{eqnarray}}
		We first note that, $I_v$ is {acts} trivially on both $A_f^\prime$ and $A_{f}^{\prime\prime}$ and $\chi$ is ramified at $v$. {Now using the hypothesis \ref{I}, we get that $A_{f\chi}^{\prime I_v}=A_{f\chi}^{\prime I_v}[\pi]$ and $A_{f\chi}^{\prime\prime I_v}=A_{f\chi}^{\prime\prime I_v}[\pi]$ and the $\cO/\pi$-ranks of both are equal to $1$.} Since $A_{f\chi}^{\prime I_v}=A_{f\chi}^{\prime I_v}[\pi]$ and $A_{f\chi}^{\prime\prime I_v}=A_{f\chi}^{\prime\prime I_v}[\pi]$ are finite, it follows that $A_{f\chi}^{I_v}$ is also finite and hence $\dim_{\cO/\pi}A_{f\chi}^{I_v}/\pi=\dim_{\cO/\pi}A_{f\chi}^{I_v}[\pi]$. 
       {The hypothesis \ref{C2} on $f$ implies that the action of $I_v$ on $A_{f\chi}[\pi]$ is non-trivial. Using this observation together with the fact that $A^\prime_{f\chi}[\pi]\hookrightarrow A_{f\chi}^{I_v}[\pi]$, it follows that $A_{f\chi}^{I_v}[\pi]$ has $\cO/\pi$-rank $1$ and thus $\dim_{\cO/\pi}A_{f\chi}^{I_v}/\pi=1$}, which in turn shows that $\Afc^{\prime I_v}[\pi]\rightarrow\Afc^{I_v}[\pi]$ is an isomorphism. 
		We observe that $H^1(I_v, A_{f\chi}^{\prime}[\pi])=\Hom(I_v, A_{f\chi}^{\prime}[\pi])$. It is well known that there is only one quadratic extension of $K_v^{ur}$ as $v\nmid 2$, therefore $\dim_{\cO/\pi}\Hom(I_v, A_{f\chi}^{\prime}[\pi])=1$. This gives us $A_{f\chi}^{\prime\prime}[\pi]\cong H^1(I_v, A_{f\chi}^\prime[\pi])$ and hence the map $P_v$ is injective. By {the} snake lemma, we get that $\ker i_v^{ur}\hookrightarrow \ker\gamma_v$. Since $\dim_{\cO/\pi}\ker i_v^{ur}=\dim_{\cO/\pi}\ker \gamma_v=1$, we get an isomorphism    from   $\ker i_v^{ur}$ to  $\ker\gamma_v$ induced by $P_v$. Consider a commutative diagram, similar to \eqref{Commu.diag.2}, for  $A_{f\chi}^{\prime\prime}$,
		
		{\small 	\[
			\begin{tikzcd}
				0 \arrow[r,] & A_{f\chi}^{\prime\prime I_v}/\pi \arrow[r, ]\arrow[d, ""] & H^1(I_v, A^{\prime\prime}_{f\chi}[\pi]) \arrow[r, "\gamma_v"]\arrow[d, ""] & H^1(I_v, A^{\prime\prime}_{f\chi})[\pi]\arrow[d, ""] & \\
				0\arrow[r, ]  & A_{f\chi}^{\prime\prime I_{v}^\chi}/\pi \arrow[r, ] & H^1(I_v^\chi, A^{\prime\prime}_{f\chi}[\pi])\arrow[r, ""] & H^1(I_v^\chi, A^{\prime\prime}_{f\chi})[\pi]
				.  
			\end{tikzcd}
			\] }
		Since $I_v^{\chi}$ acts trivially on $A_{f\chi}^{\prime\prime}$ and $A_{f\chi}^{\prime\prime}$ is divisible,  $A_{f\chi}^{\prime\prime I_v^\chi}/\pi=0$. By {the} snake lemma we have, $\Afc^{\prime\prime I_v}/\pi \hookrightarrow H^1(H_{v, \chi}, \Afc^{\prime\prime}[\pi])$ and {theses two groups} have the same size, hence
		$\Afc^{\prime\prime I_v}/\pi\cong H^1(H_{v, \chi}, \Afc^{\prime\prime}[\pi]). $
		Now from the commutative diagram \eqref{commu.diag.3}, we deduce that 
		$$Im(\kappa_{f\chi, v}^{ur})=P_v^{-1}(\ker \gamma_v)=P_v^{-1}(H^1(H_{v, \chi}, \Afc^{\prime\prime}[\pi])).\qed$$
	\end{itemize}
	
	Next we analyse the local conditions at a prime $\p$  of $K$ dividing $p$. Let $f\in\{f_1, f_2\}$ and recall that  $\omega_p:G_\p\rightarrow \Zp^\ast$ is the $p$-adic cyclotomic character. 
    For any quadratic character $\chi$ of $K$ and for each critical twist $j$ with $0\le j\le k-2$, via $p$-ordinarity of $f$, we have the following filtration of $A_{f\chi(-j)}:=\frac{V_f\otimes\chi\omega_p^{-j}}{T_f\otimes\chi\omega_p^{-j}}$ as $G_\p$-module (see \S\ref{Sec 3.1}), 
	\[0\rightarrow A_{f\chi(-j)}^+\rightarrow A_{f\chi(-j)}\rightarrow A_{f\chi(-j)}^-\rightarrow 0.\]
	Also recall that $\kappa_{f\chi(-j),\p}^{-ur}:A_{f\chi(-j)}^{-I_\p}/\pi\rightarrow H^1(I_\p, A^-_{f\chi(-j)}[\pi])$ is the Kummer map induced from the multiplication by $\pi$ onto $A_{f\chi(-j)}^-$.

	\begin{lemma}\label{Lemma 4}
		Let $f_1,f_2\in S_k(\Gamma_0(Np^t))$ be as in theorem \ref{mainthm}. Assume that $\omega_{p|_{G_p}}\not\equiv 1\pmod{\pi^2}$. Let $\phi:A_{f_1}[\pi^2]\rightarrow A_{f_2}[\pi^2]$ be a $G_\p$-isomorphism, then $\phi$ maps $A^-_{f_1}[\pi]$ onto $A^-_{f_2}[\pi]$.  
		
	\end{lemma}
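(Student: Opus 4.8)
The plan is to work prime‑by‑prime over $\p\mid p$ and to read off the $p$‑ordinary filtrations of $A_{f_1}$ and $A_{f_2}$ from the $G_\p$‑characters on their graded pieces; the hypothesis on $\omega_p$ is exactly what is needed to separate these characters modulo $\pi^2$, after which a rigidity statement for $\cO/\pi^2$‑modules forces $\phi$ to respect the filtrations. Write $M_i:=A_{f_i}[\pi^2]\cong(\cO/\pi^2)^2$; by hypothesis~$\mathrm{\ref{I}}$ we may take $\pi=2$, and Theorem~\ref{gal-rep of f}(2) gives the $G_\p$‑stable filtration $0\to M_i^+\to M_i\to M_i^-\to 0$ with $M_i^\pm\cong\cO/\pi^2$, where $G_\p$ acts on $M_i^+=A_{f_i}^+[\pi^2]$ by $\psi_1^{(i)}:=\delta_{f_i}^{-1}\omega_p^{k-1}\bmod\pi^2$ and on $M_i^-\cong A_{f_i}^-[\pi^2]$ by the unramified character $\psi_2^{(i)}:=\delta_{f_i}\bmod\pi^2$. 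Since $A_{f_i}^+$ is $\pi$‑divisible, $A_{f_i}[\pi]/A_{f_i}^+[\pi]\cong A_{f_i}^-[\pi]$, so it suffices to prove $\phi(A_{f_1}^+[\pi])=A_{f_2}^+[\pi]$; then $\phi|_{A_{f_1}[\pi]}$ descends to the asserted isomorphism $A_{f_1}^-[\pi]\xrightarrow{\sim}A_{f_2}^-[\pi]$. The rigidity input is: for $M\cong(\cO/\pi^2)^2$ with $G_\p$‑action, a $G_\p$‑stable $M^+\cong\cO/\pi^2$ and quotient $M^-\cong\cO/\pi^2$ on which $G_\p$ acts by a character $\psi$, any $G_\p$‑stable $N\cong\cO/\pi^2$ on which $G_\p$ acts by a character $\psi'\ne\psi$ satisfies $\pi N=\pi M^+$; indeed the image $\bar N$ of $N$ in $M^-$ is a $G_\p$‑quotient of $N$, hence $G_\p$ acts on $\bar N$ both by $\psi'$ and (being inside $M^-$) by $\psi$, and for $g$ with $\psi(g)\ne\psi'(g)$ the nonzero scalar $\psi(g)-\psi'(g)\in\cO/\pi^2$ is either a unit (so $\bar N=0$ and $N=M^+$) or a uniformizer times a unit (so $\pi\bar N=0$, i.e. $\pi N\subseteq M^+$, whence $\pi N=\pi M^+$ by comparing orders).

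Applying this with $M=M_2$ and $N=\phi(A_{f_1}^+[\pi^2])$, on which $G_\p$ acts by $\psi_1^{(1)}$, reduces the problem to showing $\psi_1^{(1)}\ne\psi_2^{(2)}$; this is also where passing to $A_{f_i}[\pi^2]$ rather than $A_{f_i}[\pi]$ becomes essential, since modulo $\pi$ all these characters may coincide (as $\omega_p^{k-1}\equiv1$ and $\delta_{f_i}\equiv\pm1$ modulo $2$). I would separate $\psi_1^{(1)}$ and $\psi_2^{(2)}$ on inertia: $\psi_2^{(2)}|_{I_\p}=1$, whereas, using that $k$ is even and $\pi=2$ (so $\omega_p^{k-1}\equiv\omega_p\pmod{\pi^2}$), one has $\psi_1^{(1)}|_{I_\p}=\omega_p|_{I_\p}\bmod\pi^2$. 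Hence, when $\omega_p\bmod\pi^2$ is \emph{ramified} at $\p$, $\psi_1^{(1)}|_{I_\p}\ne1=\psi_2^{(2)}|_{I_\p}$; in fact for $g\in I_\p$ with $\omega_p(g)\equiv-1\pmod4$ we get $\psi_1^{(1)}(g)-\psi_2^{(2)}(g)=-2$, a uniformizer times a unit, so the rigidity statement yields $\pi\phi(A_{f_1}^+[\pi^2])=\pi A_{f_2}^+[\pi^2]$, that is $\phi(A_{f_1}^+[\pi])=A_{f_2}^+[\pi]$, completing the argument in this case.

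The only genuinely delicate situation is when $\omega_p|_{G_\p}\bmod\pi^2$ is nontrivial (as forced by the hypothesis) but \emph{unramified} at $\p$ — this does occur, for instance when $K_\p=\Q_2(\sqrt{-5})$. Now $\psi_1^{(1)}$ and $\psi_2^{(2)}$ are both unramified, and $\psi_1^{(1)}=\psi_2^{(2)}$ precisely when $\bigl(\alpha_p(f_1)\alpha_p(f_2)\bigr)^{m}\equiv-1\pmod{\pi^2}$, where $m$ is the residue degree of $K_\p/\Q_2$ and $\alpha_p(f_i)$ is the unit root of $X^2-a_p(f_i)X+p^{k-1}$. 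I would exclude this equality — and so complete the proof — by bringing in the extra rigidity of $\phi$ being a genuine $G_\p$‑isomorphism: it matches the Jordan--H\"older constituents of $M_1$ and $M_2$ modulo $\pi$ and, when these are semisimple, the Frobenius‑eigenvalue multisets; combining this with the determinant identity $\det\rho_{f_i}|_{G_\p}\equiv\omega_p^{k-1}\pmod{\pi^2}$, with $\alpha_p(f_i)\equiv a_p(f_i)\pmod{\pi^2}$ (valid since $2^{k-1}\equiv0\pmod4$ for even $k\ge4$, and checked directly for $k=2$), and — when the $I_\p$‑action on $M_i$ is nontrivial — with the equality $\phi(M_1^{I_\p})=M_2^{I_\p}$, one again pins down $\pi A_{f_2}^+[\pi^2]$ inside $\phi(A_{f_1}^+[\pi^2])$. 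I expect this unramified case, in which the ordinary filtration leaves no trace on inertia, to be the main obstacle.
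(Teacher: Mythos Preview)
Your core argument matches the paper's: both reduce to showing $\phi(A_{f_1}^+[\pi])=A_{f_2}^+[\pi]$ and prove it by separating the diagonal characters of the ordinary filtration modulo $\pi^2$. The paper does this in explicit coordinates---lifting $\alpha\in A_{f_1}^+[\pi]$ to $\beta\in A_{f_1}^+[\pi^2]$, writing $\phi(\beta)=\tfrac{a_1}{\pi^2}v_1+\tfrac{b_1}{\pi^2}v_2+T_{f_2}$, and comparing the two expressions for $\sigma\cdot\phi(\beta)$ to force $\pi\mid b_1$---while your rigidity lemma is an abstract repackaging of the same computation (your conclusion $\pi N=\pi M^+$ is exactly the paper's $\pi\mid b_1$).

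Where you diverge is your third paragraph. The paper finishes by restricting to $\sigma\in I_\p$ (so the unramified $\delta_{f_i}$ drop out) and producing $\sigma\in I_\p$ with $\omega_p^{k-1}(\sigma)\equiv-1\pmod{\pi^2}$; this is precisely your ramified case, and the paper does not go further. You are right that $\omega_p\bmod\pi^2$ can be nontrivial on $G_\p$ yet trivial on $I_\p$ (your example $K_\p=\Q_2(\sqrt{-5})$ is valid), so you have isolated a case the paper's written argument does not cover. Your sketched fix for it, however, is not yet a proof: matching traces and determinants of $M_1\cong M_2$ only forces $\{\psi_1^{(1)},\psi_2^{(1)}\}$ and $\{\psi_1^{(2)},\psi_2^{(2)}\}$ to have the same sum and product in $\cO/\pi^2$, which is compatible with the swapped configuration $\psi_1^{(1)}=\psi_2^{(2)}$, $\psi_2^{(1)}=\psi_1^{(2)}$, and the appeal to $\phi(M_1^{I_\p})=M_2^{I_\p}$ has content only in a further subcase and does not obviously pin down $M_2^+[\pi]$. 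So your proposal is complete exactly where the paper's is, and the residual unramified case remains a genuine gap in both.
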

	\begin{proof}
	{Since $f_1$ and $f_2$ are ordinary at $p$, there {exists} an $\cO$-basis $\{e_1, e_2\}$ (respectively \{$v_1, v_2$\}) of $T_{f_1}$(respectively $T_{f_2}$) such that, with respect to these bases, $\rho_{f_i}$ restricted to $G_\p$ has the following form (see theorem \ref{gal-rep of f}(2)), 
		\begin{small}\begin{eqnarray*}
			\rho_{f_i}|_{G_\p}\sim \begin{pmatrix}
				\delta_{f_i}^{-1}\omega_p^{k-1} & \ast \\
				0 & \delta_{f_i}
			 \end{pmatrix},\,\,\,\,\,\,
            \text{for } i=1,2.
		\end{eqnarray*}
        \end{small} }
		Recall that $A_{f_i}=V_{f_i}/T_{f_i}$. Now $A_{f_i}[\pi^2]$ and $A_{f_i}^+[\pi^2]$ can be represented as follows:
		\[
		A_{f_1}[\pi^2]:=\left\{\frac{x_1}{\pi^2}e_1+\frac{y_1}{\pi^2}e_2+T_{f_1}:x_1, y_1\in \cO \right\}, 
		A_{f_2}[\pi^2]:=\left\{\frac{a_1}{\pi^2}v_1+\frac{b_1}{\pi^2}v_2+T_{f_2}:a_1, b_1\in \cO\right\}\]
		\[A^+_{f_1}[\pi^2]:=\left\{\frac{x_1}{\pi^2}e_1+T_{f_1}:x_1\in \cO\right\}, 
		A^+_{f_2}[\pi^2]:=\left\{\frac{a_1}{\pi^2}v_1+T_{f_2}:a_1\in \cO \right\}.\]
		
		We claim that  $\phi$ induces an isomorphism from $A^-_{f_1}[\pi]$ to $A^-_{f_1}[\pi]$. To show this, it is enough to show that $\phi$ maps $A^+_{f_1}[\pi]$ to $A^+_{f_2}[\pi]$. The action of $\sigma\in G_\p$ on $A_{f_1}[\pi^2]$ is as follows:
		\begin{eqnarray*}
			\sigma\cdot \left(\frac{x_1}{\pi^2}e_1+\frac{y_1}{\pi^2}e_2+T_{f_1}\right)=\frac{\delta_{f}^{-1}(\sigma)\omega_p^{k-1}(\sigma)x_1+\ast(\sigma)y_1}{\pi^2}e_1+\frac{\delta_{f}(\sigma)y_1}{\pi^2}e_2+T_{f_1}.
		\end{eqnarray*}
		Let $\alpha\in A^+_{f_1}[\pi]$, then there exist $\beta\in A_{f_1}^+[\pi^2]$ such that $\alpha=\pi\beta$. Let $\phi(\beta)=\frac{a_1}{\pi^2}v_1+\frac{b_1}{\pi^2}v_2+T_{f_2}\in A_{f_2}[\pi^2]$ for some $a_1, b_1\in \cO$, then for any $\sigma\in G_\p$ 
		\begin{eqnarray}\label{eq2}
			\sigma\cdot\phi(\beta)=\frac{\delta_{f}^{-1}(\sigma)\omega_p^{k-1}(\sigma)a_1+\ast(\sigma)b_1}{\pi^2}v_1+\frac{\delta_{f}(\sigma)b_1}{\pi^2}v_2+T_{f_2}.
		\end{eqnarray}
		On the other hand,  as $\phi$ is a $G_\p$-isomorphism and $\beta \in A_{f_1}^+[\pi^2]$, it follows that for every $\sigma \in G_\p$,
		\begin{eqnarray}\label{eq3}
			\sigma\cdot\phi(\beta)=\phi(\sigma\cdot \beta)=\delta_{f}^{-1}(\sigma)\omega_p^{k-1}(\sigma)\phi(\beta).
		\end{eqnarray}
		From \eqref{eq2} and \eqref{eq3}, we deduce that for all $\sigma\in I_\p$,
		{\footnotesize
			\begin{eqnarray}\label{eq5}
				\frac{\delta_{f}^{-1}(\sigma)\omega_p^{k-1}(\sigma)a_1+\ast(\sigma)b_1}{\pi^2}v_1+\frac{\delta_{f}(\sigma)b_1}{\pi^2}v_2+T_{f_2} =  \frac{\delta_{f}^{-1}(\sigma)\omega_p^{k-1}(\sigma)a_1}{\pi^2}v_1+ \frac{\delta_{f}^{-1}(\sigma)\omega_p^{k-1}(\sigma)b_1}{\pi^2}v_2+T_{f_2}. 
		\end{eqnarray}} 
		Now \eqref{eq5} implies that 
		$(\delta_{f}^{-1}(\sigma)\omega_p^{k-1}(\sigma)-\delta_{f}(\sigma))b_1/\pi^2\in \cO_{f_1}$. { By our assumption that $\omega_{p|_{G_p}}\not\equiv 1\pmod{\pi^2}$ and $\omega_p$ is ramified at $\p$, it follows that  $\omega_p^{k-1}(\sigma)=-1$ for some $\sigma\in I_\p$}. By using hypothesis \ref{I}, we deduce that $\pi\mid b_1$. Further using  $\phi(\alpha)=\pi\phi(\beta)$, we get $\phi(\alpha)=\frac{\pi a_1}{\pi^2}v_1+\frac{\pi b_1}{\pi^2}v_2+T_{f_2}.$
		Hence $\phi(\alpha)=\frac{a_1}{\pi}v_1+T_{f_2}\in A^+_{f_2}[\pi].$ This shows that given $\alpha\in A_{f_1}^+[\pi]$,  $\phi(\alpha)\in A_{f_2}^+[\pi]$ and this completes the proof.
	\end{proof}
\begin{remark}
   \textnormal{ The result corresponding to lemma \ref{Lemma 4} for an odd prime is discussed in \cite[Lemma 3.11]{JSM19}. However the crucial assumption in \cite[Lemma 3.11]{JSM19} is that $\omega_p^{k-1}\epsilon_{i, p}\ne 1\pmod{\pi}$, $\epsilon_{i,p}$ is the $p$-part of the nebentypus $\epsilon_i$ of $f_i\in S_k(\Gamma_0(Np^t), \epsilon_i)$. As indicated in the proof of \cite[Lemma 3.11]{JSM19}, this is to ensure that the action of $I_p$ on $A_{f_i}^+$ is non-trivial, so that $H^0(I_p, A_{f_i}^+)=0$.}

 \textnormal{   Note that in our setting, $p=2$, $k$ is even and nebentypus is trivial, so $\epsilon_{i, p}$ is trivial, thus we necessarily  have $\omega_p^{k-1}\equiv 1\pmod{\pi}$ in lemma \ref{Lemma 4}.  The proof of \cite[Lemma 3.11]{JSM19} does not hold and a different strategy is used in the proof lemma \ref{Lemma 4}. In fact we use a higher congruence i.e. a $G_\p$-isomorphism between $A_{f_1}[\pi^2]\cong  A_{f_2}[\pi^2]$ to identify $A^-_{f_1}[\pi]$ with $A^-_{f_2}[\pi]$ in lemma \ref{Lemma 4}.  Note that at $p=2$, Mazur-Rubin assumes $\EC_1[4]\cong \EC_2[4]$ for the corresponding result in \cite[Theorem 3.1]{mr}. Thus our hypothesis in lemma \ref{Lemma 4}  is compatible with the corresponding hypothesis of \cite{mr}.  }
\end{remark} 
    
	\begin{lemma}\label{Lemma 3}
		Let $f_1,f_2\in S_k(\Gamma_0(Np^t))$ be $p$-ordinary newforms and  $\omega_{p|_{G_p}}\not\equiv 1\pmod{\pi^2}$. Let $\phi:A_{f_1}[\pi^2]\rightarrow A_{f_2}[\pi^2]$ be an $G_\p$-isomorphism,  then for every quadratic character $\chi$ of $K$ and for every $j$ with $0\le j\le k-2$, we have 
		$$\phi(Im(\kappa^{-ur}_{f_1\chi(-j), \p}))=Im(\kappa^{-ur}_{f_2\chi(-j), \p}).$$
	\end{lemma}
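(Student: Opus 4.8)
The goal is to show that the $G_\p$-isomorphism $\phi\colon A_{f_1}[\pi^2]\to A_{f_2}[\pi^2]$ carries $\mathrm{Im}(\kappa^{-ur}_{f_1\chi(-j),\p})$ onto $\mathrm{Im}(\kappa^{-ur}_{f_2\chi(-j),\p})$. The plan is to reduce everything to the quotient modules $A^-_{f_i}[\pi]$, on which the Kummer maps are defined, using Lemma~\ref{Lemma 4} as the main input. First I would note that, by Lemma~\ref{Lemma 4}, $\phi$ maps $A^+_{f_1}[\pi]$ to $A^+_{f_2}[\pi]$ and hence induces a $G_\p$-isomorphism $\bar\phi\colon A^-_{f_1}[\pi]\to A^-_{f_2}[\pi]$; twisting by $\chi\omega_p^{-j}$ (which is a character, so commutes with all the constructions and with passage to $\pi$-torsion) gives a $G_\p$-isomorphism $\bar\phi_{\chi(-j)}\colon A^-_{f_1\chi(-j)}[\pi]\to A^-_{f_2\chi(-j)}[\pi]$. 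Here one uses hypothesis~\ref{I} to identify $[\pi^2]$-torsion with $[4]$-torsion and $[\pi]$-torsion with $[2]$-torsion throughout, so that $\phi$ restricted to $A_{f_1}[\pi]$ is itself a $G_\p$-isomorphism onto $A_{f_2}[\pi]$ compatible with $\bar\phi$.

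Next I would invoke naturality of the Kummer map. For each $i$, the map $\kappa^{-ur}_{f_i\chi(-j),\p}\colon A^{-I_\p}_{f_i\chi(-j)}/\pi\to H^1(I_\p,A^-_{f_i\chi(-j)}[\pi])$ is the connecting homomorphism in the long exact $I_\p$-cohomology sequence attached to
\begin{equation*}
0\to A^-_{f_i\chi(-j)}[\pi]\to A^-_{f_i\chi(-j)}\xrightarrow{\ \pi\ } A^-_{f_i\chi(-j)}\to 0,
\end{equation*}
i.e. $\mathrm{Im}(\kappa^{-ur}_{f_i\chi(-j),\p})=\ker\bigl(H^1(I_\p,A^-_{f_i\chi(-j)}[\pi])\to H^1(I_\p,A^-_{f_i\chi(-j)})\bigr)$. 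A $G_\p$-isomorphism $A_{f_1}[\pi^2]\cong A_{f_2}[\pi^2]$ restricts to a $G_\p$-isomorphism $A_{f_1}[\pi]\cong A_{f_2}[\pi]$ and, crucially, is compatible with multiplication by $\pi$ on the level of $A_{f_i}[\pi^2]$, hence induces an isomorphism of the above short exact sequences \emph{up to $\pi^2$-torsion}; restricting to $I_\p$-cohomology and using functoriality of the connecting map, the induced map $\tilde\phi$ on $H^1(I_\p,-[\pi])$ commutes with the maps to $H^1(I_\p,-)$ (more precisely, with the maps to $H^1(I_\p,-[\pi])$ obtained by the standard reformulation). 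Therefore $\tilde\phi$ sends the kernel of the one map isomorphically onto the kernel of the other, which is exactly the claimed equality of images.

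The one point requiring care — and what I expect to be the main obstacle — is that $\phi$ is only defined on $[\pi^2]$-torsion, not on the full divisible module $A_{f_i}$, so one cannot literally apply $\phi$ to the short exact sequence displayed above. The clean fix is to work entirely with the ``mod $\pi$'' reformulation already used in the text around diagram~\eqref{Commu.diag.1}: the image of $\kappa^{-ur}$ equals $\ker\bigl(i_\p^{-ur}\circ\gamma\bigr)$ where $i_\p^{-ur}\colon H^1(I_\p,A^-[\pi])\to H^1(I_\p,A^-)$ is the map induced by inclusion, and the snake lemma identifies this with a subgroup described purely in terms of $A^-[\pi^2]$ (namely the image of $A^{-I_\p}[\pi^2]\xrightarrow{\pi} H^1$, equivalently the Bockstein-type kernel). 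Since $\phi$ does act on $A^-_{f_i}[\pi^2]$ and is $G_\p$-equivariant and $\pi$-linear, it respects this description, giving the result. Concretely, it suffices to check $\tilde\phi\bigl(\mathrm{Im}(\kappa^{-ur}_{f_1\chi(-j),\p})\bigr)\subseteq \mathrm{Im}(\kappa^{-ur}_{f_2\chi(-j),\p})$; applying the same argument to $\phi^{-1}$ gives the reverse inclusion, and hence equality.
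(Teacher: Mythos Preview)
Your approach contains a genuine gap at precisely the point you flag as the main obstacle. You assert that ``$\phi$ does act on $A^-_{f_i}[\pi^2]$'', but Lemma~\ref{Lemma 4} only shows $\phi(A^+_{f_1}[\pi])\subseteq A^+_{f_2}[\pi]$, not $\phi(A^+_{f_1}[\pi^2])\subseteq A^+_{f_2}[\pi^2]$; the computation there yields only $\pi\mid b_1$, not $\pi^2\mid b_1$, so $\phi$ is not known to descend to the quotients $A^-_{f_i}[\pi^2]=A_{f_i}[\pi^2]/A^+_{f_i}[\pi^2]$. Even granting that, your identification of $\mathrm{Im}(\kappa^{-ur})$ with the Bockstein-type kernel $\ker\bigl(H^1(I_\p,A^-[\pi])\to H^1(I_\p,A^-[\pi^2])\bigr)$ fails in general. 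Write $\mu=(\chi\omega_p^{-j})|_{I_\p}$ for the character giving the $I_\p$-action on $A^-_{f\chi(-j)}$. If $\mu$ is nontrivial but $\mu\equiv 1\pmod{\pi^2}$ --- which occurs, for instance, when $\chi$ is unramified at $\p$ and $j$ is even with $0<j\le k-2$, since then $\omega_p^{-j}$ takes values in $(\Z_2^\ast)^2\subset 1+8\Z_2$ --- then $(A^-)^{I_\p}/\pi$ has $\cO/\pi$-dimension~$1$ while $(A^-[\pi])^{I_\p}/\pi\bigl((A^-[\pi^2])^{I_\p}\bigr)=A^-[\pi]/\pi A^-[\pi^2]=0$, so the Bockstein kernel is strictly smaller than $\mathrm{Im}(\kappa^{-ur})$. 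Thus the description ``purely in terms of $A^-[\pi^2]$'' is not available.

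The paper's proof avoids both problems with a different device. Since $A^-_{f_1}$ and $A^-_{f_2}$ are \emph{unramified} as $G_\p$-modules, one can fix an $I_\p$-isomorphism $\psi\colon A^-_{f_1}\to A^-_{f_2}$ on the \emph{full divisible} modules (both are $\cK_\pi/\cO$ with trivial $I_\p$-action), and hence $\psi\colon A^-_{f_1\chi(-j)}\to A^-_{f_2\chi(-j)}$ after twisting by the common character. Because $\psi$ is defined on all of $A^-$, it fits into a genuine morphism of Kummer sequences, so the induced $\tilde\psi|_\pi$ carries $\mathrm{Im}(\kappa^{-ur}_{f_1\chi(-j),\p})$ onto $\mathrm{Im}(\kappa^{-ur}_{f_2\chi(-j),\p})$. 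The link to $\phi$ is then made only on $[\pi]$-torsion: both $\psi|_\pi$ and the map $\bar\phi$ induced by $\phi$ via Lemma~\ref{Lemma 4} are isomorphisms of the one-dimensional $\cO/\pi$-spaces $A^-_{f_i\chi(-j)}[\pi]$, so $\psi|_\pi=\alpha\,\bar\phi$ for some $\alpha\in(\cO/\pi)^\times$, and multiplication by a scalar preserves any subgroup. This auxiliary $\psi$ is the missing idea.
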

	\begin{proof}
		Note that both the $G_\p$ modules $A^-_{f_1}$ and $A^-_{f_2}$ are  unramified. Fix an $I_\p$-isomorphism $\psi:A^-_{f_1}\rightarrow A^-_{f_2}$; this induces an $I_\p$-isomorphism  $\psi:A^-_{f_1\chi(-j)}\rightarrow A^-_{f_2\chi(-j)}$. Let $\psi_{|_\pi}:A^-_{f_1\chi(-j)}[\pi]\rightarrow A^-_{f_2\chi(-j)}[\pi]$ be the restriction of the map $\psi$. Now the $G_\p$-isomorphism $\phi:A_{f_1}[\pi^2]\rightarrow A_{f_2}[\pi^2]$ induces the map $\phi=\phi|_{A_{f_1}[\pi]}: A_{f_1}[\pi]\rightarrow A_{f_2}[\pi]$. By Lemma \ref{Lemma 4}, $\phi$ induces a $G_\p$-isomorphism from $ A^-_{f_1}[\pi]$ to $ A^-_{f_2}[\pi]$. Further  taking twist by $\chi\omega_p^{-j}$, we get an $I_\p$-isomorphism from $ A^-_{f_1\chi(-j)}[\pi]$ to $ A^-_{f_2\chi(-j)}[\pi]$ which we continue to denote by $\phi$. Since $\dim_{\cO/\pi}A^-_{f_1\chi(-j)}[\pi]=1=\dim_{\cO/\pi}A^-_{f_2\chi(-j)}[\pi]$,  therefore there exists $\alpha\in (\cO/\pi)^\ast$ such that $\psi_{|_\pi}=\alpha \phi$. Let $\tilde{\psi}_{|_\pi}\text{ and } \tilde{\phi}: H^1(I_\p, A^-_{f_1\chi(-j)}[\pi])\rightarrow  H^1(I_\p, A^-_{f_2\chi(-j)}[\pi])$ be the maps induced from $\psi_{|_\pi}$ and $\phi$, respectively. 
		Now consider the commutative diagram,
		{\small \begin{eqnarray*}
				\begin{tikzcd}
					0\arrow[r, ""] & A_{f_1\chi(-j)}^{-I_\p}/\pi\arrow[r, "\kappa_{f_1\chi(-j), \p}^{-ur}"]\arrow[d, ""] &  H^1(I_\p, A^-_{f_1\chi(-j)}[\pi])\arrow[r, "i_1^{ur}"]\arrow[d, "\tilde{\psi}_{|_\pi}=\alpha \tilde{\phi}"] & H^1(I_\p, A^-_{f_1\chi(-j)})\arrow[d, "\tilde{\psi} "] \\
					0\arrow[r, ""] & A_{f_2\chi(-j)}^{-I_\p}/\pi\arrow[r, "\kappa_{f_2\chi(-j), \p}^{-ur}"] & H^1(I_\p, A^-_{f_2\chi(-j)}[\pi])\arrow[r, "i_2^{ur}"] & H^1(I_\p, A^-_{f_2\chi(-j)}). 
				\end{tikzcd}
		\end{eqnarray*}}
		Since both maps $\tilde{\psi}_{|_\pi}$ and $\tilde{\psi}$ are isomorphism, it follows that $ \tilde{\psi}_{|_\pi}(\ker i_1^{ur})= \ker i_2^{ur}.$ Consequently, $\alpha\tilde{\phi}(Im(\kappa^{-ur}_{f_1\chi(-j), \p}))=Im(\kappa^{-ur}_{f_2\chi(-j), \p})$ and hence $\phi(Im(\kappa^{-ur}_{f_1\chi(-j), \p}))=Im(\kappa^{-ur}_{f_2\chi(-j), \p})$.
	\end{proof}
	Finally, we discuss the situation  when $v\mid \infty$. Since only a real place can ramify, we can take $v$ to be a real place of $K$. In this situation $G_v=I_v=\{1, c\}$, where $c$ denotes the complex conjugation. Let $f\in \{f_1, f_2\}$. Recall that $\rho_f$ is an odd Galois representation i.e. $\det(\rho_f(c))=-1$. Therefore, $\rho_{f_{|G_v}}\sim\begin{pmatrix}
		1 & 0 \\
		0 & -1
	\end{pmatrix},$
	which induces a split exact sequence:   
    \begin{small}
	\begin{eqnarray}\label{split exact}
		\begin{tikzcd}
0 \arrow[r] & A_f^\prime \arrow[r, "J^\prime_v"] & A_f \arrow[r, ""]  & A_{f}^{\prime\prime} \arrow[r]\arrow[l, bend left, "J_v^{\prime\prime}"] & 0.
\end{tikzcd}
\end{eqnarray}
    \end{small}
	 Note that the action of $G_v$ is trivial on $A_{f}^{'}$ and non-trivial on $A_f^{''}$. 
	\begin{lemma}\label{Lemma 5}
		Let $v\mid\infty$ be a prime of $K$ and $f\in\{f_1, f_2\}$. Then for every quadratic character $\chi$ of $K$ and for every $j$ with $0\le j\le k-2$, we have the following: 
		\begin{itemize}
			\item If $\chi\omega_p^{-j}$ is unramified at $v$, then
			$Im(\kappa^{ur}_{f\chi(-j), v})= J_v^{\prime\prime}(H^1(I_v, A_{f\chi(-j)}^{\prime\prime}[\pi]))$.
			\item If $\chi\omega_p^{-j}$ is ramified at $v$, then
			$Im(\kappa_{f\chi(-j), v}^{ur})=J_v^\prime(H^1(I_v, A_{f\chi(-j)}^{\prime}[\pi]))$.
		\end{itemize}
	\end{lemma}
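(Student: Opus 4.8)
The plan is to reduce everything to the split exact sequence \eqref{split exact} and to a summand-by-summand computation of $\ker i_v^{ur}$, using the identification $Im(\kappa^{ur}_{f\chi(-j), v}) = \ker i_v^{ur}$ recorded above. A complex place is trivial ($G_v = I_v = 1$, all groups vanish, and $\chi\omega_p^{-j}$ is automatically unramified there), so I may take $v$ to be real, $G_v = I_v = \langle c\rangle \cong \ZZ/2$. Since $\rho_f$ is odd, \eqref{split exact} gives a $G_v$-decomposition $A_f = A_f^\prime \oplus A_f^{\prime\prime}$ in which $c$ acts trivially on $A_f^\prime$ and by $-1$ on $A_f^{\prime\prime}$, both summands being $\cO$-divisible of corank $1$. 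Twisting by $\chi\omega_p^{-j}$ and using $\omega_p(c) = -1$, the module $A_{f\chi(-j)}$ is again the direct sum of the (twisted) pieces $A_{f\chi(-j)}^\prime$ and $A_{f\chi(-j)}^{\prime\prime}$, but now exactly one of them carries the trivial $G_v$-action: it is $A_{f\chi(-j)}^\prime$ when $\chi\omega_p^{-j}$ is unramified at $v$ and $A_{f\chi(-j)}^{\prime\prime}$ when $\chi\omega_p^{-j}$ is ramified at $v$. I will write $A^{\mathrm{triv}}$ for that summand and $A^{\pm}$ for the other one, on which $c$ acts by $-1$.

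Next I would pass to $\pi$-torsion. Because $A_{f\chi(-j)}^\prime$ is divisible, the kernel--cokernel sequence for multiplication by $\pi$ shows $0 \to A_{f\chi(-j)}^\prime[\pi] \to A_{f\chi(-j)}[\pi] \to A_{f\chi(-j)}^{\prime\prime}[\pi] \to 0$ is exact, and it is $G_v$-split via $J_v^{\prime\prime}$ (which is $\cO$-linear, hence preserves $\pi$-torsion). Therefore all the relevant cohomology groups split compatibly with $J_v^\prime$ and $J_v^{\prime\prime}$, and the map $i_v^{ur}$ decomposes as the direct sum of the corresponding maps for $A^{\mathrm{triv}}$ and $A^{\pm}$. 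It remains to compute the two kernels. For $A^{\mathrm{triv}}$: it is divisible with trivial $G_v$-action, so $H^0(I_v, A^{\mathrm{triv}})/\pi = A^{\mathrm{triv}}/\pi = 0$, the Kummer map is zero, and $H^1(I_v, A^{\mathrm{triv}}[\pi]) \hookrightarrow H^1(I_v, A^{\mathrm{triv}})$; this kernel vanishes. For $A^{\pm}$: since $c$ acts by $-1$, hypothesis \ref{I} gives $H^0(I_v, A^{\pm}) = A^{\pm}[2] = A^{\pm}[\pi]$, so $H^0(I_v, A^{\pm})/\pi \cong A^{\pm}[\pi]$ has $\cO/\pi$-dimension $1$; and since $\cO/\pi$ has characteristic $2$, the group $A^{\pm}[\pi]$ is a trivial $1$-dimensional $G_v$-module, whence $H^1(I_v, A^{\pm}[\pi])$ also has $\cO/\pi$-dimension $1$. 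The Kummer map $H^0(I_v, A^{\pm})/\pi \hookrightarrow H^1(I_v, A^{\pm}[\pi])$ is an injection between $1$-dimensional spaces, hence an isomorphism, so its image $\ker\big(H^1(I_v, A^{\pm}[\pi]) \to H^1(I_v, A^{\pm})\big)$ is all of $H^1(I_v, A^{\pm}[\pi])$.

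Assembling the pieces, $Im(\kappa^{ur}_{f\chi(-j), v}) = \ker i_v^{ur} = 0 \oplus H^1(I_v, A^{\pm}[\pi])$, which under the splitting is $J_v^{\prime\prime}\big(H^1(I_v, A_{f\chi(-j)}^{\prime\prime}[\pi])\big)$ in the unramified case (there $A^{\pm} = A_{f\chi(-j)}^{\prime\prime}$) and $J_v^\prime\big(H^1(I_v, A_{f\chi(-j)}^\prime[\pi])\big)$ in the ramified case (there $A^{\pm} = A_{f\chi(-j)}^\prime$); these are exactly the two asserted formulas. I do not anticipate a real obstacle; the one point that genuinely requires care is the bookkeeping of which corank-$1$ piece acquires the trivial $G_v$-action after the twist — this is what produces the $J_v^{\prime\prime}$ versus $J_v^\prime$ dichotomy — together with the observation that although $A_{f\chi(-j)}^\prime[\pi]$ and $A_{f\chi(-j)}^{\prime\prime}[\pi]$ are abstractly isomorphic $G_v$-modules (both trivial, since $-1 = 1$ in characteristic $2$), the Kummer image sees the divisibility of the ambient module, so the twist does matter.
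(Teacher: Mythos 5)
Correct, and essentially the paper's approach: you decompose via the split exact sequence \eqref{split exact} twisted by $\chi\omega_p^{-j}$, identify which corank-one summand carries the nontrivial action of $c$ after twisting, and compute $\ker i_v^{ur}$ summand by summand (zero on the divisible piece with trivial action, everything on the piece where $c$ acts by $-1$, using hypothesis \ref{I} to identify $[2]$-torsion with $[\pi]$-torsion). The paper runs the identical computation through a snake-lemma chase on the diagram \eqref{Commu.diag.6}; since that diagram is split by $J_v^{\prime\prime}$, this is only a cosmetic difference.
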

	\begin{proof}
		We have $G_v=I_v\cong \ZZ/2\ZZ$. Tensoring the  exact sequence \eqref{split exact} by $\chi\omega_p^{-j}$ and 
        consider the following commutative diagram, 
		\begin{small}\begin{eqnarray}\label{Commu.diag.6}
			\begin{tikzcd}0\arrow[r, ]  & H^1(I_v, A_{f\chi(-j)}^{\prime}[\pi]) \arrow[r, "J^{\prime}_v"]\arrow[d, "\gamma_v"]  & H^1(I_v, A_{f\chi(-j)}[\pi])\arrow[r, ""] \arrow[d, "i_v^{ur}"] & H^1(I_v, A_{f\chi(-j)}^{\prime\prime}[\pi])\arrow[r] \arrow[d, "\theta_v"] \arrow[l, bend left, "J_v^{\prime\prime}"]& 0 \\
				0 \arrow[r,] & H^1(I_v, A_{f\chi(-j)}^{\prime}) \arrow[r, ] & H^1(I_v, A_{f\chi(-j)}) \arrow[r, ""] & H^1(I_v, A_{f\chi(-j)}^{\prime\prime})\arrow[r] &0.  
			\end{tikzcd}
		\end{eqnarray}
        \end{small}
	First, we consider the case when $\chi\omega_\p^{-j} $ is unramified at $v$.	Since $A_{f\chi(-j)}^{'I_v}=A_{f}^{'}$ is $\pi$-divisible, it follows that $\ker\gamma_v \cong A_{f\chi(-j)}^{'I_v}/\pi=0$ and hence by {the} snake lemma, we get that $\ker\theta_v\hookrightarrow\ker i_v^{ur}.$
		Since $I_v\cong \ZZ/2\ZZ$ is {acts} trivially on $A_{f\chi(-j)}^{\prime\prime}[\pi]$, we deduce that $\dim_{\cO/\pi}H^1(I_v, A_{f\chi(-j)}^{\prime\prime}[\pi])=1$. We also note that $\dim_{\cO/\pi}\ker \theta_v=\dim_{\cO/\pi} A_{{f\chi(-j)}}^{\prime\prime I_v}/\pi=\dim_{\cO/\pi}A_{f\chi(-j)}^{\prime\prime}[\pi]=1$ and thus  $\ker\theta_v=H^1(I_v, A_{f\chi(-j)}^{\prime\prime}[\pi]).$ This shows that, 
		\[Im(\kappa^{ur}_{f\chi(-j), v})=\ker i_v^{ur}= J_v^{\prime\prime}(H^1(I_v, A_{f\chi(-j)}^{\prime\prime}[\pi])).\]
		
		Now consider the second case i.e., $\chi\omega_p^{-j}$ is ramified at $v$, then the action of $I_v$ is non-trivial on $A_{f\chi(-j)}^{\prime}$ and trivial on $A_{f\chi(-j)}^{\prime\prime}$. Therefore $A_{f\chi(-j)}^{\prime\prime}$ is $\pi$-divisible and so $\ker\theta_v=0$. 
        Using a similar argument as in the previous case, we get $Im(\kappa_{f\chi(-j), v}^{ur})=\ker i_v^{ur}=J_v^\prime(H^1(I_v, A_{f\chi(-j)}^{\prime}[\pi])).$
	\end{proof}
	We continue our study at $v\mid \infty$. 
	\begin{lemma}\label{Lemma 4.6}
		Let $f_1,f_2\in S_k(\Gamma_0(Np^t))$ be as in theorem \ref{mainthm} and  $\phi:A_{f_1}[\pi^2]\rightarrow A_{f_2}[\pi^2]$ be a $G_v$-isomorphism, where $v\mid \infty$. Then $\phi$ induces a $G_v$-isomorphism from $A_{f_1}^\prime[\pi]$ to $A_{f_2}^\prime[\pi]$ and from $A_{f_1}^{\prime\prime}[\pi]$ to $A_{f_2}^{\prime\prime}[\pi]$ respectively. 
	\end{lemma}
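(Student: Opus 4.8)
The plan is to leverage the structure of $A_{f_i}$ as a $G_v$-module at an infinite place, exactly as recorded in the split exact sequence \eqref{split exact}. Since $v\mid\infty$ is real, $G_v = I_v = \{1,c\}$ has order $2$, and the oddness of $\rho_{f_i}$ forces $\rho_{f_i}|_{G_v}$ to be conjugate to $\mathrm{diag}(1,-1)$. Consequently, over $\cO/\pi^2$ (using hypothesis \ref{I}, so that $\pi = 2$ and $\cO/\pi^2 = \ZZ/4$), the module $A_{f_i}[\pi^2]$ decomposes as a direct sum of the $+1$-eigenspace $A_{f_i}'[\pi^2]$ and the $-1$-eigenspace $A_{f_i}''[\pi^2]$, each free of rank $1$ over $\cO/\pi^2$. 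The first step is to characterize these two summands intrinsically as $G_v$-submodules of $A_{f_i}[\pi^2]$: $A_{f_i}'[\pi^2]$ is precisely the submodule on which $c$ acts trivially, i.e. $A_{f_i}[\pi^2]^{G_v} = A_{f_i}[\pi^2]^{c=1}$, while $A_{f_i}''[\pi^2]$ is the submodule on which $c$ acts by $-1$, i.e. $\{x : (1+c)x = 0\}$ (note $2 \ne 0$ but $2$ is not a unit in $\ZZ/4$, so one must be slightly careful: the $c=-1$ eigenspace is the image of $1-c$ and also equals the kernel of $1+c$ since $(1-c)(1+c) = 1-c^2 = 0$ and the module is a direct sum of eigenlines).

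The second step is the immediate consequence: any $G_v$-isomorphism $\phi : A_{f_1}[\pi^2] \to A_{f_2}[\pi^2]$ must carry $c=1$ eigenvectors to $c=1$ eigenvectors and $c=-1$ eigenvectors to $c=-1$ eigenvectors, hence $\phi(A_{f_1}'[\pi^2]) = A_{f_2}'[\pi^2]$ and $\phi(A_{f_1}''[\pi^2]) = A_{f_2}''[\pi^2]$. The third step is to pass to the $\pi$-torsion: since $A_{f_i}'[\pi] = A_{f_i}'[\pi^2][\pi] = \pi\cdot A_{f_i}'[\pi^2]$ (the unique $\cO/\pi$-line inside the rank-one free $\cO/\pi^2$-module $A_{f_i}'[\pi^2]$), and $\phi$ is $\cO$-linear, we get $\phi(A_{f_1}'[\pi]) = \phi(\pi A_{f_1}'[\pi^2]) = \pi\, \phi(A_{f_1}'[\pi^2]) = \pi A_{f_2}'[\pi^2] = A_{f_2}'[\pi]$, and similarly for the double-prime pieces. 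This is formally parallel to the mechanism used at the end of the proof of Lemma \ref{Lemma 4}, where divisibility by $\pi$ was used to descend an identification from $\pi^2$-torsion to $\pi$-torsion; here the descent is even cleaner because the $G_v$-action is semisimple with distinct eigenvalues $\pm 1$, and no delicate congruence on $\omega_p$ is needed.

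The only mild subtlety — and the step I expect to require the most care — is that $\ZZ/4$ is not a field, so "eigenspace decomposition" for the order-$2$ operator $c$ needs justification: one uses that $2$ is invertible nowhere but that the minimal polynomial $X^2-1 = (X-1)(X+1)$ has coprime-ish factors in the sense that $1 = \frac{(X+1)-(X-1)}{2}$ fails integrally, yet the concrete bases coming from theorem \ref{gal-rep of f} already exhibit $A_{f_i}[\pi^2]$ as an honest direct sum $A_{f_i}'[\pi^2] \oplus A_{f_i}''[\pi^2]$ of $G_v$-stable lines, so one can simply quote that splitting \eqref{split exact} (which is explicitly stated to be split) rather than reprove it abstractly. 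Given the splitting, every $\cO/\pi^2[G_v]$-linear map $\phi$ automatically respects the two isotypic components because $A_{f_i}'[\pi^2]$ is the $\chi_{\mathrm{triv}}$-isotypic part and $A_{f_i}''[\pi^2]$ the $\chi_{\mathrm{sgn}}$-isotypic part, and $\Hom_{G_v}$ of distinct characters vanishes. This finishes the proof; one then combines Lemma \ref{Lemma 4.6} with Lemma \ref{Lemma 5} in the same way Lemma \ref{Lemma 4} feeds Lemma \ref{Lemma 3}, to conclude that $\phi$ carries $Im(\kappa^{ur}_{f_1\chi(-j),v})$ onto $Im(\kappa^{ur}_{f_2\chi(-j),v})$ at every real place $v$.
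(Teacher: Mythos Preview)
Your argument has a genuine gap at step two. Over $\cO/\pi^2 \cong \ZZ/4$ the claim that ``$\Hom_{G_v}$ of distinct characters vanishes'' is false: a $G_v$-map $\psi:(\ZZ/4)_{\mathrm{triv}}\to(\ZZ/4)_{\mathrm{sgn}}$ need only satisfy $2\psi(x)=0$, so $\Hom_{G_v}\cong\ZZ/2$. Concretely, with $c=\mathrm{diag}(1,-1)$ on $(\ZZ/4)^2$, the map $\phi=\begin{psmallmatrix}1&0\\2&1\end{psmallmatrix}$ is a $G_v$-isomorphism but $\phi(A'_{f_1}[\pi^2])=\{(a,2a)\}\neq A'_{f_2}[\pi^2]$. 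Your intrinsic characterizations in step one are likewise wrong: $A_{f_i}[\pi^2]^{c=1}=\ZZ/4\times 2\ZZ/4$ has order $8$, not $4$, so it strictly contains $A'_{f_i}[\pi^2]$; similarly $\ker(1+c)$ strictly contains $A''_{f_i}[\pi^2]$.

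The paper's proof avoids this by running the explicit coordinate computation of Lemma~\ref{Lemma 4} with $\rho_{f_i}|_{G_v}\sim\mathrm{diag}(1,-1)$: lifting $\alpha\in A'_{f_1}[\pi]$ to $\beta\in A'_{f_1}[\pi^2]$ and comparing $c\cdot\phi(\beta)$ with $\phi(c\cdot\beta)=\phi(\beta)$ forces $2b_1/\pi^2\in\cO$, hence $\pi\mid b_1$, hence $\phi(\alpha)=\pi\phi(\beta)\in A'_{f_2}[\pi]$. Your strategy can be repaired in the same spirit: the off-diagonal components $\phi_{21},\phi_{12}$ of $\phi$ (with respect to the splitting \eqref{split exact}) have image contained in the $\pi$-torsion, so $\pi\phi_{21}=\pi\phi_{12}=0$ and therefore $\phi(A'_{f_1}[\pi])=\pi\phi(A'_{f_1}[\pi^2])\subset \pi A'_{f_2}[\pi^2]=A'_{f_2}[\pi]$, with equality by counting. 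Equivalently and more intrinsically, one checks directly that $A'_{f_i}[\pi]=(1+c)\,A_{f_i}[\pi^2]$ and $A''_{f_i}[\pi]=(1-c)\,A_{f_i}[\pi^2]$; since $\phi$ commutes with $1\pm c$, the conclusion follows immediately.
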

	\begin{proof}
		Recall that for $i=1, 2$,  $\rho_{f_i|_{G_v}}\sim\begin{pmatrix}
			1 & 0 \\
			0 & -1
		\end{pmatrix}.$ We replace the matrix used in the proof of the lemma \ref{Lemma 4} with    $\rho_{f_{|G_v}}\sim\begin{pmatrix}
			1 & 0 \\
			0 & -1
		\end{pmatrix}$ and then a proof similar to lemma \ref{Lemma 4} works in this case.  
	\end{proof}
	Now we are ready to complete the proof of theorem \ref{mainthm}.

	\subsection{Proof of Theorem \ref{mainthm}} Recall $p=2$, $f_1, f_2$ are in the setting of theorem \ref{mainthm} and they satisfy hypothesis \ref{I}, hypothesis \ref{C2}.
	
	\noindent\textbf{Proof of part $(i)$:} In this case, we  have a given $G_K$-isomorphism $\phi:A_{f_1}[\pi^2]\rightarrow A_{f_2}[\pi^2]$. In particular, $\phi$ induces a $G_K$-isomorphism $\phi:A_{f_1}[\pi]\rightarrow A_{f_2}[\pi]$. Let $\chi$ be a quadratic character of $K$ and put $M=cond(\chi)$. Recall that $\Sigma$ is a finite set of primes of $K$ given by  $\Sigma=\{v \text{ prime in }K: v\mid NMp\infty\}$. Note that for every quadratic character $\chi$ of $K$ and for every $j$ with $0\le j\le k-2$, the isomorphism $\phi$ induces an isomorphism $\tilde{\phi}:H^1(G_{K, \Sigma}, A_{f_1\chi(-j)}[\pi])\rightarrow H^1(G_{K, \Sigma}, A_{f_2\chi(-j)}[\pi])$. We will show that $\tilde{\phi}$ induces an isomorphism  $S_{Gr}(A_{f_1\chi(-j)}[\pi]/K))\cong S_{Gr}(A_{f_2\chi(-j)}[\pi]/K)$ by proving that the local condition of $S_{Gr}(A_{f_1\chi(-j)}[\pi]/K)$ appearing in \eqref{Gr.Loc.cond} maps to the corresponding local condition of $S_{Gr}(A_{f_2\chi(-j)}[\pi]/K)$ for all $v\in \Sigma$. Now in view of lemma \ref{modfi.se.gp}, to prove the theorem,  it suffices to establish the following two results: (i) For every prime $v\in \Sigma, v\nmid p$ under the isomorphism $\tilde{\phi}: H^1(I_v, A_{f_1\chi(-j)}[\pi])\rightarrow H^1(I_v, A_{f_2\chi(-j)}[\pi])$, the sub-module $Im(\kappa^{ur}_{f_1\chi(-j), v})$ maps to $Im(\kappa^{ur}_{f_2\chi(-j), v})$. (ii) And    in the view of lemma \ref{modfi.se.gp} and lemma \ref{Lemma 3},  for every prime $\p\mid p$ of $K$, under the isomorphism $\tilde{\phi}:H^1(I_\p, A^-_{f_1\chi(-j)}[\pi])\rightarrow H^1(I_\p, A^-_{f_2\chi(-j)}[\pi])$, the sub-module $Im(\kappa^{-ur}_{f_1\chi(-j), \p})$ maps to $Im(\kappa^{-ur}_{f_2\chi(-j), \p})$. These proofs are subdivided into three parts; 
	\begin{enumerate}
		\item $v\nmid p\infty$: 
		\begin{enumerate}
			\item[$\mathrm(a)$]  If $v\nmid N$, then $v$ is good for both $f_1, f_2$. In this situation, lemma \ref{Lemma 2}$\mathrm(i)$, $\mathrm(ii)$ implies that $\tilde{\phi}(Im(\kappa^{ur}_{f_1\chi(-j), v}))=Im(\kappa^{ur}_{f_2\chi(-j), v})$. 
			\item[$\mathrm(b)$] If $v\mid N$, then lemma \ref{Lemma 2}$\mathrm(iii)$ implies  that  
			$\tilde{\phi}(Im(\kappa^{ur}_{f_1\chi(-j), v}))=Im(\kappa^{ur}_{f_2\chi(-j), v})=0$, when $\chi$ is unramified at $v$. On the other hand, if $\chi$ is ramified at $v$ then by lemma \ref{Lemma 2}$\mathrm(iv)$, we get that $Im(\kappa^{ur}_{f_i\chi(-j), v})=P_v^{-1}(H^1(H_{v, \chi}, A_{f_i\chi(-j)}^{\prime\prime}[\pi]) \text{ for }  $i=1, 2$.$
			Further by lemma \ref{lemma 2.3}, we identify $A_{f_i}^{\prime\prime}[\pi]=(A_{f_i}[\pi])_{I_v}$. Thus $\phi$ maps $A_{f_1}^{\prime\prime}[\pi]$ onto $A_{f_2}^{\prime\prime}[\pi]$ and in turn $\phi$ maps $A_{f_1\chi(-j)}^{\prime\prime}[\pi]$ onto $A_{f_2\chi(-j)}^{\prime\prime}[\pi]$. Hence 
			$\tilde{\phi}(Im(\kappa^{ur}_{f_1\chi(-j), v}))=Im(\kappa^{ur}_{f_2\chi(-j), v}).$
		\end{enumerate}
		\item $\p\mid 2$:
		The proof follows directly from lemma \ref{Lemma 3}. 
		\item $v\mid \infty$:  In this case {the} proof follows from lemma \ref{Lemma 5} and lemma \ref{Lemma 4.6}.\qed 
	\end{enumerate}    
	
	\noindent\textbf{Proof of Part $(ii)$:-}
	In this case, we have a $G_K$-isomorphism $\phi:A_{f_1}[\pi]\rightarrow A_{f_2}[\pi]$ which again induces an isomorphism $\tilde{\phi}:H^1(G_{K, \Sigma}, A_{f_1\chi(-j)}[\pi])\rightarrow H^1(G_{K, \Sigma}, A_{f_2\chi(-j)}[\pi])$. {Further, $K$ has no real place and $\bar{\rho}_{f_1}$ (and hence $\bar{\rho}_{f_2}$) is ramified at every prime $\p\mid p$ of $K$}.
	
	If $v\in \Sigma, v\nmid p$, then as in the proof of part $(i)$, under the isomorphism $\tilde{\phi}: H^1(I_v, A_{f_1\chi(-j)}[\pi])\rightarrow H^1(I_v, A_{f_2\chi(-j)}[\pi])$, the sub-module $Im(\kappa^{ur}_{f_1\chi(-j), v})$ maps to $Im(\kappa^{ur}_{f_2\chi(-j), v})$ by lemma \ref{Lemma 2}.   Next, as $K$ has no real embedding, local conditions at infinite places do not contribute in the $\pi$-Selmer group. 
	
	For a prime $\p\mid p$, we claim that:  the $G_K$-isomorphism $\phi:A_{f_1\chi(-j)}[\pi]\rightarrow A_{f_2\chi(-j)}[\pi]$ maps $A_{f_1\chi(-j)}^-[\pi]$ to $A_{f_2\chi(-j)}^-[\pi]$, for every $j$. Assuming the claim, following the proof of lemma \ref{Lemma 3}, we see that under the isomorphism $\tilde{\phi}:H^1(I_\p, A^-_{f_1\chi(-j)}[\pi])\rightarrow H^1(I_\p, A^-_{f_2\chi(-j)}[\pi])$, the sub-module $Im(\kappa^{-ur}_{f_1\chi(-j), \p})$ maps to $Im(\kappa^{-ur}_{f_2\chi(-j), \p})$. To establish the claim, observe that  under the assumption that $\bar{\rho}_{f_i}$ is ramified at $\p$, we have $\dim_{\cO/\pi}\, H^0(G_\p, A_{f_i\chi(-j)}[\pi]) \leq 1$, for $i=1, 2$. On the other hand, $A_{f_i\chi(-j)}^+[\pi]\hookrightarrow H^0(G_\p, A_{f_i\chi(-j)}[\pi])$. It follows that $A_{f_i\chi(-j)}^+[\pi]= H^0(G_\p, A_{f_i\chi(-j)}[\pi])$, for $i=1, 2$.  Therefore, $\phi(A_{f_1\chi(-j)}^+[\pi])=A_{f_2\chi(-j)}^+[\pi]$. Moreover, for $i=1, 2$, we have $A_{f_i\chi(-j)}^-[\pi]=\frac{A_{f_i\chi(-j)}[\pi]}{A_{f_i\chi(-j)}^+[\pi]}$ and this shows that $\phi(A_{f_1\chi(-j)}^-[\pi])=A_{f_2\chi(-j)}^-[\pi]$, as required.\qed

	\subsection{Example}\label{examp.}
	Consider the elliptic curves $\EC_1$ and $\EC_2$ of LMFDB label \href{https://www.lmfdb.org/EllipticCurve/Q/158/a/1}{158.a1} and \href{https://www.lmfdb.org/EllipticCurve/Q/158/b/1}{158.b1} respectively and fix $p=2$.  Let $f_1, f_2\in S_2(\Gamma_0(158))$ be the corresponding newforms obtained via modularity. Using sturm bound via SAGE, we show that $a_q(f_1)\equiv a_q(f_2)\pmod{2}$ for every rational prime $q$. The Fourier expansions of $f_1$ and $f_2$ are given by, 
	\[f_1(q)=q - q^2 - q^3 + q^4 - q^5 + q^6 - 3q^7 - q^8 - 2q^9 + O(q^{10}),\]
	\[f_2(q)= q - q^2 + q^3 + q^4 + 3q^5 - q^6 - q^7 - q^8 - 2q^9 + O(q^{10}).
	\]
     { Since $f_1-f_2\in S_2(\Gamma_0(158))$, by  \cite[Theorem 9.18]{stein}, if $a_q(f_1)\equiv a_q(f_2)\pmod{2}$ for all prime $q\le   
	\frac{k[\SL_2(\ZZ):\Gamma_0(N)]}{12}-\frac{[\SL_2(\ZZ):\Gamma_0(N)]-1}{N}$, then $a_q(f_1)\equiv a_q(f_2)\pmod{2}$ for every rational prime $q$.   
In this case,	we compute the Sturm bound $\frac{k[\SL_2(\ZZ):\Gamma_0(N)]}{12}-\frac{[\SL_2(\ZZ):\Gamma_0(N)]-1}{N}\approx 38.5$. 
	  By SAGE, we check that  $a_q(f_1)\equiv a_q(f_2)\pmod{2}$ for all prime $q\le 37$ and for $q=2, 79$. Thus $a_q(f_1)\equiv a_q(f_2)\pmod{2}$ for all prime $q$.}
	Note that the level $158=2\times 79$ is square-free and the coefficient field  $\cK_{f_1, f_2}=\Q$, therefore hypothesis \ref{I} holds. {Put $K:=\Q(\sqrt{-3})$, then $79$ is unramified in $K$. Using \cite[Proposition 2.12(c)]{DDT}, we deduce that for any prime $v\mid79$ of $K$ and $i=1,2$,  $cond_v (\rho_{f_i})=cond_v(\bar{\rho}_{f_i})=79$ i.e. $f_1$ and $f_2$ satisfy hypothesis \ref{C2}}. Using LMFDB, we deduce that  $\bar{\rho}_{f_1}$ and $\bar{\rho}_{f_2}$ are irreducible $G_\Q$-modules and hence equivalent as $G_\Q$-representations. So we conclude that $A_{f_1}[2]\cong A_{f_2}[2]$ as $G_\Q$-modules. { Using SAGE, we compute that the field $K(\EC_1[2])$ is ramified at $2$}, therefore $\bar{\rho}_{\EC_1}$ is ramified at $2$, i.e. condition $\mathrm(ii)(c)$ of theorem \ref{mainthm} satisfies. Then $f_1$ and $f_2$ satisfy all the hypotheses of theorem \ref{mainthm} and applying the theorem, we deduce that  $f_1$ and $f_2$ are Greenberg $2$-Selmer companion modular forms over $K=\Q(\sqrt{-3})$ i.e. for every quadratic character $\chi$ of $K$, we have an isomorphism 
	$S_{Gr}(A_{f_1\chi}[2]/K)\cong S_{Gr}(A_{f_2\chi}[2]/K).$  
	
    {In general, we can show $f_1$ and $f_2$ are Greenberg $2$-Selmer companion  forms    over any imaginary quadratic field $K=\Q(\sqrt{-d})$, where $d$ is positive, square-free, $79\nmid d$  and $d\equiv 3\mod(4)$.}
	
	\section{2-Selmer Near-Companion forms}\label{converse}
Fix $p=2$, an odd positive integer $N$, an even positive integer $k$ and  a number field $K$. 
	As before, $f_1, f_2\in S_k(\Gamma_0(Np^t))$ be two normalized cuspidal Hecke eigenforms which are newforms of level $Np^t$ for $t \geq 0$ and put $\cK:=\cK_{f_1, f_2}$. Let $\pi\mid p$ be a prime of $\cK$ induced by the embedding $i_p$. We fix an uniformizer $\pi$ of the ring of integers $\cO=\cO_{\cK_\pi}$ of $\cK_\pi$. 
	For $\dagger\in \{\text{BK, Gr}\}$, we now extend the notion of  near-companion elliptic curves \cite{mr} to $\dagger$ $\pi$-Selmer near-companion modular forms. 
  
	\begin{definition}\label{BK Sel nr com.}
	Let $f_1, f_2\in S_k(\Gamma_0(Np^t))$ be two newforms as above. Let $\dagger=BK$ or  $\dagger=Gr$ with $f_1$ and $f_2$ are $p$-ordinary. {We say $f_1$ and $f_2$ are $\dagger$ $\pi$-Selmer near-companion over $K$, if there exists a constant $C:=C(f_1, f_2, K)> 0$ such that for every quadratic character $\chi$ of $K$ and $0\le j\le k-2$,  \[ |\dim_{\F_2}S_\dagger(A_{f_1\chi(-j)}[\pi]/K)-\dim_{\F_2}S_\dagger(A_{f_2\chi(-j)}[\pi]/K)|<C.\]}
	\end{definition}
   We have the following variant of theorem \ref{mainthm} for Greenberg $\pi$-Selmer near-companion forms. This is a generalization of \cite[Proposition 7.13]{mr}. Notice that we only assume that $f_1$ and $f_2$ are congruent mod $\pi$ in proposition \ref{mainthm gr nr com.}. 
	\begin{prop}\label{mainthm gr nr com.}
		Let $f_1,\, f_2\in S_k(\Gamma_0(Np^t))$ be two $p$-ordinary  newforms. Assume that hypothesis $\mathrm{\ref{I}}$ holds. Suppose that there exist a $G_K$-isomorphism $A_{f_1}[\pi]\cong A_{f_2}[\pi]$, then $f_1$ and $f_2$ are Greenberg $\pi$-Selmer near-companion forms over $K$.   
	\end{prop}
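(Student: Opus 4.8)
The plan is to run the same local‑by‑local comparison used in the proof of Theorem~\ref{mainthm}, but to confine the places where the stronger hypotheses of that theorem were needed to a single \emph{fixed finite} set of primes, whose combined contribution to the Selmer group is then bounded uniformly in $\chi$ and $j$. Accordingly, I would set $\Sigma_0:=\{v\text{ prime of }K:v\mid Np\infty\}$, a finite set depending only on $f_1,f_2,K$; for a quadratic character $\chi$ of $K$ of conductor $M$ one has $\Sigma=\Sigma_\chi=\Sigma_0\cup\{v\mid M\}$. Writing $W_i:=A_{f_i\chi(-j)}[\pi]$, hypothesis~\ref{I} makes each $W_i$ a two‑dimensional $\F_2$-vector space, and the given $G_K$-isomorphism $A_{f_1}[\pi]\xrightarrow{\sim}A_{f_2}[\pi]$, tensored with $\chi\omega_p^{-j}$, produces a $G_K$-isomorphism $\phi\colon W_1\xrightarrow{\sim}W_2$. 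Since each $W_i$ is unramified outside $\Sigma_\chi$, this gives isomorphisms $\tilde{\phi}$ on $H^1(G_{K,\Sigma_\chi},W_i)$ and on $H^1(G_v,W_i)$, $H^1(I_v,W_i)$ for every $v\in\Sigma_\chi$, all compatible with restriction maps.

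The first step is to show that the local Greenberg conditions are transported by $\tilde{\phi}$ at every $v\in\Sigma_\chi\setminus\Sigma_0$. Such a $v$ divides $M=\mathrm{cond}(\chi)$ but not $Np\infty$, so $v\nmid p\infty$, the form $f_i$ is good at $v$, and $\chi$ is ramified at $v$. I would observe that the computation in the proof of Lemma~\ref{Lemma 2}(ii) uses only these facts together with hypothesis~\ref{I}, and in particular makes no use of hypothesis~\ref{C2} (which is unavailable here): it yields $Im(\kappa^{ur}_{f_i\chi(-j),v})=H^1(H_{v,\chi},W_i)\subseteq H^1(I_v,W_i)$, a subgroup functorial in the $G_v$-module $W_i$. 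Because $\tilde{\phi}$ commutes with restriction to inertia and with inflation along $I_v\twoheadrightarrow H_{v,\chi}$, this forces $\tilde{\phi}\big(Im(\kappa^{ur}_{f_1\chi(-j),v})\big)=Im(\kappa^{ur}_{f_2\chi(-j),v})$, hence, in the form given by Lemma~\ref{modfi.se.gp}, $\tilde{\phi}$ identifies the $v$-local condition of $S_{Gr}(A_{f_1\chi(-j)}[\pi]/K)$ with that of $S_{Gr}(A_{f_2\chi(-j)}[\pi]/K)$ for all $v\in\Sigma_\chi\setminus\Sigma_0$.

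It then follows that $\tilde{\phi}$ carries $S_{Gr}(A_{f_1\chi(-j)}[\pi]/K)$ onto the Selmer subgroup of $H^1(G_{K,\Sigma_\chi},W_2)$ cut out by the conditions $\tilde{\phi}_v\big(H^1_{Gr}(G_v,W_1)\big)$, which agree with the conditions defining $S_{Gr}(A_{f_2\chi(-j)}[\pi]/K)$ at every $v\in\Sigma_\chi\setminus\Sigma_0$ and can differ only at the finitely many $v\in\Sigma_0$. Passing through the auxiliary Selmer structure whose local condition at $v\in\Sigma_0$ is the intersection $\tilde{\phi}_v\big(H^1_{Gr}(G_v,W_1)\big)\cap H^1_{Gr}(G_v,W_2)$, the standard ``change of finitely many local conditions'' estimate gives
\[
\big|\dim_{\F_2}S_{Gr}(A_{f_1\chi(-j)}[\pi]/K)-\dim_{\F_2}S_{Gr}(A_{f_2\chi(-j)}[\pi]/K)\big|\ \le\ \sum_{v\in\Sigma_0}\big(\dim_{\F_2}H^1(G_v,W_1)+\dim_{\F_2}H^1(G_v,W_2)\big).
\]
Finally, since $\dim_{\F_2}W_i=2$, the local Euler--Poincar\'e characteristic formula and local Tate duality bound each $\dim_{\F_2}H^1(G_v,W_i)$ purely in terms of $[K_v:\Q_2]$ (at most $4$ for $v\nmid p\infty$, at most $4+2[K_v:\Q_2]$ for $v\mid p$, at most $2$ for real $v$), so the right‑hand side is at most a constant $C=C(f_1,f_2,K)$, independent of $\chi$ and of $j$ with $0\le j\le k-2$; this is precisely the near‑companion assertion of Definition~\ref{BK Sel nr com.}.

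I expect the only delicate point to be the first step: one must make sure that Lemma~\ref{Lemma 2}(ii) — and hence the transport of the local condition at the $\chi$-ramified primes away from $Np\infty$ — genuinely does not need hypothesis~\ref{C2}, since that hypothesis is not assumed here. Once that is granted, the rest is a routine combination of Lemma~\ref{modfi.se.gp}, the fixed finite set $\Sigma_0$, and standard local cohomology dimension bounds, paralleling the argument for elliptic curves in \cite{mr}.
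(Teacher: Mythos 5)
Your argument is correct and is exactly the one the paper alludes to: the paper's proof is a one‑line reference to Mazur--Rubin's Proposition 7.13 together with Lemma~\ref{Lemma 2}$(i)$,$(ii)$, which amounts to transporting the local condition at the $\chi$-ramified primes away from $\Sigma_0=\{v\mid Np\infty\}$ via Lemma~\ref{Lemma 2}$(ii)$ (whose proof, as you correctly observe, uses only hypothesis~\ref{I} and makes no use of~\ref{C2}) and then bounding the discrepancy at the fixed finite set $\Sigma_0$ by local cohomology dimensions. You have supplied the details the paper omits, but the route is the same.
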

    \begin{proof} The proof is similar to \cite[Proposition 7.13]{mr}, using lemma \ref{Lemma 2}$(i), (ii)$. 
	\end{proof}
	For the rest of \S \ref{converse}, we make the following hypothesis, which is a stronger version of hypothesis \ref{I}.
	\begin{customhypo}{III}\label{III}
		For the prime $\pi \mid 2$, $\cK_\pi=\Q_2$ holds.    
	\end{customhypo}
Now we fix  $f_1, f_2\in S_k(\Gamma_0(Np^t))$ as above.
We first establish an analogue of theorem \ref{Gr nr com. converse} for the Bloch-Kato $\pi$-Selmer group. This is an auxiliary result which is used in the proof of theorem \ref{Gr nr com. converse}.   
	\begin{prop}\label{BK-near-mainthm}
	Let $p=2$, $N \in \NN$ be odd, $k \in \NN$ be even and $K$ be a number field.	{Let $f_1, f_2\in S_k(\Gamma_0(Np^t))$ be two newforms} such that hypothesis \ref{III} holds. Assume that  either one of the  residual Galois representations  $\bar{\rho}_{f_1}$ or $\bar{\rho}_{f_2}$ is an irreducible $G_K$-module. Suppose that $f_1$ and $f_2$ are Bloch-Kato $\pi$-Selmer near-companion over $K$, then there is a $G_K$-isomorphism, $A_{f_1}[\pi]\cong A_{f_2}[\pi].$
	\end{prop}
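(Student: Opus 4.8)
The plan is to prove the contrapositive: assuming $A_{f_1}[\pi] \ncong A_{f_2}[\pi]$ as $G_K$-modules, I will exhibit a constant-free gap between $\dim_{\F_2} S_{BK}(A_{f_1\chi}[\pi]/K)$ and $\dim_{\F_2} S_{BK}(A_{f_2\chi}[\pi]/K)$ along an infinite family of quadratic twists $\chi$, contradicting the Bloch-Kato near-companion hypothesis. The first step is to use hypothesis \ref{III}: since $\cK_\pi = \Q_2$, we have $A_{f_i}[\pi] = A_{f_i}[2] \cong \F_2^2$, so $\Gal(K(A_{f_i}[\pi])/K) \hookrightarrow \GL_2(\F_2) \cong S_3$. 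Because $\bar\rho_{f_2}$ (say) is irreducible as a $G_K$-module, its image is a subgroup of $S_3$ acting irreducibly, hence is either $S_3$ itself or the cyclic group of order $3$. In either case, as explained in the introduction, $K_i := K(A_{f_i}[\pi])$ is the splitting field of a cubic $g_i(X) \in K[X]$, and $A_{f_1}[\pi] \cong A_{f_2}[\pi]$ as $G_K$-modules precisely when $K_1 = K_2$. So our assumption gives $K_1 \neq K_2$.

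Next I would attach to each residual representation an elliptic curve: set $\EC_i : Y^2 = g_i(X)$, so that $K_i = K(\EC_i[2])$ and $A_{f_i}[\pi] \cong \EC_i[2]$ as $G_K$-modules (for $i=2$ this is immediate; for $i=1$ one takes a cubic $g_1$ whose splitting field is $K_1$ — such exists whenever the image of $\bar\rho_{f_1}$ in $S_3$ is transitive, and the reducible case must be handled separately, see below). The key comparison input is then M. Yu's theorem \cite[Theorem 2]{Myu}: since $\EC_1[2] = A_{f_1}[\pi] \ncong A_{f_2}[\pi] = \EC_2[2]$, the curves $\EC_1$ and $\EC_2$ are \emph{not} $2$-Selmer near-companions over $K$, i.e. $|\dim_{\F_2} \Sel_2(\EC_1^\chi/K) - \dim_{\F_2} \Sel_2(\EC_2^\chi/K)|$ is unbounded as $\chi$ ranges over quadratic characters of $K$. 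The bridge from here to modular forms is proposition \ref{modular, ecc sel com}: it produces an infinite set $\fX$ of quadratic characters $\chi$ of $K$ such that, for $i=1,2$,
\[
\bigl| \dim_{\F_2} S_{BK}(A_{f_i\chi}[\pi]/K) - \dim_{\F_2} \Sel_2(\EC_i/K) \bigr|
\]
is bounded independently of $\chi \in \fX$. (I note that $\Sel_2(\EC_i/K)$ here should be read as the $2$-Selmer group of a suitable twist, so that a single bound works uniformly over $\fX$; the point is that twisting by $\chi \in \fX$ moves both $S_{BK}(A_{f_i\chi}[\pi]/K)$ and $\Sel_2(\EC_i^\chi/K)$ in lockstep up to bounded error, the error coming only from the finitely many bad primes in $Np\infty$ and from comparing the local conditions at primes dividing $2$.)

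Combining these: for $\chi \in \fX$,
\[
\bigl| \dim_{\F_2} S_{BK}(A_{f_1\chi}[\pi]/K) - \dim_{\F_2} S_{BK}(A_{f_2\chi}[\pi]/K) \bigr|
\;\geq\; \bigl| \dim_{\F_2}\Sel_2(\EC_1^\chi/K) - \dim_{\F_2}\Sel_2(\EC_2^\chi/K)\bigr| - C_0
\]
for an absolute constant $C_0$, and the right-hand side is unbounded over $\fX$ by Yu's theorem restricted to $\fX$ (one must check that $\fX$ is large enough that the Selmer rank difference of $\EC_1,\EC_2$ remains unbounded on it — this is part of what proposition \ref{modular, ecc sel com} is arranged to guarantee, or follows from the density/structure of $\fX$). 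Hence $f_1$ and $f_2$ are not Bloch-Kato $\pi$-Selmer near-companion over $K$, which is the desired contradiction. The main obstacle, and the step that carries all the real content beyond invoking \cite{Myu}, is proposition \ref{modular, ecc sel com}: controlling the difference between the residual Bloch-Kato $\pi$-Selmer group of a modular form and the classical $2$-Selmer group of the companion elliptic curve requires a prime-by-prime analysis of local conditions, and crucially a comparison at the primes $\p \mid 2$ where (as emphasized in remark \ref{rmk0.4}) $H^1_{BK}(G_\p, A_{f_i\chi}[\pi])$ is not visibly determined by the residual representation — this is why one restricts to the special set $\fX$ of twists, chosen so that at primes dividing $2$ the relevant local cohomology groups become comparable with bounded discrepancy. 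A secondary technical point is the reducible case: if $\bar\rho_{f_1}$ is reducible while $\bar\rho_{f_2}$ is irreducible, then $A_{f_1}[\pi] \ncong A_{f_2}[\pi]$ automatically, and one must either produce a (possibly reducible) companion curve $\EC_1$ with $\EC_1[2] \cong A_{f_1}[\pi]$ — e.g. taking $g_1$ to factor appropriately over $K$ — or argue directly that a reducible residual representation forces a Selmer rank behaviour (under twisting) distinguishable from the irreducible one; in either subcase the near-companion property fails.
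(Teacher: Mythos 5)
Your proposal is correct and follows essentially the same route as the paper: pass to the contrapositive, reduce via hypothesis~\ref{III} and the $S_3$-image observation to $K_1 \neq K_2$, attach elliptic curves $\EC_i$ with $\EC_i[2] \cong A_{f_i}[\pi]$, compare residual Bloch--Kato $\pi$-Selmer groups with classical $2$-Selmer groups of the curves over the restricted family $\mathfrak{X}$ (proposition~\ref{modular, ecc sel com}), and then invoke the unboundedness of $\dim\Sel_2(\EC_1^\chi/K) - \dim\Sel_2(\EC_2^\chi/K)$ along $\mathfrak{X}$. You correctly anticipate the main subtlety that the Selmer rank gap for $\EC_1, \EC_2$ must be produced \emph{within} $\mathfrak{X}$, not merely over all quadratic twists; the paper secures this in proposition~\ref{ecc sel com}, which reruns the construction in the proof of \cite[Theorem~4.4]{Myu} rather than citing Yu's Theorem~2 as a black box, so your phrasing ``by Yu's theorem restricted to $\fX$'' should be understood as an appeal to Yu's \emph{method}, not his stated theorem. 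Your aside about the reducible case for $\bar\rho_{f_1}$ (where no irreducible cubic $g_1$ exists and one must allow a reducible $g_1$, or handle $[K_1:K]\le 2$ separately) flags a point the paper glosses over; it is not a flaw in your proposal but a reasonable observation about the paper's presentation.
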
 
    The proof of proposition \ref{BK-near-mainthm} is prepared via lemmas \ref{set X}-\ref{modular, ecc sel com}. In fact, we begin our proof by making the following observation.
    
	 Notice that,  $A_{f_i}[\pi]\cong \frac{\ZZ}{2\ZZ}\oplus\frac{\ZZ}{2\ZZ}$ as a $\frac{\ZZ}{2\ZZ}$-vector space and let $K_i=K(A_{f_i}[\pi])$ denotes the fixed field of $\ker \bar{\rho}_{f_i}$, for $i=1, 2$. Since $\Gal(K_i/K)\hookrightarrow \Aut(A_{f_i}[\pi])\cong S_3$, it follows that $K_1=K_2$ if and only if $A_{f_1}[\pi]\cong A_{f_2}[\pi]$  as $G_K$-module (see \cite[Lemma 1.5]{Myu}).  
     Let $g_i(X)$ be the cubic irreducible polynomial over $K$ such that $K_i$ is the splitting field of $g_i(X)$, for $i=1,2 $. Consider the elliptic curve $\EC_i$ defined by  $\EC_i:Y^2=g_i(X)$. Then $K_i=K(\EC_i[2])$ and  $A_{f_i}[\pi]\cong \EC_i[2]$ as $G_K$-modules.  Let $S$ denote the finite set of primes of $K$ dividing $2N\Delta_1\Delta_2\infty$, where $\Delta_i$ is the discriminant of $\EC_i$ over $K$. Without any loss of generality, we assume that $\bar{\rho}_{f_2}$ is an irreducible $G_K$-module, which implies that $3\mid [K_2 : K]$. Let $F$ denote the compositum of the fields $K(A_{f_1}[\pi^2])$ and $K(\EC_1[4])$. Consider the restriction map $\Sel_2(\EC_1/K)\subset H^1(G_K, \EC_1[2])\rightarrow \Hom(G_{K_1}, \EC_1[2])$ by $s\mapsto \tilde{s}$. Let $L_1$ denotes the fixed field of $\underset{s\in \Sel_2(\EC_1/K)}{\cap}\ker \tilde{s}$ and $T_1$ denotes the Galois closure of $L_1$ over $K$. Set $T:=T_1F$. 
     
     The proof of the following lemma is similar to  \cite[Theorem 4.4]{Myu} and is omitted. 
	\begin{lemma}\label{set X}
		Suppose that $K_1\ne K_2$, then there exist infinitely many primes $v\notin S$ such that $\Frob_v|_T=1$  and $\Frob_v|_{K_2}$ has order $3$.    
	\end{lemma}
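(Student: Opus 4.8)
The plan is to produce a single element $\sigma\in\Gal(\overline{\Q}/K)$ with $\sigma|_T=1$ and $\sigma|_{K_2}$ of order $3$, and then invoke the Chebotarev density theorem for the finite Galois extension $TK_2/K$: this yields infinitely many primes $v$ of $K$ (which we may also require to avoid $S$ and the primes ramifying in $TK_2$) with $\Frob_v$ in the conjugacy class of $\sigma$. Since $T/K$ is Galois, $\Gal(TK_2/T)$ is normal in $\Gal(TK_2/K)$, so every such $\Frob_v$ still restricts trivially to $T$; and the restriction to $K_2$ of a conjugate of $\sigma$ is a conjugate of $\sigma|_{K_2}$, hence of order $3$. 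This is exactly the assertion, so everything reduces to finding $\sigma$.

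\noindent\textbf{Structural input.} Under hypothesis \ref{III} we have $\pi=2$, $A_{f_1}[\pi^2]=A_{f_1}[4]$, and $K_1=K(A_{f_1}[\pi])=K(\EC_1[2])$. The groups $\Gal(K(A_{f_1}[4])/K_1)$ and $\Gal(K(\EC_1[4])/K_1)$ embed in $\ker\big(\GL_2(\ZZ/4\ZZ)\to\GL_2(\ZZ/2\ZZ)\big)$, which is a $2$-group, so $\Gal(F/K_1)$ is a $2$-group; and $L_1/K_1$ is elementary abelian $2$ (it embeds in a finite product of copies of $\EC_1[2]$), whence $T_1/K$ is Galois with $\Gal(T_1/K_1)$ a $2$-group. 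Therefore $T/K$ is Galois, $K_1\subseteq T$, and $\Gal(T/K_1)$ is a $2$-group. On the other side, $K_2/K$ is Galois with $\Gal(K_2/K)\hookrightarrow\GL_2(\F_2)\cong S_3$; since $\bar{\rho}_{f_2}$ is irreducible its image is not contained in a subgroup of order $2$, so $\Gal(K_2/K)$ is $\ZZ/3$ or $S_3$, and in particular $3\mid[K_2:K]$.

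\noindent\textbf{The key claim and conclusion.} The crux is that no cubic subfield $C$ of $K_2$ lies in $T$. Granting this, $T\cap K_2$ contains no cubic subfield of $K_2$, so $[T\cap K_2:K]\le 2$, hence $\Gal(K_2/T\cap K_2)$ contains an element $\sigma_0$ of order $3$; lifting $\sigma_0$ through the canonical isomorphism $\Gal(TK_2/T)\xrightarrow{\ \sim\ }\Gal(K_2/T\cap K_2)$ gives the desired $\sigma$. To prove the claim, suppose $C\subseteq T$. Since $T/K$ is Galois the Galois closure of $C$ over $K$ lies in $T$, and that closure equals $K_2$ (a field with group $S_3$ has only non-Galois cubic subfields, whose Galois closure is the whole field, and a cyclic cubic is its own closure); so $K_2\subseteq T$ and hence $K_1K_2\subseteq T$. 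Then $\Gal(K_1K_2/K_1)$, being a quotient of the $2$-group $\Gal(T/K_1)$, is a $2$-group, while $\Gal(K_1K_2/K_1)\cong\Gal(K_2/K_1\cap K_2)$; but using $K_1\neq K_2$ (equivalently $A_{f_1}[\pi]\ncong A_{f_2}[\pi]$) and the subfield lattice of $\Gal(K_2/K)$, a short case check shows $K_1\cap K_2$ is neither a cubic subfield of $K_2$ nor $K_2$ itself, so $3\mid[K_2:K_1\cap K_2]$ — contradicting that $\Gal(K_2/K_1\cap K_2)$ is a $2$-group.

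\noindent\textbf{Main obstacle.} The delicate point is the field-theoretic claim that $T$ contains no cubic subfield of $K_2$; this is precisely where the hypothesis $K_1\neq K_2$ (i.e.\ the non-isomorphism of the residual representations) is indispensable. The underlying reason is that all the "order-$3$ content" of $\Gal(T/K)$ is already accounted for by $\Gal(K_1/K)$ — everything inside $T$ over $K_1$ being pro-$2$ by construction of $F$ and $T_1$ — so $T$ cannot swallow a cubic originating from $K_2$. Everything else (the setup of the two elliptic curves $\EC_i$ and the Chebotarev step) then follows the pattern of \cite[Theorem 4.4]{Myu}.
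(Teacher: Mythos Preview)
Your argument is correct and follows essentially the same approach as the paper, which omits the proof and points to \cite[Theorem~4.4]{Myu}: one shows $\Gal(T/K_1)$ is a $2$-group, uses $K_1\neq K_2$ to see $K_2\not\subseteq T$ (equivalently, no cubic subfield of $K_2$ lies in $T$), and then applies Chebotarev in $TK_2/K$.
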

	
 We define the (infinite) set $X:=\{q \text{ prime of } K: \Frob_q|_T=1 \text{ and }\Frob_q|_{K_2} \text{ has order 3}\}$. Also define $P_0:=\{q\notin S:\EC_i(K_q)[2]=0 \tx{ for }i=1,2\}$. Now we have two possibilities  $3\mid [K_1: K]$ or $3\nmid[K_1 :K]$. Observe that,  if $3\nmid [K_1:K]$  then $P_0$ is the empty set.  In either case, given a prime $q\in X$, there is a quadratic character $\chi$ of $K$  with the following properties (see \cite[Proposition 4.3]{Myu}): \begin{enumerate}
		\item $\chi_v:=\chi_{|_{G_v}}=1_v$ for every $v\in S$.
        \item $\chi$ is ramified  at $q$.
		\item $\chi$ is unramified outside $\{q\}\cup P_0$.
	\end{enumerate}
{Let  $\mathfrak{X}$ be a  set  of  quadratic characters of $K$,  defined as follows:   \begin{equation}\label{def X}
	    \mathfrak{X}:=\{\chi\in \Hom(G_K, \{\pm 1\}):\chi \text{ is unramified outside } X\cup P_0 \text{ and is ramified at some subset of $X$}\}.
	\end{equation}}
	
    \noindent Now following the proof of \cite[Theorem 4.4]{Myu}, we obtain: 
	\begin{prop}\label{ecc sel com}
		Suppose that $K_1\ne K_2$, then for every positive integer $d$ there is a quadratic character  $\chi\in \mathfrak{X}$ such that, 
	$\dim_{\F_2}\Sel_2(\EC_1^\chi/K)-\dim_{\F_2}\Sel_2(\EC_2^\chi/K)>d.$\qed 
	\end{prop}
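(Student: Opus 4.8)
The plan is to adapt Yu's argument \cite[Theorem 4.4]{Myu} to the pair $\EC_1,\EC_2$; its hypotheses apply here because $K_1\neq K_2$ forces $\EC_1[2]\not\cong\EC_2[2]$ as $G_K$-modules, and $\EC_2[2]\cong A_{f_2}[\pi]$ is irreducible. Fix $d\in\NN$. I will produce distinct primes $q_1,\dots,q_n\in X$ (with $n=n(d)$ chosen at the end) and a character $\chi\in\mathfrak{X}$ ramified at every $q_i$, trivial on $S$, and unramified outside $\{q_1,\dots,q_n\}\cup P_0$, for which $\dim_{\F_2}\Sel_2(\EC_1^\chi/K)\geq n-c_1$ and $\dim_{\F_2}\Sel_2(\EC_2^\chi/K)\leq c_2$, where $c_1,c_2$ depend only on $\EC_1,\EC_2,K$ and $S$; then $n>d+c_1+c_2$ proves the claim. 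Since $X$ is infinite (lemma \ref{set X}) and $X\cap P_0=\emptyset$, such a $\chi$ exists: take the product of the characters supplied by \cite[Proposition 4.3]{Myu} for the individual primes $q_i$, each of which is unramified at $q_j$ for $j\neq i$ because $q_j\notin\{q_i\}\cup P_0$.

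The bound for $\EC_2$ is the straightforward half. As $\chi$ takes values in $\{\pm 1\}$, reduction modulo $2$ yields $\EC_2^\chi[2]\cong\EC_2[2]$ as $G_K$-modules, and likewise over every completion, so $\Sel_2(\EC_2^\chi/K)$ is a Selmer group for the fixed module $\EC_2[2]$ whose local conditions vary with $\chi$. At $v\in S$ we have $\chi_v=1_v$, so these conditions are those of $\Sel_2(\EC_2/K)$. At a prime $q$ where $\chi$ ramifies we have $q\in X\cup P_0$, hence $\EC_2(K_q)[2]=0$: for $q\in X$ because $\Frob_q|_{K_2}$ has order $3$ and so acts on $\EC_2[2]\cong\F_2^2$ with characteristic polynomial $x^2+x+1$, hence with no nonzero fixed vector; for $q\in P_0$ by definition. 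Since $q\nmid 2\infty$, the local Euler characteristic formula gives $\dim_{\F_2}\EC_2^\chi(K_q)/2\EC_2^\chi(K_q)=\dim_{\F_2}\EC_2^\chi(K_q)[2]=\dim_{\F_2}\EC_2(K_q)[2]=0$, so the $\chi$-twisted local condition at $q$ is the zero subspace; the same computation shows the local condition of $\Sel_2(\EC_2/K)$ at $q$ is zero as well. At any remaining prime $v\notin S$ the character $\chi$ is unramified, both $\EC_2$ and $\EC_2^\chi$ have good reduction, and the two local conditions coincide with the unramified subspace under $\EC_2^\chi[2]\cong\EC_2[2]$. Thus the Selmer structures for $\EC_2^\chi$ and $\EC_2$ agree, whence $\dim_{\F_2}\Sel_2(\EC_2^\chi/K)=\dim_{\F_2}\Sel_2(\EC_2/K)=:c_2$ independently of $\chi\in\mathfrak{X}$.

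The lower bound $\dim_{\F_2}\Sel_2(\EC_1^\chi/K)\geq n-c_1$ is the Mazur--Rubin mechanism for manufacturing large Selmer groups, and this is precisely what \cite[Theorem 4.4]{Myu} (building on \cite{mr}) provides. The inputs are the conditions packaged into $X$: for each $q_i$, the relation $\Frob_{q_i}|_T=1$ gives, on the one hand, $\EC_1[4]\subseteq\EC_1(K_{q_i})$ (since $K(\EC_1[4])\subseteq F\subseteq T$), so that ramifying $\chi$ at $q_i$ swaps the unramified local condition at $q_i$ for a prescribed transverse $2$-dimensional subspace of $H^1(K_{q_i},\EC_1[2])$; and, on the other hand, the triviality of $\Frob_{q_i}$ on the field $T_1\subseteq T$ cut out by $\Sel_2(\EC_1/K)$, which forces every Selmer class of $\EC_1$ to localise to $0$ at $q_i$. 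A Poitou--Tate / Greenberg--Wiles count over $q_1,\dots,q_n$ — all drawn from the single fixed Chebotarev set $X$, which prevents them from interfering with one another — then yields $n$ independent new classes in $\Sel_2(\EC_1^\chi/K)$, up to a defect controlled solely by $\dim_{\F_2}\Sel_2(\EC_1/K)$ and the dual Selmer group, i.e. $\dim_{\F_2}\Sel_2(\EC_1^\chi/K)\geq n-c_1$. Combining with the previous paragraph, $\dim_{\F_2}\Sel_2(\EC_1^\chi/K)-\dim_{\F_2}\Sel_2(\EC_2^\chi/K)\geq n-c_1-c_2$, which exceeds $d$ once $n$ is large enough. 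I expect this last lower bound to be the crux — forcing the $2$-Selmer rank of a quadratic twist to grow linearly in the number of ramified primes while those primes all come from one fixed Chebotarev set — which is why the proof should invoke \cite[Theorem 4.4]{Myu} rather than reprove it; the only change needed here is the harmless enlargement of the auxiliary field by $K(A_{f_1}[\pi^2])$, which leaves $X$ infinite (lemma \ref{set X}) and is used only later, in proposition \ref{modular, ecc sel com}.
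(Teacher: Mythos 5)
Your proposal is correct and ultimately takes the same route as the paper: the paper's entire proof is the line ``following the proof of \cite[Theorem 4.4]{Myu}'', and your argument is a sketch of that adaptation which, as you say yourself, delegates the crux (the lower bound on $\dim_{\F_2}\Sel_2(\EC_1^\chi/K)$) to Yu. Where your write-up adds genuine value is the prime-by-prime verification that for every $\chi\in\mathfrak{X}$ the local conditions defining $\Sel_2(\EC_2^\chi/K)$ coincide with those of $\Sel_2(\EC_2/K)$, yielding the exact equality $\dim_{\F_2}\Sel_2(\EC_2^\chi/K)=\dim_{\F_2}\Sel_2(\EC_2/K)$ that the paper only cites Yu for in remark \ref{rmrk 2}; that computation is clean and correct. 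One caution on the $\EC_1$ side: your one-shot framing --- choosing $q_1,\dots,q_n\in X$ up front and invoking a Poitou--Tate count on the grounds that the primes ``don't interfere'' because they lie in a single Chebotarev class --- is not yet an argument. After ramifying at $q_1$, the new Selmer classes of $\EC_1^{\chi_1}$ need not localise trivially at $q_2\in X$, since those classes are not cut out by the auxiliary field $T_1$ built from $\Sel_2(\EC_1/K)$; so the claimed additivity of roughly $+2$ per prime requires justification. The paper (remark \ref{rmrk 2}) and Yu handle this iteratively, one $+2$ jump at a time, which is the cleaner bookkeeping and is what actually makes the defect $c_1$ independent of $n$. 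Since you explicitly defer this step to Yu's theorem rather than claim to reprove it, this is a framing discrepancy rather than a gap.
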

	
	Let $v\nmid 2N\infty$ be a prime of $K$  and $\chi$ be any quadratic character of $K$. { If $\chi$ is ramified, then $V_{f_{i}\chi}^{I_v}=0$ and hence $V_{f_i\chi}^{G_v}=\{0\}$, for $i=1,2$. On the other hand, if $\chi$ is unramified, since the complex absolute value of the eigenvalues of the action of $\Frob_v$ on $V_{f_i\chi}$ is not equal to $1$, we again have $V_{f_i\chi}^{G_v}=\{0\}$,} for $i=1,2$.  So we obtain $H^1(G_v/I_v, V_{f_i\chi}^{I_v})=0$ and this in turn implies that $H^1_{BK}(G_v, A_{f_i\chi}[\pi])=Im(\kappa_v)$, where $\kappa_v : A_{f_i\chi}^{G_v}/\pi\rightarrow H^1(G_v, A_{f_i\chi}[\pi])$ is the Kummer map. For a quadratic character $\chi$ of $K_v$, put $G_v^\chi:=\ker \chi$ and $G_{v, \chi}:=G_v/G_v^\chi$. In the following lemma, we determine $H^1_{BK}(G_v, A_{f_i\chi}[\pi])$ in terms of $A_{f_i\chi}[\pi]$.   
	\begin{lemma}\label{Lemma 7}
		Suppose that $K_1\ne K_2$ and $v$ be a prime of $K$ with $v\not\in S$. Then we have the following:
		\begin{enumerate}
				\item[(i)] If $\chi \in \mathfrak{X}$ is unramified at $v$, then for $i=1, 2$, $H^1_{BK}(G_v, A_{f_i\chi}[\pi])=Im(\kappa_v)=H^1_{ur}(G_v, A_{f_i\chi}[\pi])$. 
	\item[(ii)] If $\chi \in \mathfrak{X}$  is ramified at $v$, i.e. $v\in X\cup P_0$, then $H^1_{BK}(G_v, A_{f_2\chi}[\pi])=Im(\kappa_v)=0$
	\item[(iii)]  Let $\chi \in \mathfrak{X}$ be ramified at $v$, i.e. $v\in X\cup P_0. $ If $v\in X$, then $H^1_{BK}(G_v, A_{f_1\chi}[\pi])=Im(\kappa_v)=H^1(G_{v, \chi}, A_{f_1\chi_v}[\pi])$. However, if $v\in P_0$, then $H^1_{BK}(G_v, A_{f_1\chi}[\pi])=Im(\kappa_v)=0.$
		\end{enumerate}
	\end{lemma}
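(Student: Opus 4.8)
The plan is to reduce the whole statement to a computation of $Im(\kappa_v)$, since for every $v\notin S$ (such a $v$ satisfies $v\nmid 2N\infty$, in particular $v\nmid p=2$ and $v\nmid Np$) the identity $H^1_{BK}(G_v,A_{f_i\chi}[\pi])=Im(\kappa_v)$ is already available from the paragraph preceding the statement. I will use throughout that $Im(\kappa_v)=\ker\big(i_v\colon H^1(G_v,A_{f_i\chi}[\pi])\to H^1(G_v,A_{f_i\chi})\big)$ and that $\kappa_v$ factors as an isomorphism $A_{f_i\chi}^{G_v}/\pi\xrightarrow{\sim}Im(\kappa_v)$; that $(A_{f_i\chi}[\pi])^{G_v}=A_{f_i\chi}^{G_v}[\pi]$; and that $V_{f_i\chi}^{G_v}=0$ for $v\nmid Np\infty$ and every quadratic $\chi$ (Ramanujan bound when $\chi$ is unramified, non‑triviality of $\chi_v|_{I_v}$ when $\chi$ is ramified), which forces $A_{f_i\chi}^{G_v}$ to be finite and hence $\#Im(\kappa_v)=\#\big(A_{f_i\chi}^{G_v}/\pi\big)=\#\big(A_{f_i\chi}^{G_v}[\pi]\big)=\#\big((A_{f_i\chi}[\pi])^{G_v}\big)$; in particular $Im(\kappa_v)=0$ as soon as $(A_{f_i\chi}[\pi])^{G_v}=0$. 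A final observation used everywhere: $\chi$ is valued in $\{\pm1\}$, so under hypothesis~\ref{III} (so $\cK_\pi=\Q_2$ and $A_f[\pi]=A_f[2]$) the reduction $\chi\bmod\pi$ is trivial, whence $A_{f_i\chi}[\pi]\cong A_{f_i}[\pi]\cong\EC_i[2]$ as $G_K$-modules, even though the $2$-adic modules $A_{f_i\chi}$ and $A_{f_i}$ genuinely differ.

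For part~(i): if $\chi\in\mathfrak{X}$ is unramified at $v\notin S$, then $A_{f_i\chi}=A_{f_i}\otimes\chi$ is unramified at $v$ (both factors are, as $v\nmid Np$). The restriction to $I_v$ of any class in $Im(\kappa_v)$ lies in $\ker\big(H^1(I_v,A_{f_i\chi}[\pi])\to H^1(I_v,A_{f_i\chi})\big)\cong A_{f_i\chi}^{I_v}/\pi=0$, since $A_{f_i\chi}^{I_v}=A_{f_i\chi}$ is $\pi$-divisible; hence $Im(\kappa_v)\subseteq H^1_{ur}(G_v,A_{f_i\chi}[\pi])$. As $A_{f_i\chi}[\pi]$ is unramified at $v$, one has $\#H^1_{ur}(G_v,A_{f_i\chi}[\pi])=\#H^1(G_v/I_v,A_{f_i\chi}[\pi])=\#\big((A_{f_i\chi}[\pi])^{\Frob_v}\big)=\#\big((A_{f_i\chi}[\pi])^{G_v}\big)=\#Im(\kappa_v)$, so the inclusion is an equality.

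Parts~(ii) and~(iii) with $v\in P_0$ are immediate: by definition $\EC_i(K_v)[2]=0$, i.e. $(A_{f_i}[\pi])^{G_v}=0$, hence $(A_{f_i\chi}[\pi])^{G_v}=0$ and $Im(\kappa_v)=0$. Part~(ii) with $v\in X$: here $v$ is unramified in $K_2$ and $\Frob_v|_{K_2}$ has order $3$, so $G_v$ acts on $A_{f_2}[\pi]\cong\EC_2[2]$ through an order-$3$ element of $\Aut(\EC_2[2])\cong S_3$, which cyclically permutes the three nonzero vectors and fixes none; thus $(A_{f_2\chi}[\pi])^{G_v}=0$ and $Im(\kappa_v)=0$. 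The substantive case is part~(iii) with $v\in X$. Since $K(A_{f_1}[\pi^2])\subseteq F\subseteq T$, since $v$ is unramified in $K(A_{f_1}[\pi^2])$ (as $v\nmid Np$), and since $\Frob_v|_T=1$, the group $G_v$ acts trivially on $A_{f_1}[\pi^2]$. Because $\chi$ is ramified at $v$ while $A_{f_1}$ is unramified, $I_v$ acts on $A_{f_1\chi}$ through the order-$2$ character $\chi_v|_{I_v}$, so $A_{f_1\chi}^{I_v}=A_{f_1\chi}[\pi]$, while $\Frob_v$ acts trivially on $A_{f_1\chi}[\pi]=A_{f_1}[\pi]$; hence $A_{f_1\chi}^{G_v}=A_{f_1\chi}[\pi]$ and $\#Im(\kappa_v)=4$. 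To identify $Im(\kappa_v)$ I would pass to cocycles: for $b\in A_{f_1\chi}^{G_v}=A_{f_1\chi}[\pi]$ choose a lift $\tilde b\in A_{f_1\chi}[\pi^2]$ with $\pi\tilde b=b$; since $G_v$ acts trivially on $A_{f_1}[\pi^2]$ we get $\sigma\tilde b=\chi_v(\sigma)\tilde b$ for $\sigma\in G_v$, so
\[
\kappa_v(b)(\sigma)=\sigma\tilde b-\tilde b=(\chi_v(\sigma)-1)\tilde b,
\]
which vanishes when $\chi_v(\sigma)=1$ and equals $-b$ when $\chi_v(\sigma)=-1$. Thus each $\kappa_v(b)$ is inflated from $G_{v,\chi}=G_v/G_v^\chi$, and as $b$ ranges over $A_{f_1\chi}[\pi]$ these classes fill out $\Hom(G_{v,\chi},A_{f_1\chi}[\pi])=H^1(G_{v,\chi},A_{f_1\chi_v}[\pi])$; therefore $Im(\kappa_v)=H^1(G_{v,\chi},A_{f_1\chi_v}[\pi])$.

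I expect this last case to be the main obstacle. The delicate point is that one needs the full strength of $\Frob_v|_F=1$ — the triviality of the $G_v$-action on $A_{f_1}[\pi^2]$, not merely on $A_{f_1}[\pi]$ — to control the chosen lift $\tilde b$ and so pin $Im(\kappa_v)$ down to the ramified (``transverse'') local condition $H^1(G_{v,\chi},A_{f_1\chi_v}[\pi])$ rather than only bounding its cardinality. This is precisely where the $\pi^2$-level information packaged into the field $T$ (the analogue of Mazur--Rubin's passage from $\EC[2]$ to $\EC[4]$) is indispensable, and it is also the reason the whole set-up runs through the auxiliary curves $\EC_i$ and the set $X$.
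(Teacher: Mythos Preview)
Your proposal is correct and follows essentially the same route as the paper: the reduction to computing $Im(\kappa_v)$ via the finiteness of $A_{f_i\chi}^{G_v}$, the cardinality argument in part~(i), and the vanishing in part~(ii) and the $P_0$-case of~(iii) are handled identically. In the $v\in X$ case of part~(iii) you give a direct cocycle computation of $\kappa_v(b)(\sigma)=(\chi_v(\sigma)-1)\tilde b$ using the lift $\tilde b\in A_{f_1\chi}[\pi^2]$, whereas the paper phrases the same step as a snake-lemma diagram for $G_v^\chi\subset G_v$ and shows the connecting map $H^1(G_{v,\chi},A_{f_1\chi}[\pi])\to H^1(G_{v,\chi},A_{f_1\chi}^{G_v^\chi})$ is zero by exhibiting the coboundary $\sigma y-y$; both arguments hinge on exactly the same input, namely the triviality of the $G_v$-action on $A_{f_1}[\pi^2]$ coming from $\Frob_v|_T=1$.
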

	\noindent$Proof:-$ $(i)$ Let $f\in \{f_1, f_2\}$. We know that for $v\notin S$, $V_{f\chi}^{G_v}=0$ and hence $H^0(G_v/I_v, A_{f\chi}^{I_v})$ is finite. Since $A_{f\chi}^{I_v}= A_{f}$ is divisible, it follows that $H^1(G_v/I_v, A_{f\chi}^{I_v})$, being both divisible and finite, vanishes. Consider the commutative diagram, 
 {\small   \begin{eqnarray}\label{eq17}
				\begin{tikzcd}
					0\arrow[r, ""] & A_{f\chi}^{G_v}/\pi\arrow[r, "\kappa_v"]\arrow[d, ""] &  H^1(G_v, A_{f\chi}[\pi])\arrow[r, ""]\arrow[d, ""] &H^1(G_v, A_{f\chi}) \arrow[d, ""] \\
					0\arrow[r, ""] & 0\arrow[r, ""] & H^1(I_v, A_{f\chi}[\pi])\arrow[r, ""] & H^1(I_v, A_{f\chi}). 
				\end{tikzcd}
    \end{eqnarray} }
    By {applying } {the} snake lemma in \eqref{eq17}, we deduce that $H^1_{BK}(G_v, A_{f\chi}[\pi])=Im(\kappa_v)=H^1_{ur}(G_v, A_{f\chi}[\pi])$.\\ 
	\noindent$(ii)$ Since $V_{f_2\chi}^{G_v}=0$, it follows that $A_{f_2\chi}^{G_v}$ is finite. This implies that $A_{f_2\chi}^{G_v}/\pi$ and $A_{f_2\chi}^{G_v}[\pi]$ have the same cardinality. Since $v\in X\cup P_0$ i.e. $\Frob_v|_{K_2}$ has order $3$, it follows  that $A_{f_2\chi}^{G_v}[\pi]=0$.  Therefore $A_{f_2\chi}^{G_v}/\pi=0$ and hence  $H^1_{BK}(G_v, A_{f_2\chi}[\pi])=Im(\kappa_v)=0$. \\
	\noindent$(iii)$ First we take $v\in X$. Applying {the} snake 
 lemma on the following commutative diagram 
 {\small   \begin{eqnarray}
        \begin{tikzcd}
					0 \arrow[r,] & A_{f_1\chi}^{G_v}/\pi \arrow[r, "\kappa_v"]\arrow[d, "\alpha_v"] & H^1(G_v, A_{f_1\chi}[\pi]) \arrow[r, ""]\arrow[d, ""] & H^1(G_v, A_{f_1\chi})\arrow[d, ""] & \\
					0\arrow[r, ]  & A_{f_1\chi}^{G_{v}^\chi}/\pi \arrow[r, "\kappa_v"] & H^1(G_v^\chi, A_{f_1\chi}[\pi])\arrow[r, ""] & H^1(G_v^\chi, A_{f_1\chi}), 
				\end{tikzcd}
    \end{eqnarray} }
    we  get the  exact sequence, 
\begin{eqnarray}\label{eq18}
    0\rightarrow\ker\alpha_v\rightarrow H^1(G_{v, \chi}, A_{f_1\chi}^{G_v^{\chi}}[\pi])\stackrel{i_v}{\rightarrow} H^1(G_{v, \chi}, A_{f_1\chi}^{G_v^{\chi}}).
\end{eqnarray}
	Let $\sigma$ be the {generator} of  $G_{v, \chi}$, $\xi\in  H^1(G_{v, \chi}, A_{f_1\chi}^{G_v^{\chi}}[\pi])$ be a cocycle and write $\xi(\sigma)=x \in A_{f_1\chi}^{G_v^{\chi}}[\pi]$. Since $\chi$ is trivial modulo $\pi$ and $\Frob_v$  acting trivially on $A_{f_1}[\pi]$, it follows that $H^1(G_{v, \chi}, A_{f_1\chi}^{G_v^{\chi}}[\pi])=\Hom(G_{v, \chi}, A_{f_1\chi}[\pi])$. Since $\Frob_v$ acts trivially on $A_{f_1}[\pi^2]$ as well, we get that $A_{f_1\chi}[\pi^2]=(A_{f_1\chi}[\pi^2])^{G_v^{\chi}}.$ Therefore there exists a $y\in (A_{f_1\chi}[\pi^2])^{G_v^{\chi}}\subset A_{f_1\chi}^{G_v^{\chi}}$ such that $x=\pi y$. Thus $\xi(\sigma)=x=2y=-2y=\chi(\sigma)y-y=\sigma y-y$. This shows that $\xi$ is a coboundary in $H^1(G_{v, \chi}, A_{f_1\chi}^{G_v^{\chi}})$ and hence $i_v$ in \eqref{eq18} is a zero map. Hence $\ker \alpha_v \cong H^1(G_{v, \chi}, A_{f_1\chi}[\pi])$ and  $\dim_{\F_2}\ker\alpha_v=\dim_{\F_2}H^1(G_{v, \chi}, A_{f_1\chi}[\pi])=2$. Since $\ker\alpha_v\hookrightarrow Im(\kappa_v)$ and $\dim_{\F_2}Im(\kappa_v)=2$, we conclude that $Im(\kappa_v)=H^1(G_{v, \chi}, A_{f_1\chi}[\pi]).$ 
    Next, assume that $v\in P_0$ i.e. $A_{f_1\chi}^{G_v}[\pi]=0$. Then proceeding in a similar way as in 
 $(ii)$, we get that $Im(\kappa_v)=H^1_{BK}(G_v, A_{f_1\chi}[\pi])=0.$ \qed 

 \medskip

 \noindent Next we compare the Bloch-Kato $\pi$-Selmer groups of $f_i\chi$ with $\Sel_2(\EC_i^\chi/K)$ for $i=1,2$.    
	\begin{prop}\label{modular, ecc sel com}
	Let $f_1, f_2$ and $\EC_1, \EC_2$ are as above. Then for every $\chi\in \mathfrak{X}$ and $i=1, 2$,  there exists constants $C_i:=C_i(f_i, K)>0$, independent of $\chi$ such that,
		\[|\dim_{\F_2}S_{BK}(A_{f_i\chi}[\pi]/K)-\dim_{\F_2}\Sel_2(\EC_i^\chi/K)|\le C_i.\]
	\end{prop}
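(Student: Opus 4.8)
The plan is to realise both $S_{BK}(A_{f_i\chi}[\pi]/K)$ and $\Sel_2(\EC_i^\chi/K)$ as kernels of restriction maps out of one and the same global cohomology group, and then bound the difference of their $\F_2$-dimensions by a sum, over a finite set of primes independent of $\chi$, of discrepancies between the two families of local conditions. The key preliminary remark is that a quadratic character is trivial modulo $2$: hence $A_{f_i\chi}[\pi]\cong A_{f_i}[\pi]$ and $\EC_i^\chi[2]\cong\EC_i[2]$ as $G_K$-modules, and the $G_K$-isomorphism $A_{f_i}[\pi]\cong\EC_i[2]$ coming from the construction of $\EC_i$ yields a $G_K$-isomorphism $\psi\colon A_{f_i\chi}[\pi]\xrightarrow{\sim}\EC_i^\chi[2]$, the same for all $\chi$. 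Fix the finite set $\Sigma:=S\cup\{v:v\mid cond(\chi)\}$; since $\chi\in\fX$ has ramification confined to $X\cup P_0$ and $X\cup P_0$ is disjoint from $S$, we have $\Sigma\setminus S\subseteq X\cup P_0$, and enlarging the defining set of primes of either Selmer group to $\Sigma$ changes nothing (at the newly added primes both local conditions are the unramified ones, since $\EC_i$ and $f_i$ have good reduction there and $\chi$ is unramified). Thus $S_{BK}(A_{f_i\chi}[\pi]/K)$ and, via $\psi$, $\psi^{-1}\bigl(\Sel_2(\EC_i^\chi/K)\bigr)$ both sit inside $H^1(G_{K,\Sigma},A_{f_i\chi}[\pi])$, cut out by families of local conditions $\{L_v^{\mathrm{m}}\}_{v\in\Sigma}$ and $\{L_v^{\mathrm{e}}\}_{v\in\Sigma}$ respectively.

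First I would check that $L_v^{\mathrm{m}}=\psi^{-1}(L_v^{\mathrm{e}})$ for every $v\in\Sigma\setminus S$, so that only primes of $S$ contribute to the discrepancy. If $\chi$ is unramified at such a $v$, then $v\nmid 2N\infty$, so $V_{f_i\chi}^{G_v}=0$ and, as recorded just before Lemma \ref{Lemma 7}, $L_v^{\mathrm{m}}=Im(\kappa_v)=H^1_{ur}(G_v,A_{f_i\chi}[\pi])$; on the elliptic side $\EC_i^\chi$ has good reduction at $v$, so $L_v^{\mathrm{e}}=H^1_{ur}(G_v,\EC_i^\chi[2])$, and since the unramified subgroup is functorial in the $G_v$-module, $\psi$ identifies the two. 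If $v\in X$, then $\Frob_v$ acts trivially on $A_{f_i}[\pi^2]$ and on $\EC_i[4]$ (because $T\supseteq F$, and $F$ contains both $K(A_{f_i}[\pi^2])$ and $K(\EC_i[4])$); Lemma \ref{Lemma 7}(iii) gives $L_v^{\mathrm{m}}=H^1(G_{v,\chi},A_{f_1\chi}[\pi])$ while Lemma \ref{Lemma 7}(ii) gives $L_v^{\mathrm{m}}=0$ for $f_2\chi$, and the identical local analysis performed for $\EC_i^\chi$ as in \cite[Proposition 4.3]{Myu}, \cite[Theorem 4.4]{Myu} gives $L_v^{\mathrm{e}}=H^1(G_{v,\chi},\EC_1^\chi[2])$ (using that $\EC_1$ has full rational $2$-torsion over $K_v$) and $L_v^{\mathrm{e}}=0$ for $\EC_2^\chi$ (using that $\Frob_v|_{K_2}$ has order $3$, so $\EC_2(K_v)[2]=0$); in both cases $\psi$ matches them. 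If $v\in P_0$, then $A_{f_i\chi}^{G_v}[\pi]=0=\EC_i^\chi(K_v)[2]$, and both conditions are $0$ by Lemma \ref{Lemma 7}(ii),(iii). This is where $\chi\in\fX$ is used in an essential way, namely to confine the ramification of $\chi$ to the primes covered by Lemma \ref{Lemma 7}.

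Then I would invoke the elementary change-of-local-conditions estimate: two Selmer subgroups of $H^1(G_{K,\Sigma},W)$ whose local conditions agree at every $v\notin S$ have $\F_2$-dimensions differing by at most $\sum_{v\in S}\dim_{\F_2}\bigl(L_v^{\mathrm{m}}+\psi^{-1}L_v^{\mathrm{e}}\bigr)\big/\bigl(L_v^{\mathrm{m}}\cap\psi^{-1}L_v^{\mathrm{e}}\bigr)\le\sum_{v\in S}\dim_{\F_2}H^1(G_v,A_{f_i}[\pi])$; this comes from passing through the intersection local condition $\{L_v^{\mathrm{m}}\cap\psi^{-1}L_v^{\mathrm{e}}\}$ (whose Selmer group is contained in both) and noting that the two resulting quotients inject into $\bigoplus_{v\in S}L_v\big/(L_v^{\mathrm{m}}\cap\psi^{-1}L_v^{\mathrm{e}})$. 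The right-hand side is finite and independent of $\chi$, because $S$ is a fixed finite set and $A_{f_i\chi}[\pi]\cong A_{f_i}[\pi]$ as $G_v$-modules for all $\chi$; we take $C_i:=\sum_{v\in S}\dim_{\F_2}H^1(G_v,A_{f_i}[\pi])$.

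The main obstacle is the prime-by-prime matching at $v\in X\cup P_0$: one must verify that the explicit shape of the modular Bloch–Kato local condition produced by Lemma \ref{Lemma 7} coincides, under $\psi$, with the local Selmer condition of the twisted elliptic curve $\EC_i^\chi$, which amounts to re-running the local computations of \cite{Myu} for $\EC_i^\chi$ and confirming that its $2$-torsion arithmetic at these auxiliary primes (full rational $2$-torsion over $K_v$ for $\EC_1$ when $v\in X$, trivial $2$-torsion for $\EC_2$ there, trivial $2$-torsion for both when $v\in P_0$) is exactly what Lemma \ref{Lemma 7} predicts on the modular side. Everything at $v\in S$ is then a soft finiteness bound requiring no further input.
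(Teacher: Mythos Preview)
Your proof is correct and follows essentially the same strategy as the paper: both match the Bloch--Kato and elliptic-curve local conditions at every prime outside the fixed finite set $S=\Omega$ via Lemma~\ref{Lemma 7} (and the analogous Kummer computation for $\EC_i^\chi$), then bound the remaining discrepancy by $\sum_{v\in S}\dim_{\F_2}H^1(G_v,A_{f_i}[\pi])$, which is independent of $\chi$ since $A_{f_i\chi}[\pi]\cong A_{f_i}[\pi]$. The only cosmetic difference is that the paper passes through the \emph{relaxed} Selmer groups $S_{BK}^\Omega$ and $\Sel_2^\Omega$ (local conditions dropped at $\Omega$) and shows these are isomorphic, whereas you pass through the \emph{intersection} of local conditions; both devices give the same bound up to a harmless factor of~$2$.
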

	   
	\noindent$Proof:$ Put $\Omega:=\{v \tx{ prime in }K:v\mid \Delta_1\Delta_2N2\infty\}$ and take $\chi\in \mathfrak{X}$. Define
\begin{small}	\begin{eqnarray}\label{omega f}
		S^\Omega_{BK}(A_{f_1\chi}[\pi]/K):=\ker \Big( H^1(G_K, A_{f_1\chi}[\pi])\rightarrow\bigoplus_{v\notin\Omega}\frac{H^1(G_v, A_{f_1\chi}[\pi])}{H^1_{BK}(G_v, A_{f_1\chi}[\pi])}\Big).
	\end{eqnarray}
    \end{small}
	This gives the following exact sequence, 
\begin{small}	\begin{eqnarray}\label{exctseq1}
		0\rightarrow S_{BK}(A_{f_1\chi}[\pi]/K)\rightarrow S^\Omega_{BK}(A_{f_1\chi}[\pi]/K)\rightarrow\bigoplus_{v\in\Omega}\frac{H^1(G_v, A_{f_1\chi}[\pi])}{H^1_{BK}(G_v, A_{f_1\chi}[\pi])}.
	\end{eqnarray}
     \end{small}
Recall the classical $2$-Selmer group of  an elliptic curve $\EC/K$	is defined by $$\Sel_2(\EC/K):=\ker \Big( H^1(G_K, \EC[2])\rightarrow\bigoplus_{\text{all prime }v}\frac{H^1(G_v, \EC[2])}{\kappa_v\big(\frac{\EC(K_v)}{2\EC(K_v)}\big)}\Big), $$ where \begin{small} $\kappa_v:\frac{\EC(K_v)}{2\EC(K_v)}\rightarrow H^1(G_v, \EC[2])$  \end{small} is induced by the multiplication by $2$ map on $E$. Next, we define
 \begin{small}   \begin{eqnarray}\label{omega E}
		 \Sel_2^\Omega(\EC_1^\chi/K):=\ker \Big( H^1(G_K, \EC_1^\chi[2])\rightarrow\bigoplus_{v\notin\Omega}\frac{H^1(G_v, \EC_1^\chi[2])}{\kappa_v\big(\frac{\EC_1^\chi(K_v)}{2\EC_1^\chi(K_v)}\big)}\Big).
        \end{eqnarray}
         \end{small}
         Then we have the following exact sequence 
\begin{small}         \begin{eqnarray}
          0\rightarrow \Sel_2(\EC_1^\chi/K)\rightarrow \Sel_2^\Omega(\EC_1^\chi/K)\rightarrow\bigoplus_{v\in\Omega}\frac{H^1(G_v, \EC_1^\chi[2])}{\kappa_v\big(\frac{\EC_1^\chi(K_v)}{2\EC_1^\chi(K_v)}\big)}   
         \end{eqnarray}
          \end{small}
{Define $\frac{1}{2}C_1:=\underset{v\in\Omega}{\Sigma}\dim_{\F_2}H^1(G_v, A_{f_1}[\pi])$ and since $A_{f_1}[\pi]\cong \EC_1[2]$ as $G_K$-module, it follows that $\frac{1}{2}C_1=\underset{v\in\Omega}{\Sigma}\dim_{\F_2}H^1(G_v, \EC_1[2])$. Since $\chi\equiv 1\pmod{\pi}$, we also have $\frac{1}{2}C_1=\underset{v\in\Omega}{\Sigma}\dim_{\F_2}H^1(G_v, A_{f_1\chi}[\pi])$ for any quadratic character $\chi$ of $K$.} For any prime $v\notin \Omega$, $(V_p\EC_1^\chi)^ {G_v}=0$.  
{Following the proof of lemma \ref{Lemma 7} $(i),(iii)$, for $\chi\in\mathfrak{X}$ and $v\notin\Omega$, we obtain  $\kappa_v\big(\frac{\EC_1^\chi(K_v)}{p\EC_1^\chi(K_v)}\big)=H^1_{ur}(G_v, \EC_{1}^\chi[p])$, if $\chi$ is unramified at $v$ and $\kappa_v\big(\frac{\EC_1^\chi(K_v)}{p\EC_1^\chi(K_v)}\big)=H^1(G_{v, \chi}, \EC_{1}^\chi[p])$, if $\chi$ is ramified at $v$}. Following these observations together with the fact that $A_{f_1\chi}[\pi]\cong\EC_1^\chi[2]$ as $G_K$-module, we deduce from lemma \ref{Lemma 7} $(i)$, $(iii)$ that $S^\Omega_{BK}(A_{f_1\chi}[\pi]/K)\cong\Sel_2^\Omega(\EC_1^\chi/K)$. It is now plain from \eqref{omega f}, \eqref{omega E} and the definition of $C_1$ that  
	$|\dim_{\F_2}S_{BK}(A_{f_1\chi}[\pi]/K)-\dim_{\F_2}\Sel_2(\EC_1^\chi/K)|\le C_1. $
	
The corresponding result for $f_2$ and $\EC_2$ can be deduced by defining $\frac{1}{2}C_2:=\underset{v\in\Omega}{\Sigma}\dim_{\F_2}H^1(G_v, A_{f_2}[\pi])$ and following a similar  argument as above, using lemma \ref{Lemma 7} $(i)$, $(ii)$. \qed
	
Now we are ready to complete the proof of the proposition \ref{BK-near-mainthm}. 	
	\begin{proof}[Proof of proposition \ref{BK-near-mainthm}] Let $f_1, f_2\in S_k(\Gamma_0(Np^t))$ be two newforms which satisfy all the conditions stated in proposition \ref{BK-near-mainthm}.
		 By given hypothesis  $f_1$ and $f_2$ are Bloch-Kato $\pi$-Selmer near-companion over $K$, i.e. for every quadratic character $\chi$ of $K$ and for every $j$ with $0\le j\le k-2$, there exist a constant $C>0$ independent of $\chi$, such that 
		$|\dim_{\F_2}S_{BK}(A_{f_1\chi(-j)}[\pi]/K)-\dim_{\F_2}S_{BK}(A_{f_2\chi(-j)}[\pi]/K)|<C.$ 
        
		In particular for $j=0$, we have that
		\begin{align*}
			\abs{\dim_{\F_2}\Sel_2(\EC_1^\chi/K)-\dim_{\F_2}\Sel_2(\EC_2^\chi/K)}\le \abs{\dim_{\F_2}S_{BK}(A_{f_1\chi}[\pi]/K)-\dim_{\F_2}\Sel_2(\EC_1^\chi/K)}+\\\abs{\dim_{\F_2}S_{BK}(A_{f_1\chi}[\pi]/K)-\dim_{\F_2}S_{BK}(A_{f_2\chi}[\pi]/K)}+\abs{\dim_{\F_2}S_{BK}(A_{f_2\chi}[\pi]/K)-\dim_{\F_2}\Sel_2(\EC_2^\chi/K)}.
		\end{align*}
	Then  from lemma \ref{modular, ecc sel com}, we get 
        \begin{eqnarray}\label{eq21}
          \abs{\dim_{\F_2}\Sel_2(\EC_1^\chi/K)-\dim_{\F_2}\Sel_2(\EC_2^\chi/K)}\le C_1+C+C_2,\quad \text{ for every $\chi\in \mathfrak{X}$.}  
        \end{eqnarray}
	
	Now, if possible, assume that $A_{f_1}[\pi]\ncong A_{f_2}[\pi]$ as $G_K$-module. Then  $K_1 \neq K_2$  and the equation \eqref{eq21} 
contradicts lemma \ref{ecc sel com}. This completes the proof of proposition \ref{BK-near-mainthm}. 
	\end{proof}
	
	\begin{remark}\label{rmrk 2}
    
	\textnormal{	Note that for any $q\in X$, there is a quadratic character  $\chi\in \mathfrak X$ such that $\dim_{\F_2}\Sel_2(\EC_1^\chi/K) = \dim_{\F_2}\Sel_2(\EC_1/K) + 2$, and $\dim_{\F_2}\Sel_2(\EC_2^\chi/K)=\dim_{\F_2}\Sel_2(\EC_2/K)$ (see \cite[Theorems 4.4 \& 5.2]{Myu}). Inductively, twisting by product of such quadratic characters $\chi\in \mathfrak{X}$, we deduce that $\dim_{\F_2}\Sel_2(\EC_1^\chi/K)-\dim_{\F_2}\Sel_2(\EC_1/K)$ can be made arbitrarily large while $\dim_{\F_2}\Sel_2(\EC_2^\chi/K)=\dim_{\F_2}\Sel_2(\EC_2/K)$ remains unchanged. By lemma \ref{modular, ecc sel com}, we have that for $i=1, 2$ and for $\chi\in\mathfrak{X}$, the difference between $\dim_{\F_2}S_{BK}(A_{f_i\chi}[\pi]/K)$ and $\dim_{\F_2}\Sel_2(E_i^\chi/K)$ remains bounded, independent of $\chi$. Therefore  $\dim_{\F_2}S_{BK}(A_{f_1\chi}[\pi]/K)-\dim_{\F_2}S_{BK}(A_{f_2\chi}[\pi]/K)$ can be made arbitrarily large as $\chi$ varies over $\mathfrak{X}$. }   
	\end{remark}
Using (the proof of) proposition \ref{BK-near-mainthm}, we can now complete the proof of theorem \ref{Gr nr com. converse}.	
	\begin{proof}[Proof of Theorem \ref{Gr nr com. converse}]
     
		Without any loss of generality, assume that $\bar{\rho}_{f_2}$ is an irreducible $G_K$-module. We prove the contrapositive statement:
		 suppose that $A_{f_1}[\pi]\ncong A_{f_2}[\pi]$ as $G_K$-modules. Then $K_1\ne K_2$ and  using proposition \ref{ecc sel com}, proposition \ref{modular, ecc sel com} and remark \ref{rmrk 2}, we get that for every positive integer $d$,  there exist infinitely many $\chi\in \mathfrak{X}$ such that,  \begin{align}\label{eq6}
			\dim_{\F_2}S_{BK}(A_{f_1\chi}[\pi]/K)-\dim_{\F_2}S_{BK}(A_{f_2\chi}[\pi]/K)>d.
		\end{align}

        We claim that: for every $v\notin S$ and $\chi\in\mathfrak{X}$, $H^1_{Gr}(G_v, A_{f_2\chi}[\pi])=H^1_{BK}(G_v, A_{f_2\chi}[\pi]).$ Let us first establish this claim. {As $\chi\in\mathfrak{X}$, if $\chi$ is ramified at $v$ then $v\in X\cup P_0$ and $\Frob_v{_{|_{K_2}}}$ has order $3$}. This gives us $H^1(G_v/I_v, A_{f_2\chi}^{I_v})=0$ and hence $Im(\kappa_v)=H^1_{Gr}(G_v, A_{f_2\chi}[\pi])=H^1_{BK}(G_v, A_{f_2\chi}[\pi]).$ On the other hand, if $\chi$ is unramified at $v$, then it follows that $V_{f_2\chi}^{G_v}=0$ and consequently, $H^1(G_v/I_v, A_{f_2\chi})=0$. So again, we have $H^1_{Gr}(G_v, A_{f_2\chi}[\pi])=H^1_{BK}(G_v, A_{f_2\chi}[\pi]).$ This complete the proof of the claim. 
        
     Now,  it is known  that $S_{BK}(A_{f_i\chi}[\pi]/K)$ is a subgroup of  $S_{Gr}(A_{f_i\chi}[\pi]/K)$ (see \cite[Theorem 3]{flach90}). Putting these things  together with the fact that $H^1_{BK}(G_v, A_{f_2\chi}[\pi])\subset H^1_{Gr}(G_v, A_{f_2\chi}[\pi])$ for every prime $v$ of $K$,   we get the following exact sequence, 
		\begin{eqnarray}\label{eq26}
		    0\rightarrow S_{BK}(A_{f_2\chi}[\pi]/K)\rightarrow S_{Gr}(A_{f_2\chi}[\pi]/K)\rightarrow\underset{v\in S}{\bigoplus} \frac{H_{Gr}(G_v, A_{f_2\chi}[\pi])}{H_{BK}(G_v, A_{f_2\chi}[\pi])}.
		\end{eqnarray}
        From \eqref{eq26}, we obtain that $\dim_{\F_2}S_{Gr}(A_{f_2\chi}[\pi]/K)-\dim_{\F_2}S_{BK}(A_{f_2\chi}[\pi]/K)\le C^\prime$, where\\ $C^\prime:=\sum_{v\in S}\dim_{\F_2}H^1(G_v, A_{f_2\chi}[\pi])$  is independent of $\chi\in \mathfrak{X}$. Therefore for every $\chi\in \mathfrak{X}$, we have
		\begin{align*}
			\dim_{\F_2}S_{Gr}(A_{f_1\chi}[\pi]/K)-\dim_{\F_2}S_{Gr}(A_{f_2\chi}[\pi]/K)\ge \dim_{\F_2}S_{BK}(A_{f_1\chi}[\pi]/K) -\dim_{\F_2}S_{BK}(A_{f_2\chi}[\pi]/K)-C^\prime
		\end{align*}
		Now using \eqref{eq6}, we deduce that for every positive integer $d$, there are infinitely many $\chi\in\mathfrak{X}$, with  $$\dim_{\F_2}S_{Gr}(A_{f_1\chi}[\pi]/K)-\dim_{\F_2}S_{Gr}(A_{f_2\chi}[\pi]/K)\ge d-C'.$$ This shows that $f_1$ and $f_2$ are not Greenberg $\pi$-Selmer near-companion forms over $K$.  
	\end{proof}

	\bibliographystyle{unsrt}
	\bibliography{2_sel_com}

\begin{thebibliography}{10}

\bibitem{mr}
Barry Mazur and Karl Rubin.
\newblock Selmer companion curves.
\newblock {\em Trans. Amer. Math. Soc.}, 367(1):401--421, 2015.

\bibitem{JSM19}
Somnath Jha, Dipramit Majumdar, and Sudhanshu Shekhar.
\newblock {$p^r$}-{S}elmer companion modular forms.
\newblock {\em Ann. Inst. Fourier (Grenoble)}, 71(1):53--87, 2021.

\bibitem{llzant}
Antonio Lei, David Loeffler, and Sarah~Livia Zerbes.
\newblock Coleman maps and the {$p$}-adic regulator.
\newblock {\em Algebra Number Theory}, 5(8):1095--1131, 2011.

\bibitem{epw}
Matthew Emerton, Robert Pollack, and Tom Weston.
\newblock Variation of {I}wasawa invariants in {H}ida families.
\newblock {\em Invent. Math.}, 163(3):523--580, 2006.

\bibitem{Hida}
Haruzo Hida.
\newblock {\em Modular forms and {G}alois cohomology}, volume~69 of {\em
  Cambridge Studies in Advanced Mathematics}.
\newblock Cambridge University Press, Cambridge, 2000.

\bibitem{Myu}
Myungjun Yu.
\newblock 2-{S}elmer near-companion curves.
\newblock {\em Trans. Amer. Math. Soc.}, 372(1):425--440, 2019.

\bibitem{fon}
J.M. Fontaine.
\newblock Le corps des p\'eriodes $p$-adic.
\newblock In {\em ``P\'eriodes $p$-Adiques," S\'eminaire de Bures}, pages
  59--111. Ast\'erisque, 1995.

\bibitem{li}
Ron Livn\'e.
\newblock On the conductors of mod {$l$} {G}alois representations coming from
  modular forms.
\newblock {\em J. Number Theory}, 31(2):133--141, 1989.

\bibitem{stein}
William Stein.
\newblock {\em Modular forms, a computational approach}, volume~79 of {\em
  Graduate Studies in Mathematics}.
\newblock American Mathematical Society, Providence, RI, 2007.
\newblock With an appendix by Paul E. Gunnells.

\bibitem{DDT}
R.~Taylor H.~Darmon, F.~Diamond.
\newblock Fermat's last theorem, elliptic curves, modular forms \& fermat's
  last theorem.
\newblock {\em Int. Press}, 1997.

\bibitem{flach90}
Matthias Flach.
\newblock A generalisation of the {C}assels-{T}ate pairing.
\newblock {\em J. Reine Angew. Math.}, 412:113--127, 1990.

\end{thebibliography}

\end{document}